\DeclareMathOperator{\Span}{span}
 \DeclareMathOperator{\sign}{sign}
 \newcommand{\IM}{\mathrm{Im}\,}
 \newcommand{\RE}{\mathrm{Re}\,}
\newtheorem{theorem}{Theorem}[section]
 \newtheorem{proposition}[theorem]{Proposition}
 \theoremstyle{definition}
 \newtheorem{definition}[theorem]{Definition}
 \theoremstyle{remark}
 \newtheorem{remark}[theorem]{Remark}
 \numberwithin{equation}{section}
\newcommand{\R}{\mathbb R}
\begin{document}
\maketitle

\centerline{$^1$ Department of Mathematics,
IME-USP}
 \centerline{Rua do Mat\~ao 1010, Cidade Universit\'aria, CEP 05508-090,
 S\~ao Paulo, SP (Brazil)}
 \centerline{\tt angulo@ime.usp.br}
 \centerline{ $^2$ Instituto de Investigaciones en Matem\'aticas Aplicadas
   y en Sistemas,}
 \centerline{Universidad Nacional Aut\'{o}noma de M\'{e}xico,  Circuito Escolar s/n,}
 \centerline{Ciudad Universitaria, C.P. 04510 Cd. de M\'{e}xico (Mexico)}
 \centerline{\tt  plaza@mym.iimas.unam.mx}

\begin{abstract}
%
The sine-Gordon equation on a metric graph with a structure represented by a $\mathcal Y$-junction, is considered. The model is endowed with boundary conditions at the graph-vertex of $\delta'$-interaction type, expressing continuity of the derivatives of the wave functions plus a Kirchhoff-type rule for the self-induced magnetic flux. It is shown that particular stationary, kink and kink/anti-kink soliton profile solutions to the model are linearly (and nonlinearly) unstable. To that end, a recently developed linear instability criterion for evolution models on metric graphs by Angulo and Cavalcante (2020), which provides the sufficient conditions on the linearized operator around the wave to have a pair of real positive/negative eigenvalues, is applied. This leads to the spectral study to the linearize operator and of its Morse index. The analysis is based on analytic perturbation theory, Sturm-Liouville oscillation results and the extension theory of symmetric operators. The  methods presented in this manuscript   have prospect for the study of the dynamic of solutions for the sine-Gordon model on metric graphs  with  finite bounds  or on metric tree graphs and/or loop graphs.
\vskip0.1in

\end{abstract}

\textbf{Mathematics  Subject  Classification (2010)}. Primary
35Q51, 35Q53, 35J61; Secondary 47E05.\\

\textbf{Key  words}.  sine-Gordon model, metric graphs, tail and bump solutions, $\delta'$-type interaction, perturbation theory, extension theory, instability.


\section{Introduction}

The one-dimensional sine-Gordon equation in laboratory coordinates,
\begin{equation}
\label{sine-G}
u_{tt} - c^2 u_{xx} + \sin u = 0,
\end{equation}
where $c > 0$ is a constant and $x \in \R$, $t > 0$, is ubiquitous in a great variety of physical and biological models. For example, it has been used to describe the magnetic flux in a long Josephson line in superconductor theory \cite{BEMS,BaPa82,SCR,Jsph65}, mechanical oscillations of a nonlinear pendulum \cite{Dra83,Knob00} and the dynamics of a crystal lattice near a dislocation \cite{FreKo}. Recently, soliton solutions to equation \eqref{sine-G} have been used as simplified models of scalar gravitational fields in general relativity theory \cite{CFMT19,FCT18} and of oscillations describing the dynamics of DNA chains \cite{DerGa11,IvIv13} in the context of the \emph{solitons in DNA hypothesis} \cite{EKHKL80}. In addition to its wide applicability, the sine-Gordon equation \eqref{sine-G} underlies many remarkable mathematical features such as a Hamiltonian structure \cite{TaFa76}, complete integrability \cite{AKNS73,AKNS74} and the existence of localized solutions (solitons) \cite{SCM,Sco03}. 

In a recent contribution \cite{AnPl-delta}, we performed the first rigorous analytical study of the stability properties of stationary soliton solutions to the sine-Gordon equation \eqref{sine-G} \emph{posed on a $\mathcal{Y}$-junction metric graph}. A metric graph is a network-shaped structure of edges which are assigned a length and connected at vertices according to boundary conditions which determine the dynamics on the network. A $\mathcal{Y}$-junction is a particular graph with three edges connected through one single vertex. There exist two main types of $\mathcal{Y}$-junctions. A $\mathcal{Y}$-junction of the first type (or type I) consists of one incoming (or parent) edge, $E_1 = (-\infty,0)$, meeting at one vertex located at the origin, $\nu = 0$, with other two outgoing (children) edges, $E_j = (0,\infty)$, $j = 2,3$. The second type (or $\mathcal{Y}$-junction of type II) is constituted by three identical edges of the form $E_j = (0,\infty)$, $1 \leqq j \leqq 3$; they are often referred to as \emph{tricrystal junctions or star graph}. See Figure \ref{figYjunction} for an illustration. Recently, junctions of type I have been used in the description of unidirectional fluid flow models (see, for example, \cite{AngCav, AngCav2, BoCa08}) or in the modeling of Josephson superconductor junctions \cite{Grunn93,Susa19}, whereas $\mathcal{Y}$-junctions of  red type II appear in the study of the nonlinear Schr\"odinger equation on graphs (see, for example,  \cite{AngGol18a,AngGol18b} and the reference cited therein)  or in the description of Josephson vortices in crystal's theory (see, \cite{SvG05,KCK00}). Recently, in \cite{Sabi18} has been studied  the stationary solutions for the sine-Gordon on star graph with a $\mathcal{Y}$ configuration and with finite bounds, namely, $E_j = (0,L_j)$, $1 \leqq j \leqq 3$, or on metric tree graphs consisting of finite bonds (see Figure \ref{figtree}).  
\begin{figure}[t]
\begin{center}
\subfigure[$\mathcal{Y} = (-\infty,0) \cup (0,\infty) \cup (0,\infty)$]{\label{figYtipoI}\includegraphics[scale=.3, clip=true]{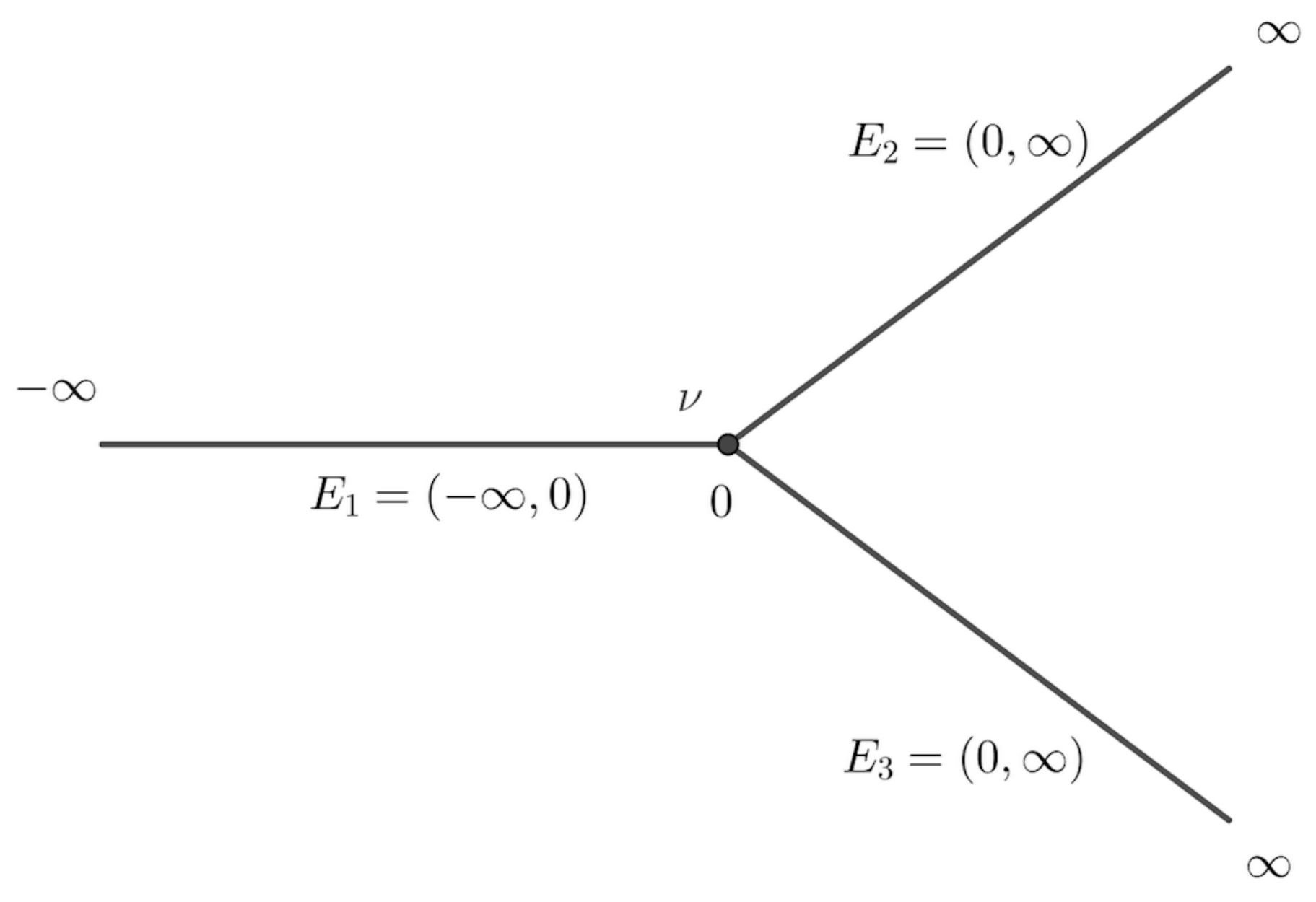}}
\subfigure[$\mathcal{Y} = (0,\infty) \cup (0,\infty) \cup (0,\infty)$]{\label{figYtipoII}\includegraphics[scale=.3, clip=true]{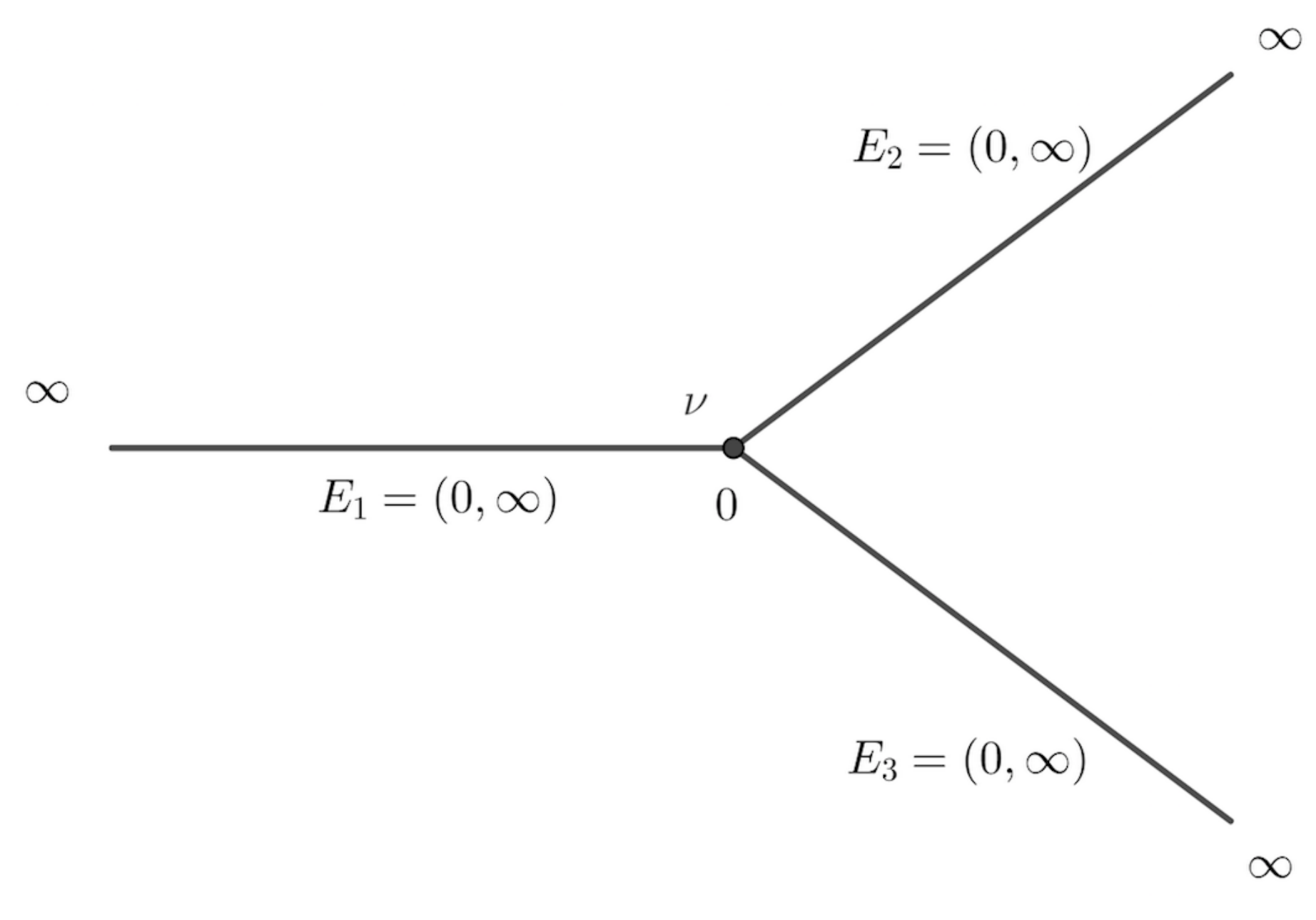}}
\end{center}
\caption{\small{Panel (a) shows a $\mathcal{Y}$-junction of the first type with $E_1 = (-\infty,0)$ and $E_j = (0,\infty)$, $j=2,3$, whereas panel (b) shows a $\mathcal{Y}$-junction of the second type (star graph or tricrystal junction) with $E_j = (0,\infty)$, $1 \leqq j \leqq 3$.}}\label{figYjunction}
\end{figure}

In the aforementioned previous paper \cite{AnPl-delta}, we studied the sine-Gordon equation \eqref{sine-G} posed on a $\mathcal{Y}$-junction of either type, and in the case where the dynamics is determined by boundary conditions at the vertex of $\delta$-type. Interactions of $\delta$-type refer to continuity of the wave functions at the vertex together with a balance flux relation for the derivatives of the wave functions (see \cite{AnPl-delta,Caput14,DuCa18,Sabi18} and the references cited therein). Motivated by physical applications, the purpose of the present paper is to study the stability of particular stationary solutions to the sine-Gordon equation posed on a $\mathcal{Y}$-graph endowed with \emph{interactions at the vertex of $\delta'$-type}. Indeed, in the context of superconductor theory, the sine-Gordon equation on a metric graph arises as a model for coupling of two or more Josephson junctions in a network. A Josephson junction is a quantum mechanical structure that is made by two superconducting electrodes separated by a barrier (the junction), thin enough to allow coupling of the wave functions of electrons for the two superconductors \cite{Jsph65}. After appropriate normalizations, it can be shown that the phase difference $u$ (also known as order parameter) of the two wave functions satisfies the sine-Gordon equation \eqref{sine-G} \cite{Jsph65,BEMS}. Coupling three junctions at one common vertex, the so called tricrystal junction, can be regarded (and fabricated) as a probe of the order parameter symmetry of high temperature superconductors (cf. \cite{Tsuei94,Tsuei00}). Physically coupling three otherwise independent long Josephson junctions, $\mathcal{Y} = \cup_{j=1}^3 E_j$, together at one common vertex, was first proposed by Nakajima \emph{et al.} \cite{NakO76,NakO78} as a prototype for logic circuits. In this framework, the sine-Gordon model in a $\mathcal{Y}$-junction is
\begin{equation}
\label{sgY}
\partial_t^2 u_j - c_j^2 \partial_x^2 u_j + \sin u_j= 0, \qquad x \in E_j, \;\,  t > 0, \;\, j=1,2,3,
\end{equation}
where $u_j$ denotes the phase difference for the magnetic flux on each edge, $E_j$. Since the surface current density should be the same in all three films at the vertex, Nakajima \emph{et al.} \cite{NakO76,NakO78} (see also \cite{Grunn93,KCK00}) impose the condition
\begin{equation}
\label{bcmagnet}
c_1 \partial_x {u_1}_{|x=0} = c_2 \partial_x {u_2}_{|x=0} = c_3 \partial_x {u_3}_{|x=0},
\end{equation}
expressing that the magnetic field, which is proportional to the derivative of phase difference, should be continuous at the intersection. Moreover, the magnetic flux computed along an infinitesimal small contour encircling the origin (vertex) must vanish, that is, the total change of the gauge invariant phase difference must be zero \cite{KCK00,Susa19}. This leads to the Kirchhoff-type of boundary condition
\begin{equation}
\label{Kirchhoffbc}
\begin{aligned}
-c_1  u_1(0-) + \sum_{j=2}^3 c_j  u_j(0+) &= 0, & & \text{($\mathcal{Y}$-junction of type I),}\\
\sum_{j=1}^3 c_j  u_j(0+) &= 0, & & \text{($\mathcal{Y}$-junction of type II).}
\end{aligned}
\end{equation}
The interaction conditions \eqref{bcmagnet} and \eqref{Kirchhoffbc} are known as boundary conditions of $\delta'$-type: they express continuity of the fluxes (derivatives) plus a Kirchhoff-type rule for the self-induced magnetic flux. 

The first study of static soliton-type (kink or anti-kink) solutions in tricrystal junctions under $\delta'$-conditions is due to Grunnet-Jepsen \emph{et al.} \cite{Grunn93}, and later pursued by other authors (for an abridged list of references, see \cite{KCK00,SvG05}). A recent work \cite{Susa19} considers solutions of \emph{breather} type as well. Up to our knowledge, however, there are no rigorous analytical studies of the stability of stationary solutions to the sine-Gordon model on a graph with boundary conditions of $\delta'$-interaction type available in the literature.  Our principal interest here will be study the stability properties for kink or kink/anti-kink solutions (see Figures  \ref{figKsolitons} and \ref{figAKsolitons}) for the sine-Gordon model on a $\mathcal{Y}$-junction of type I under $\delta'$-conditions at the vertex. Indeed, we show that they are linearly and nonlinearly unstable profiles (see subsection 1.2.3 and Theorem \ref{2main}). The stability studied of static configurations for the sine-Gordon model in the case of tricrystal junctions with infinite or finite bounds,  or on metric tree graphs (see Figure \ref{figtree}) and/or loop graphs, it will be  the focus of a future work.  We call the attention that the study of these static configurations is an important issue from both the mathematical and the physical viewpoints (see \cite{Sabi18}). 

In the stability analysis, it is customary to linearize the equation around the profile solution and to obtain useful information from the spectral properties of the linearized operator posed on an appropriate function space. Upon linearization of the sine-Gordon equation \eqref{sine-G} around a stationary soliton solution, we end up with a Schr\"odinger type operator with a bounded potential (see the form of the operator \eqref{trav23} below) that can be appropriately defined on a graph. Therefore, we adopt a quantum-graph approach \cite{AngCav2, Berko17, BlaExn08}  in order to make precise the boundary conditions that provide self-adjoint extensions on the graph of the symmetric Schr\"odinger type operator and that actually determine the physical model.

\begin{figure}[h]
\centering
\includegraphics[angle=0, scale=0.4]{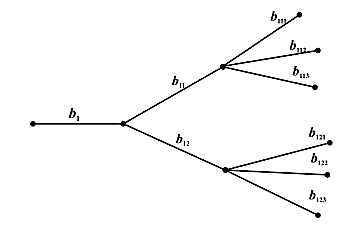}
\caption{\small{A metric tree graph with finite bounds determined by $b_1$ (father), $b_{11}, b_{12}$ (sons), $b_{111}, b_{112}, b_{113}$, $b_{121}, b_{122}, b_{123}$ (grandsons).}}
\label{figtree}
\end{figure}

\subsection{Boundary conditions of $\delta'$-interaction type}

In the case of a $\mathcal{Y}$-junction of type I (it which will be the focus of our study), the transition conditions of $\delta'$-type have the form
\begin{equation}
\label{bcI}
c_1 u'_1(0-) = c_2 u'_2(0+) = c_3 u'_3(0+),\quad
-c_1  u_1(0-) + \sum_{j=2}^3 c_j  u_j(0+) = \lambda c_1 u'_1(0-),
\end{equation}
where $\lambda \in \R$ is a parameter. The reason for considering this boundary condition is the fact that all the self-adjoint extensions of the  following symmetric operator 
\[
\mathcal{H} \mathbf{u} = \left\{ \Big( -c_j^2 \frac{d^2}{dx^2} \Big) u_j \right\}_{j=1}^3, \qquad \mathbf{u} = (u_j)_{j=1}^3,
\]
posed on a $\mathcal{Y}$-junction with the domain (see Proposition \ref{2M} in Appendix \S A)
\begin{equation*}
D(\mathcal{H})= \Big \{(u_j)_{j=1}^3\in H^2(\mathcal{Y}):
c_1u_1'(0-)=c_2u'_2(0+)=c_3u_3'(0+)=0,\;\; \sum_{j=2}^3 c_ju_j(0+)-c_1u_1(0-)=0 \Big \},
\end{equation*}
red they are defined by the boundary  conditions \eqref{bcI} which are compatible with the flux continuity condition \eqref{bcmagnet} ( for convenience of the reader, we provide a direct proof of this fact in Appendix \S \ref{secAppET}). Notice that we recover the Kirchhoff boundary condition \eqref{Kirchhoffbc} when $\lambda = 0$. These conditions depend upon the parameter $\lambda$, which ranges along the whole real line. We note that the value $\lambda \in \R$ is part of the parameters that determine the physical model (such as the speeds $c_j$, for example). Instead of adopting \emph{ad hoc} boundary conditions, we consider a parametrized family of transition rules covering a wide range of applications and which, for the particular value $\lambda = 0$, include the Kirchhoff condition \eqref{Kirchhoffbc} previously studied in the literature. Our goal is to study particular solutions to the sine-Gordon equation on the graph, subjected to boundary conditions \eqref{bcI} and motivated by the well-known \emph{kink} (or \emph{anti-kink}) solutions to the sine-Gordon equation on the real line (also referred to as topological solitons \cite{Dra83,Sco03,SCM}).

\subsection{Main results}


In this paper we consider the sine-Gordon equation \eqref{sine-G} on a metric graph with the shape of a $\mathcal{Y}$-junction with three semi-infinite edges and joined by a single vertex $\nu = 0$. In the sequel we assume that the $\mathcal{Y}$-junction is  of type I, where $E_1 = (-\infty,0)$ and $E_j = (0,\infty)$, $j = 2,3$.  The sine-Gordon model on a $\mathcal{Y}$-junction reads
\begin{equation}
\label{sg1}
\partial_t^2 u_j - c_j^2 \partial_x^2 u_j + \sin u_j= 0, \qquad x \in E_j, \;\,  t > 0, \;\, 1 \leqq j \leqq 3,
\end{equation}
where $\mathbf{u} = (u)_{j=1}^3$, $u_j = u_j(x,t)$. It is assumed that the characteristic speed on each edge $E_j$ is constant and positive, $c_j > 0$, without loss of generality. Clearly, one can recast the equations in \eqref{sg1} as a first order system that reads
\begin{equation}
\label{sg2}
\begin{cases}
\partial_t u_j = v_j\\
\partial_t v_j =c_j^2 \partial_x^2 u_j - \sin u_j,
\end{cases}
\qquad x \in E_j, \;\,  t > 0, \;\, 1 \leqq j \leqq 3.
\end{equation}
Moreover, one can always rewrite system \eqref{sg2} in the vectorial form 
  \begin{equation}\label{stat1}
  \bold w_t=JE\bold w +F(\bold w)
 \end{equation} 
  where $\bold w=(u, v)^\top$, with $u=(u_1, u_2, u_3)^\top$, $v=(v_1, v_2, v_3)^\top$, $u_j, v_j: E_j \to \mathbb R$, $1 \leqq j \leqq 3$,
 \begin{equation}\label{stat2} 
J=\left(\begin{array}{cc} 0 & I_3 \\ -I_3  & 0 \end{array}\right),\quad E=\left(\begin{array}{cc} \mathcal F& 0 \\0 & I_3\end{array}\right), \quad 
F(\bold w)=\left(\begin{array}{cc}  0\\  0   \\  0   \\  -\sin (u_1)   \\  -\sin (u_2)   \\  -\sin (u_3)  \end{array}\right)
\end{equation} 
 and where $I_3$ denotes the identity matrix of order $3$ and $ \mathcal F$ is the diagonal-matrix linear operator
 \begin{equation} 
 \label{Lapla}
 \mathcal{F}=\Big (\Big(-c_j^2\frac{d^2}{dx^2}
\Big)\delta_{j,k} \Big ),\quad\l1\leqq j, k\leqq 3.
   \end{equation}

For the $\mathcal{Y}$-junction of type I, we consider red the following family of self-adjoint  operators $\mathcal F_\lambda\equiv\mathcal F$ defined on the $\delta'$-interaction domain (see Proposition \ref{2M} in Appendix \S A)
\begin{equation}
\label{I3trav23}
D_\mathrm{I}(\mathcal{F}_\lambda) := \Big \{(v_j)_{j=1}^3\in H^2(\mathcal{Y}):
c_1v'_1(0-)=c_2v'_2(0+)=c_3v'_3(0+),\;\; \sum\limits_{j=2}^3 c_jv_j(0+)-c_1v_1(0-)=\lambda c_1v'_1(0-) \Big \},
\end{equation}
with $\lambda\in \mathbb R$.  We note from the later that the natural space to develop a local well-posedness theory for \eqref{stat1} is $H^1(\mathcal{Y}) \times L^2(\mathcal{Y})$ red such as will be performance in subsection 3.1.1 below).
%


\subsubsection{Stationary solutions on a $\mathcal{Y}$-junction of type I}
We are interested in the dynamics generated by the flow of the sine-Gordon model \eqref{sg2} around solutions of stationary type,
\[
u_j (x,t) = \phi_j(x), \qquad v_j(x,t) = 0,
\]
for all $j = 1,2,3$, and $x \in E_j$, $t > 0$, where each of the profile functions $\phi_j$ satisfies the equation
\begin{equation}
\label{trav21}
-c_j^2 \phi''_j + \sin \phi_j=0, 
\end{equation}
on each edge $E_j$ and for all $j$, as well as the boundary conditions in  \eqref{bcI} at the vertex $\nu = 0$, more precisely, 
\begin{equation}
\label{bcIfam}
c_1 \phi'_1(0-) = c_2 \phi'_2(0+) = c_3 \phi'_3(0+),\quad
-c_1  \phi_1(0-) + \sum_{j=2}^3 c_j  \phi_j(0+) = \lambda c_1 \phi'_1(0-),
\end{equation}
for some $\lambda \in \R$. Motivated by the well-known kink-type soliton profile solutions to the sine-Gordon equation on the full real line \cite{Dra83,SCM}, we consider initially the particular family of profiles having the form
\begin{equation}
\label{trav22}
\begin{cases}
\phi_1(x) = 4 \arctan \big( e^{(x-a_1)/c_1}\big), & x \in (-\infty,0),\\
\phi_j(x) = 4 \arctan \big( e^{-(x-a_j)/c_j}\big), & x \in (0,\infty), \,\; j=2,3,\\
\end{cases}
\end{equation}
where each $a_j$ is a constant determined by the boundary conditions \eqref{bcIfam} (see Figure \ref{figKsolitons} below). Notice as well that this family of stationary solutions \eqref{trav22} satisfies
\begin{equation} 
\label{bcinfty}
\phi_1(-\infty) = \phi_j(+\infty) = 0, \qquad j = 2,3
\end{equation}
(in other words, the constant of integration when solving \eqref{trav21} to arrive at \eqref{trav22} is zero on each edge $E_j$). This decaying behavior at $\pm \infty$, for instance, guarantees that $\Phi = (\phi_j)_{j=1}^3 \in H^2(\mathcal{Y})$.

Our second class of  solutions  to the sine-Gordon equation are the kink/anti-kink-type soliton, namely,
 profiles having the form (with $c_1=c_2=c_3=1$ without loss of generality)
\begin{equation}
\label{antikink}
\begin{cases}
\phi_1(x) = 4 \arctan \big( e^{-(x-a_1)}\big), & x \in (-\infty,0),\;\; \lim_{x\to -\infty}\phi_1(x)=2\pi,\\
\phi_j(x) = 4 \arctan \big( e^{-(x-a_j)}\big), & x \in (0,\infty),\;\;  \lim_{x\to +\infty}\phi_j(x)=0\;\; j=2,3,\\
\end{cases}
\end{equation}
where each $a_j$ is a constant determined by the boundary conditions \eqref{bcIfam}. Notice that $\phi_1$ is an anti-kink, with non-zero limit at $x = -\infty$, and hence, not belonging to $H^2(-\infty,0)$. It is coupled with two kinks at the other two edges, hence the name of kink/anti-kink structure (see Figure \ref{figAKsolitons} below).

In the forthcoming stability analysis, the family of linearized operators around the stationary profiles plays a fundamental role. These operators are characterized by the following self-adjoint diagonal matrix operators,
\begin{equation}\label{trav23}
\mathcal{L} \bold{v}=\Big (\Big(-c_j^2\frac{d^2}{dx^2}v_j + \cos (\phi_j)v_j
\Big)\delta_{j,k} \Big ),\quad \l1\leqq j, k\leqq 3,\;\;\bold{v}= (v_j)_{j=1}^3,
\end{equation}
where $\delta_{j,k}$ denotes the Kronecker symbol, and defined on domains with $\delta'$-type interaction at the vertex $\nu = 0$, $D(\mathcal{L}_\lambda) \equiv D_\mathrm{I}(\mathcal{F}_\lambda)$, for admissible parameters $\lambda$ that ensure the existence of profiles $(\phi_j)_{j=1}^3\in D_\mathrm{I}(\mathcal{F}_\lambda)$. 
 It is to be observed that the particular family \eqref{trav22} of kink-profile stationary solutions under consideration is such that $\Phi = (\phi_j)_{j=1}^3 \in D(\mathcal{L}_\lambda)$ in view that they satisfy the boundary conditions \eqref{bcIfam}.  An interesting characteristic of the spectrum structure associated with operators in \eqref{trav23}  on metric graphs is that they have a nontrivial Morse index (in general bigger or equal to 1) which makes the stability study not so immediate. Here we will use a novel  linear instability criterion  for stationary solutions of evolution models on metric graphs developed by Angulo\&Cavalcante in \cite{AngCav} (see also \cite{AngCav2}).

\subsubsection{Summary of results}

Let us summarize the main contributions of this paper and sketch the structure of the paper: 
\begin{itemize}
\item[$-$] First, in section \S \ref{seccriterium}, we review the general instability criterion for stationary solutions for the sine-Gordon model \eqref{sg2} on a $\mathcal{Y}$-junction developed in the companion paper \cite{AnPl-delta} (see Theorem \ref{crit} below. See also \cite{AngCav}). It essentially provides sufficient conditions on the flow of the semigroup generated by the linearization around the stationary solutions, for the existence of a pair of positive/negative real eigenvalues of the linearized operator based on its Morse index. It is to be observed that this instability criterion is very versatile, as it applies to any type of stationary solutions (such as anti-kinks or breathers, for example) and for different interactions at the vertex, such as both the $\delta$- and $\delta'$-types.
\item[$-$] The central section \S \ref{secinsI} is devoted to develop the instability theory of stationary solutions to the sine-Gordon equation with $\delta'$-interaction on a $\mathcal{Y}$-junction of type I. First, we focus on the kink-profile type waves defined in \eqref{trav22}-\eqref{bcinfty} and the  local well-posedness problem associated to \eqref{sg2} with a $\delta'$-interaction. In section \S \ref{secK} it is shown that, for a particular class of profiles satisfying an extra continuity condition (see \eqref{contcase} below) and for specific conditions on the parameters $a_j$ and $c_j$ (see $(i)$ and $(ii)$ in Theorem \ref{2main} below) the profiles are linearly and nonlinearly unstable. The result is based on a Morse index calculation and on the application of Theorem \ref{crit}. Related Morse index calculations for the remaining cases (but not yet conclusive in terms of stability and performed for later use)  can be found in Appendix \S \ref{secMorse}.  In section \S \ref{secAK} it is established the instability property of  the kink/anti-kink profiles type waves defined in \eqref{antikink} for also specific conditions on the parameters $a_j$ and $\lambda$ (see  Theorem \ref{0antimain} below).
\item[$-$] By convenience of the reader and by the sake of completeness we establish in Appendix \S A and \S B some results of the extension theory of symmetric operators used in the body of the manuscript.  Moreover, we also establish some Morse index calculations related to the kink/anti-kink profiles for a possible future
study.

\end{itemize}

\subsection*{On notation}

For any $-\infty\leq a<b\leq\infty$, we denote by $L^2(a,b)$ the Hilbert space equipped with the inner product $
(u,v)=\int_a^b u(x)\overline{v(x)}dx$.
By $H^n(a,b)$  we denote the classical  Sobolev spaces on $(a,b)\subseteq \mathbb R$ with the usual norm.   We denote by $\mathcal{Y}$ the junction of type I parametrized by the edges $E_1 = (-\infty,0)$, $E_j = (0,\infty)$, $j =2,3$,  attached to a common vertex $\nu=0$. On the graph $\mathcal{Y}$ we define the classical spaces 
  \begin{equation*}
  L^p(\mathcal{Y})=L^p(-\infty, 0)  \oplus L^p(0, +\infty) \oplus L^p(0, +\infty), \quad \,p>1,
  \end{equation*}   
 and 
  \begin{equation*}  
 \quad H^m(\mathcal{Y})=H^m(-\infty, 0) \oplus H^m(0, +\infty)  \oplus H^m(0, +\infty), 
 \end{equation*}   
with the natural norms. Also, for $\mathbf{u}= (u_j)_{j=1}^3$, $\mathbf{v}= (v_j)_{j=1}^3 \in L^2(\mathcal{Y})$, the inner product is defined by
$$
\langle \mathbf{u}, \mathbf{v}\rangle= \int_{-\infty}^0 u_1(x) \overline{v_1(x)} \, dx  + \sum_{j=2}^3 \int_0^{\infty}  u_j(x) \overline{v_j(x)} \, dx
$$
Depending on the context we will use the following notations for different objects. By $\|\cdot \|$ we denote  the norm in $L^2(\mathbb{R})$ or in $L^2(\mathcal{Y})$. By $\| \cdot\| _p$ we denote  the norm in $L^p(\mathbb{R})$ or in $L^p(\mathcal{Y})$.   Finally, if $A$ is a closed, densely defined symmetric operator in a Hilbert space $H$ then its domain is denoted by $D(A)$, the deficiency indices of $A$ are denoted by  $n_\pm(A):=\dim  \ker (A^*\mp iI)$, where $A^*$ is the adjoint operator of $A$, and the number of negative eigenvalues counting multiplicities (or Morse index) of $A$ is denoted by  $n(A)$. 

 \section{Preliminaries: Linear instability criterion for sine-Gordon model on a $\mathcal Y$-junction}
\label{seccriterium}

In this section we review the linear instability criterion of stationary solutions for the sine-Gordon model \eqref{sg2} on a  $\mathcal{Y}$-junction developed in \cite{AnPl-delta} (see also \cite{AngCav, AngCav2}). Although the stability analysis in \cite{AnPl-delta} pertains to interactions of $\delta$-type at the vertex, it is important to note that the criterion proved in that reference also applies to any type of stationary solutions independently of the boundary conditions under consideration and, therefore, it can be used to study the present configurations with boundary rules at the vertex of $\delta'$-interaction type, or even to other types of stationary solutions to the sine-Gordon equation such as breathers, for instance. In addition, the criterion applies to both the $\mathcal Y$-junction of type I (see Figure \ref{figYtipoI}) and of type II (see Figure \ref{figYtipoII}). 

Let $\mathcal{Y}$ be a $\mathcal{Y}$-junction of type I or II. Let us suppose that $JE$ on a domain $D(JE)\subset H^1(\mathcal Y)\times L^2(\mathcal Y)$ is the infinitesimal generator of a $C_0$-semigroup on $H^1(\mathcal Y)\times L^2(\mathcal Y)$ and that there exists an stationary solution $\Upsilon=(\zeta_1, \zeta_2, \zeta_3,0,0,0)\in D(JE)$. Thus, every component $\zeta_j$ satisfies the equation
\begin{equation}\label{statio}
-c^2_j \zeta''_j + \sin (\zeta_j)=0,\quad j=1,2,3.
\end{equation}
 Now, we suppose  that $\bold w$ satisfies formally  equality in \eqref{stat1} and we define
  \begin{equation}\label{stat3}
 \bold v \equiv \bold w-\Upsilon,
  \end{equation}
  then, from \eqref{statio} we obtain the following  linearized system for \eqref{stat1} around $\Upsilon$,
   \begin{equation}\label{stat4}
  \bold v_t=J\mathcal E\bold v,
 \end{equation} 
  with $\mathcal E$ being the $6\times 6$ diagonal-matrix $\mathcal E=\left(\begin{array}{cc} \mathcal L & 0 \\0 & I_3\end{array}\right)$, and 
 \begin{equation}\label{stat5}
\mathcal{L} =\Big (\Big(-c_j^2\frac{d^2}{dx^2}+ \cos (\zeta_j)
\Big)\delta_{j,k} \Big ),\qquad 1\leqq j, k\leqq 3.
\end{equation}
We point out the equality $J\mathcal E=J E+\mathcal{T}$, with 
$$
\mathcal{T} = \left(\begin{array}{cc} 0& 0 \\ \big( - \cos(\zeta_j) \, \delta_{j,k} \big)& 0\end{array}\right)
$$
 being a \emph{bounded} operator on $H^1(\mathcal Y)\times L^2(\mathcal Y)$. This  implies that $J\mathcal E$ also generates a  $C_0$-semigroup on  $H^1(\mathcal Y)\times L^2(\mathcal Y)$ (see Pazy \cite{Pa}).
  
The linear instability criterion provides sufficient conditions for the trivial solution $\bold v \equiv 0$ to be unstable by the linear flow of \eqref{stat4}. More precisely, it underlies the existence of a {\it growing mode solution} to \eqref{stat4} of the form $ \bold v=e^{\lambda t} \Psi$ and $\RE \lambda >0$. To find it, one needs to solve the formal system 
 \begin{equation}\label{stat6}
 J\mathcal E \Psi=\lambda \Psi,
\end{equation}
with $\Psi\in D(J\mathcal E)$. If we denote by $\sigma(J\mathcal E)= \sigma_{\mathrm{pt}}(J\mathcal E)\cup \sigma_{\mathrm{ess}}(J\mathcal E)$ the spectrum  of $J\mathcal E$ (namely, $\lambda \in  \sigma_{\mathrm{pt}}(J\mathcal E)$ if $\lambda$ is isolated and with finite multiplicity) then we have the following
\begin{definition}
The stationary vector solution $\Upsilon \in D(\mathcal E)$    is said to be \textit{spectrally stable} for the sine-Gordon model \eqref{stat1} if the spectrum of $J\mathcal E$, $\sigma(J\mathcal \mathcal E)$, satisfies $\sigma(J\mathcal E)\subset i\mathbb{R}.$
Otherwise, the stationary solution $\Upsilon\in D(\mathcal E)$   is said to be \textit{spectrally unstable}.
\end{definition}
\begin{remark}
It is well-known that $ \sigma_{\mathrm{pt}}(J\mathcal E)$ is symmetric with respect to both the real and imaginary axes and $ \sigma_{\mathrm{ess}}(J\mathcal E)\subset i\mathbb{R}$ under the assumption that $J$ is skew-symmetric and that $\mathcal E$ is self-adjoint (by supposing, for instance, Assumption $(S_3)$ below for $\mathcal L$; see \cite[Lemma 5.6 and Theorem 5.8]{GrilSha90}). These cases on $J$ and  $\mathcal E$ are considered in the theory. Hence, it is equivalent to say that $\Upsilon\in D(J\mathcal E)$ is  \textit{spectrally stable} if $ \sigma_{\mathrm{pt}}(J\mathcal E)\subset i\mathbb{R}$, and it is spectrally unstable if $ \sigma_{\mathrm{pt}}(J\mathcal E)$ contains point $\lambda$ with  $\RE \lambda>0.$
\end{remark}

 It is widely known  that the spectral instability of a specific traveling wave solution of an evolution type model is   a key prerequisite to show their nonlinear instability property (see \cite{GrilSha90, Lopes, ShaStr00} and references therein). Thus we have the following definition.

 \begin{definition}\label{nonstab}
The stationary vector solution $\Upsilon \in D(\mathcal E)$    is said to be \textit{nonlinearly unstable} in $X\equiv H^1(\mathcal Y)\times L^2(\mathcal Y)$-norm for model sine-Gordon \eqref{stat1} if there is $\epsilon>0$ such that for every $\delta>0$ there exist an initial data $\bold w_0$ with $\|\Upsilon -\bold w_0\|_X<\delta$ and an instant $t_0=t_0(\bold w_0)$, such that $\|\bold w(t_0)-\Upsilon \|_X>\epsilon$, where $\bold w=\bold w(t)$ is the solution of the sine-Gordon model with initial data $\bold w(0)=\bold w_0$.
\end{definition}
 
 From \eqref{stat6}, our eigenvalue problem to solve is now reduced to,
  \begin{equation}\label{stat11}    
 J\mathcal E\Psi =\lambda \Psi, \quad \RE \lambda >0,\;\;\Psi\in D(\mathcal E).
 \end{equation}  
 Next, we establish our theoretical framework and assumptions for obtaining a nontrivial solution to  problem in \eqref{stat11}:
 \begin{enumerate}
 \item[($S_1$)]  $J\mathcal E$ is the generator of a $C_0$-semigroup $\{S(t)\}_{t\geqq 0}$.  
 \item[($S_2$)] Let $\mathcal L$ be the matrix-operator in \eqref{stat5}  defined on a domain $D(\mathcal L)\subset L^2(\mathcal Y)$ on which $\mathcal L$ is self-adjoint.
 \item[($S_3$)] Suppose $\mathcal L:D(\mathcal L)\to L^2(\mathcal Y)$ is  invertible  with Morse index $n(\mathcal L)=1$ and such that $\sigma(\mathcal L)=\{\lambda_0\}\cup J_0$ with $J_0\subset [r_0, +\infty)$, for $r_0>0$, and $\lambda_0<0$,
 \end{enumerate} 
 
The criterion for linear instability reads precisely as follows.
\begin{theorem}[linear instability criterion \cite{AnPl-delta, AngCav, AngCav2}]
\label{crit}
Suppose the assumptions $(S_1)$ - $(S_3)$ hold.  Then the operator $J\mathcal E$ has a real positive and a real negative eigenvalue. 
\end{theorem}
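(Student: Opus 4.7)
The approach is to exploit the block-Hamiltonian structure of $J$ and $\mathcal{E}$ to reduce the spectral problem \eqref{stat11} for $J\mathcal{E}$ acting on $H^1(\mathcal{Y})\times L^2(\mathcal{Y})$ to a scalar eigenvalue problem for the self-adjoint operator $\mathcal{L}$ on $L^2(\mathcal{Y})$, and then to pull the desired real eigenvalues directly out of the unique negative eigenvalue $\lambda_0$ provided by hypothesis $(S_3)$. This is the natural adaptation to a metric graph of the Grillakis--Shatah--Strauss reduction, made possible by the fact that $(S_1)$--$(S_3)$ already absorb the hard operator-theoretic content (self-adjointness of $\mathcal{L}$, spectral gap, skew-symmetry of $J$, generation of a $C_0$-semigroup).

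Writing $\Psi=(f,g)^\top$ with $f,g\in L^2(\mathcal{Y})$ (three components each), the computation $J\mathcal{E}=\bigl(\begin{smallmatrix}0 & I_3 \\ -\mathcal{L} & 0\end{smallmatrix}\bigr)$ turns $J\mathcal{E}\Psi=\lambda\Psi$ into $g=\lambda f$ and $-\mathcal{L}f=\lambda g$, which combine into the reduced equation $\mathcal{L}f=-\lambda^{2}f$. Invoking $(S_3)$, let $\phi_0\in D(\mathcal{L})$ be an eigenfunction with $\mathcal{L}\phi_0=\lambda_0\phi_0$, set $\lambda_\pm:=\pm\sqrt{-\lambda_0}\in\mathbb{R}\setminus\{0\}$, and define $\Psi_\pm:=(\phi_0,\lambda_\pm\phi_0)^\top$. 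Then
$$
J\mathcal{E}\Psi_\pm=(\lambda_\pm\phi_0,\,-\mathcal{L}\phi_0)^\top=(\lambda_\pm\phi_0,\,-\lambda_0\phi_0)^\top=(\lambda_\pm\phi_0,\,\lambda_\pm^{2}\phi_0)^\top=\lambda_\pm\Psi_\pm,
$$
so $\lambda_+>0$ and $\lambda_-<0$ are eigenvalues of $J\mathcal{E}$, as required. The uniqueness of the negative eigenvalue (the Morse index $n(\mathcal{L})=1$) is used here exactly to produce the single real pair $\pm\sqrt{-\lambda_0}$; the spectral gap $\sigma(\mathcal{L})\setminus\{\lambda_0\}\subset[r_0,\infty)$ and invertibility of $\mathcal{L}$ guarantee that no other real eigenvalues of $J\mathcal{E}$ arise from this reduction, but that refinement is not needed for the present statement.

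The step I expect to demand the most care in the final write-up is verifying that $\Psi_\pm$ belongs to $D(J\mathcal{E})\subset H^1(\mathcal{Y})\times L^2(\mathcal{Y})$, which is where the metric-graph aspect enters through the vertex conditions encoded at $\nu=0$. Because the lower-right block of $\mathcal{E}$ is the identity and $J\mathcal{E}$ generates a $C_0$-semigroup on $H^1(\mathcal{Y})\times L^2(\mathcal{Y})$ by $(S_1)$, the natural identification of the domain is $D(J\mathcal{E})=D(\mathcal{L})\times H^1(\mathcal{Y})$; both inclusions then hold for free, since $\phi_0\in D(\mathcal{L})$ by construction and $\lambda_\pm\phi_0\in H^1(\mathcal{Y})$ follows from the embedding $D(\mathcal{L})\subset H^2(\mathcal{Y})\hookrightarrow H^1(\mathcal{Y})$. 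Once this compatibility is recorded, the eigenvalues $\lambda_\pm$ lie in $\sigma_{\mathrm{pt}}(J\mathcal{E})$ (they are isolated with finite multiplicity because $\lambda_0$ is, by $(S_3)$), and the theorem follows.
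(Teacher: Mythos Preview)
Your argument is correct. The block computation $J\mathcal{E}=\bigl(\begin{smallmatrix}0 & I_3\\ -\mathcal{L} & 0\end{smallmatrix}\bigr)$ reduces the eigenvalue problem to $\mathcal{L}f=-\lambda^2 f$, and the negative eigenvalue $\lambda_0$ of $\mathcal{L}$ immediately supplies the pair $\lambda_\pm=\pm\sqrt{-\lambda_0}$ with explicit eigenvectors $\Psi_\pm=(\phi_0,\lambda_\pm\phi_0)^\top\in D(\mathcal{L})\times H^1(\mathcal{Y})=D(J\mathcal{E})$. The domain check is fine for the reasons you give.

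This is, however, \emph{not} the route taken in the references the paper defers to. As noted in the remark immediately following the theorem, the proof in \cite{AnPl-delta,AngCav} proceeds via Krasnosel'ski\u{\i}'s theory of positive operators leaving a closed convex cone invariant (a Krein--Rutman type argument applied, presumably, to the resolvent or the semigroup), which extracts the real eigenvalue without using the explicit $2\times2$ block structure of $J\mathcal{E}$. Your approach is considerably more elementary and in fact uses strictly less than the full hypotheses: you never invoke $(S_1)$, nor the invertibility of $\mathcal{L}$, nor the spectral gap $J_0\subset[r_0,\infty)$ from $(S_3)$---only the existence of a negative eigenvalue of the self-adjoint $\mathcal{L}$. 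The cone argument, by contrast, is more robust in settings where the reduction $\mathcal{L}f=-\lambda^2 f$ is unavailable (for instance, when the lower-right block of $\mathcal{E}$ is not the identity, or when $J$ does not split so cleanly); that is what the referenced criterion buys in its original generality. For the present statement, with $J$ and $\mathcal{E}$ fixed as in \eqref{stat2} and \eqref{stat5}, your direct argument is a perfectly legitimate and shorter proof. One minor stylistic point: your opening remark that this is ``the natural adaptation \ldots\ of the Grillakis--Shatah--Strauss reduction'' may invite confusion, since the paper explicitly notes that the GSS \emph{stability} framework does not apply here for lack of translational symmetry; what you are doing is purely the linear-algebraic block reduction, which requires no symmetry at all, so you might phrase it that way instead.
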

\begin{proof}
See the proof of Theorem 3.2 in \cite{AnPl-delta} (see also  \cite{AngCav}).
\end{proof}
\begin{remark}
The proof of Theorem \ref{crit} is based on
the characterization of eigenvectors associated to a nonnegative eigenvalue for bounded linear operators with invariant closed convex cones (see Theorem 3.3 in \cite{AnPl-delta}, or Krasnoselskii \cite{Kra}, Chapter 2, section 2.2.6).  We call the attention that the instability framework developed in Grillakis\&Shatah\&Strauss in \cite{GrilSha90} can not be applied in our study due to loss of the  translation symmetry property of our model on $\mathcal Y$-junctions.
\end{remark}

\section{Instability of stationary solutions for the sine-Gordon equation with $\delta'$-interaction on a $\mathcal{Y}$-junction of type I}
\label{secinsI}

In this section we study the stability of stationary solutions determined by a $\delta'$-interaction type at the vertex  $\nu=0$ of a $\mathcal{Y}$-junction of type I. First we study the kink-profile type in \eqref{trav22}-\eqref{bcinfty} and the the local well-posedness problem associated to \eqref{sg2} with a $\delta'$-interaction. Next, we examine the structure of the stationary wave solutions under consideration. Finally, we apply the linear instability criterion (Theorem \ref{crit}) to prove that the family of stationary solutions of kink type \eqref{trav22} are linearly (and nonlinearly) unstable (see Theorem \ref{2main} below). Our second goal is the study of the kink/anti-kink type of profile defined in \eqref{antikink} and, similarly as in the former kink-type case, we establish the necessary ingredients for obtaining our instability results.

\subsection{Kink-profile instability  on a $\mathcal{Y}$-junction of type I}
\label{secK}

Let us start our stability study for the kink-profile type in \eqref{trav22}. First, we examine the Cauchy problem associated to sine-Gordon model in \eqref{stat1}. As this study is not completely  standard in the case of metric graphs we focus on the new ingredients that arise. 

\subsubsection{The Cauchy problem}
\label{seclocalWP}

We establish the local well-posedness of vectorial equation \eqref{stat1} in $H^1(\mathcal Y)\times L^2(\mathcal Y)$ with a $\mathcal Y$-junction of first type. Since the proof is similar to its $\delta$-interaction counterpart (see Section \S 2 in \cite{AnPl-delta}), we gloss over many details and specialize the discussion to the points that are particular to the $\delta'$-interaction case. 

We start by studying  operator $\mathcal F$ in \eqref{Lapla} (which will be denoted here by $\mathcal F=\mathcal H_\lambda$) and defined on the $\delta' $-interaction domain $D(\mathcal H_\lambda)$:
\begin{equation}\label{3trav23}
D(\mathcal{H}_{\lambda})= \Big \{(v_j)_{j=1}^3\in H^2(\mathcal{Y}):
c_1v'_1(0-)=c_2v'_2(0+)=c_3v'_3(0+),\;\; \sum_{j=2}^3 c_jv_j(0+)-c_1v_1(0-)=\lambda c_1v'_1(0-) \Big \},
\end{equation}
with $\lambda\in \mathbb R$. In the sequel we  establish the spectral properties of the family of self-adjoint operator $(\mathcal H_\lambda, D(\mathcal H_{\lambda}))$ (see Proposition \ref{2M} in  Appendix \S A).

\begin{theorem}\label{spectrum2}
Let $\lambda\in \mathbb R-\{0\}$. Then  the essential spectrum of $(\mathcal H_\lambda, D(\mathcal H_{ \lambda}))$ is purely absolutely continuous and $\sigma_{\mathrm{ess}}(\mathcal H_\lambda)=\sigma_{\mathrm{ac}}(\mathcal H_\lambda)=[0,+\infty)$. If $ \lambda<0$ then $\mathcal H_\lambda$ has precisely one negative, simple eigenvalue, {\it i.e.}, its point spectrum $\sigma_{\mathrm{pt}}(\mathcal H_\lambda)$ is 
$$
\sigma_{\mathrm{pt}}(\mathcal H_\lambda)=\Big \{-\frac{1}{\lambda^2}(\sum_{j=1}^3|c_j|)^2\Big\},
$$ 
such that for $\alpha= \frac{1}{\lambda}\sum_{j=1}^3|c_j|$, $\Phi_\lambda=(-\sign(c_1)e^{-\frac{\alpha}{|c_1|} x}, \sign(c_2)e^{\frac{\alpha}{|c_2|} x}, \sign(c_3)e^{\frac{\alpha}{|c_3|} x})$ is the associated eigenfunction. If $ \lambda> 0$ then $\mathcal H_\lambda$ has no eigenvalues, $\sigma_{\mathrm{pt}}(\mathcal H_\lambda)=\varnothing$.
\end{theorem}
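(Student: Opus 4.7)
The plan is to treat the essential and point spectra separately, exploiting the elementary ODE structure of the operator and abstract perturbation theory for the continuous part.

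For the essential spectrum, I would argue via a reference operator: the direct sum of Dirichlet Laplacians on the three edges $E_1=(-\infty,0)$ and $E_j=(0,\infty)$, $j=2,3$, weighted by $c_j^2$. This decoupled operator is a self-adjoint extension of the same underlying symmetric operator (with finite deficiency indices, as recorded in Proposition \ref{2M} in Appendix \S A), it has purely absolutely continuous spectrum equal to $[0,+\infty)$ on each component, and by Krein's resolvent formula its resolvent differs from that of $\mathcal{H}_\lambda$ by a finite-rank operator. Weyl's theorem on invariance of essential spectrum under compact resolvent perturbations then yields $\sigma_{\mathrm{ess}}(\mathcal{H}_\lambda)=[0,+\infty)$, while the Kato--Rosenblum theorem preserves absolute continuity, so $\sigma_{\mathrm{ac}}(\mathcal{H}_\lambda)=[0,+\infty)$ and the singular-continuous part is empty.

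For the point spectrum I would solve $-c_j^2 v_j''=\mu v_j$ explicitly on each edge. For $\mu\geq 0$ the only solutions are linear or oscillatory exponentials, none of which lie in $L^2$ on a semi-infinite edge, ruling out embedded and zero eigenvalues on all edges simultaneously. For $\mu<0$, write $\mu=-\alpha^2$ with $\alpha>0$ and, since $v\in L^2(\mathcal Y)$, the only admissible solutions are the decaying exponentials
\begin{equation*}
v_1(x)=A_1 e^{\alpha x/c_1}\;(x<0),\qquad v_j(x)=A_j e^{-\alpha x/c_j}\;(x>0),\;\; j=2,3.
\end{equation*}
The flux-continuity condition $c_1 v_1'(0-)=c_2 v_2'(0+)=c_3 v_3'(0+)$ reduces to $\alpha A_1=-\alpha A_2=-\alpha A_3$, so $A_1=-A_2=-A_3$; substituting into the $\delta'$-rule $\sum_{j=2}^3 c_j v_j(0+)-c_1 v_1(0-)=\lambda c_1 v_1'(0-)$ collapses to the single scalar equation
\begin{equation*}
-(c_1+c_2+c_3)=\lambda\alpha,
\end{equation*}
which admits a positive root $\alpha$ if and only if $\lambda<0$, in which case $\alpha=-\tfrac{1}{\lambda}\sum_{j=1}^3 c_j$ is uniquely determined. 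This gives exactly one negative eigenvalue $\mu=-\alpha^2=-\tfrac{1}{\lambda^2}\bigl(\sum_j |c_j|\bigr)^2$, necessarily simple since $A_1$ determines the whole vector, and the formula in the statement follows after normalizing $A_1$ (keeping $\sign(c_j)$ for notational symmetry, even though $c_j>0$). For $\lambda>0$ the scalar equation has no positive solution, so $\sigma_{\mathrm{pt}}(\mathcal H_\lambda)=\varnothing$.

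The main obstacle I expect is the essential-spectrum step, which requires a careful invocation of Krein's formula together with the deficiency-index computation carried out in Appendix \S A to certify that the resolvent difference with the decoupled Dirichlet reference operator is indeed of finite rank; once that abstract input is available, the ODE computation for the discrete spectrum is entirely elementary and yields both the eigenvalue formula and its simplicity in one stroke.
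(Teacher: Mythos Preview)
Your proposal is correct and follows essentially the same strategy as the paper: compare $\mathcal H_\lambda$ to a decoupled reference Laplacian to identify the essential/absolutely continuous spectrum, and handle the point spectrum by direct ODE analysis. The paper's proof is terser---it cites the companion paper \cite{AnPl-delta} for the Morse-index argument and invokes the Neumann half-line Laplacian for the essential spectrum---whereas you spell out the eigenvalue computation explicitly and appeal to Krein's resolvent formula plus Weyl/Kato--Rosenblum. Both routes land in the same place.

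One technical caveat: the decoupled \emph{Dirichlet} Laplacian is \emph{not} a self-adjoint extension of the specific symmetric operator $\mathcal H$ in Proposition~\ref{2M}, because functions in the Dirichlet domain need not satisfy $v_j'(0)=0$, so $D(\mathcal H)\not\subset D_{\mathrm{Dir}}$. The paper's choice of the \emph{Neumann} reference is the natural one here, since $D(\mathcal H)\subset D_{\mathrm{Neu}}$ holds (it is the $\lambda=\infty$ member of the $\delta'$ family, corresponding to the excluded parameter $\theta=\pi/2$ in the proof of Proposition~\ref{2M}), and then Proposition~\ref{esse} applies directly. Your argument is easily repaired either by switching to Neumann, or by passing to the smaller minimal operator on $\mathcal Y$ (deficiency indices $(3,3)$), for which both Dirichlet and $\mathcal H_\lambda$ are self-adjoint extensions and Krein's formula still yields a finite-rank resolvent difference.
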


\begin{proof} By applying Proposition \ref{2M} in Appendix \ref{secAppET} and by following the same ideas 
as in the proof of Theorem 2.1 in \cite{AnPl-delta}, we obtain the conclusion with respect to the Morse index for $\mathcal H_\lambda$. Moreover, since the operator $A=-\frac{d^2}{dx^2}$ with the Neumann-domain $D_{\mathrm{Neu}}=\{f\in H^2(0,+\infty): f'(0+)=0\}$ has $\sigma_{\mathrm{ess}}(A)=[0,+\infty)$, we obtain the statement about the essential spectrum of $\mathcal F_\lambda$. This finishes the proof.
\end{proof}

\begin{theorem}\label{cauchy2}
Let $\lambda\in \mathbb R-\{0\}$ and consider the linear operators $J$ and $E$ defined in \eqref{stat2}. Then, $\mathcal A\equiv JE$ with $D(\mathcal A)= D(\mathcal H_\lambda)\times H^1(\mathcal Y)$ is the infinitesimal  generator of a $C_0$-semigroup $\{G(t)\}_{t\geqq 0}$ on $H^1(\mathcal Y)\times L^2(\mathcal Y)$. The initial value problem 
\begin{equation}\label{LW1}
\begin{cases}
\bold z_{t}=\mathcal A\bold z \\
\bold z(0)=\bold z_0\in D(\mathcal A)=D(\mathcal H_\lambda)\times H^1(\mathcal Y)
\end{cases}
\end{equation}
has a unique solution $\bold z\in C([0, +\infty): D(\mathcal A))\cap C^1((0, +\infty): H^1(\mathcal Y)\times L^2(\mathcal Y))$ given by $\bold z(t)=G(t)\bold z_0$, $t\geqq 0$. Moreover, for any $\Psi\in H^1(\mathcal Y)\times L^2(\mathcal Y)$ and $\theta>\beta_0+1$ we have the representation formula
\begin{equation}\label{FRW}
G(t)\Psi=\frac{1}{2\pi i}\int_{\theta-i\infty}^{\theta+i\infty} e^{\eta t} R(\eta: \mathcal A) \Psi d\eta
\end{equation}
where $\eta\in \rho(\mathcal A)$ with $\RE \eta = \theta$ and $R(\eta: \mathcal A)=(\eta I-\mathcal A)^{-1}$, and for every $\delta>0$, the integral converges uniformly in $t$ for every $t\in [\delta, 1/\delta]$. Here $\beta_0=\frac{1}{\lambda^2} (\sum_{j=1}^3|c_j|)^2$.
\end{theorem}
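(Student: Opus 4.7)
The plan is to follow the same template as the $\delta$-interaction case in \cite{AnPl-delta} and to reduce the question to an application of a standard generation theorem in an equivalent energy inner product. The key inputs are already in place: Theorem \ref{spectrum2} gives that $\mathcal{H}_\lambda$ is self-adjoint with lowest spectral point $-\beta_0$, where $\beta_0=\frac{1}{\lambda^{2}}(\sum_{j=1}^3|c_j|)^{2}$, and the domain $D(\mathcal{H}_\lambda)$ is explicit.

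First, I would verify that $\mathcal{A}=JE$ is closed and densely defined on $D(\mathcal{A})=D(\mathcal{H}_\lambda)\times H^1(\mathcal{Y})$; this follows because $E$ is self-adjoint and $J$ is unitary on $H^1(\mathcal{Y})\times L^2(\mathcal{Y})$. Next I would set $B:=\mathcal{H}_\lambda+(\beta_0+1)I$, which by Theorem \ref{spectrum2} is strictly positive and self-adjoint with $D(B)=D(\mathcal{H}_\lambda)$. The quadratic form of $B$ provides an equivalent inner product on $H^1(\mathcal{Y})\times L^2(\mathcal{Y})$ given by
\begin{equation*}
\langle (u,v),(\tilde{u},\tilde{v})\rangle_{X}=\langle B^{1/2}u,B^{1/2}\tilde{u}\rangle+\langle v,\tilde{v}\rangle,
\end{equation*}
where equivalence with the standard $H^1\times L^2$-norm follows from the fact that $B$ is a lower order perturbation of the self-adjoint realization of the sum of the three Laplacians on $\mathcal Y$ with $\delta'$ conditions. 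In this new Hilbert space $X$, the matrix operator $\tilde{\mathcal{A}}:=\bigl(\begin{smallmatrix}0&I\\-B&0\end{smallmatrix}\bigr)$ with domain $D(\mathcal{H}_\lambda)\times H^1(\mathcal{Y})$ is skew-adjoint by a direct computation (the cross terms cancel by self-adjointness of $B$), and Stone's theorem yields a unitary $C_0$-group generated by $\tilde{\mathcal{A}}$. Since
\begin{equation*}
\mathcal{A}=\tilde{\mathcal{A}}+\mathcal{K},\qquad \mathcal{K}:=\begin{pmatrix}0&0\\(\beta_0+1)I&0\end{pmatrix},
\end{equation*}
and $\mathcal{K}$ is bounded on $X$ (it maps $(u,v)\mapsto(0,(\beta_0+1)u)$ with $\|(\beta_0+1)u\|_{L^2}\lesssim\|u\|_{X}$), the bounded perturbation theorem (Pazy \cite{Pa}, Chapter 3) shows that $\mathcal{A}$ generates a $C_0$-group $\{G(t)\}_{t\in\mathbb{R}}$ on $H^1(\mathcal{Y})\times L^2(\mathcal{Y})$ with type bounded by $\|\mathcal{K}\|_{X}\leq\beta_0+1$. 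Existence, uniqueness and regularity for \eqref{LW1} then follow from the standard semigroup statements when $\bold z_0\in D(\mathcal{A})$.

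For the representation formula \eqref{FRW}, I would invoke the inverse Laplace transform representation of a $C_0$-semigroup: for any $\theta$ strictly larger than the growth bound of $\{G(t)\}_{t\geqq 0}$, the resolvent $R(\eta:\mathcal{A})=(\eta I-\mathcal{A})^{-1}$ exists on the vertical line $\{\RE\eta=\theta\}$ and the Bromwich-type integral $\frac{1}{2\pi i}\int_{\theta-i\infty}^{\theta+i\infty}e^{\eta t}R(\eta:\mathcal{A})\Psi\,d\eta$ converges (in a Cesàro/improper sense) to $G(t)\Psi$, uniformly on compact subintervals of $(0,\infty)$; see Pazy \cite{Pa}, Chapter 1, Corollary 7.5. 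The choice $\theta>\beta_0+1$ is admissible because of the type estimate obtained in the previous step.

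The main obstacle I expect is the first step: verifying that $\tilde{\mathcal{A}}$ is genuinely skew-adjoint in $X$ with the chosen domain, which requires the boundary conditions defining $D(\mathcal{H}_\lambda)$ (the $\delta'$ flux/jump relations at the vertex $\nu=0$) to interact correctly with the $H^1$ conditions on the $v$-component. The cleanest route is to use the self-adjointness of $\mathcal{H}_\lambda$ supplied by Proposition \ref{2M}; this makes the integration-by-parts computation that underlies skew-symmetry into a one-line statement, and the domain of $\tilde{\mathcal{A}}^*$ can be identified with $D(\mathcal{H}_\lambda)\times H^1(\mathcal{Y})$ by the same abstract argument used in the $\delta$ case in \cite{AnPl-delta}.
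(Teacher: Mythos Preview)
Your proposal is correct and follows essentially the same route as the paper: the paper also invokes Theorem \ref{spectrum2}, shifts $\mathcal{H}_\lambda$ by $(\beta_0+1)I$ to obtain a positive operator, and uses its quadratic form to define an equivalent inner product on $H^1(\mathcal{Y})$ (written out explicitly there as \eqref{2inner}--\eqref{2norm}, with the $\delta'$ boundary term $\tfrac{1}{\lambda}\bigl|\sum_{j=2}^3 c_ju_j(0+)-c_1u_1(0-)\bigr|^2$ appearing as the trace contribution of the form of $\mathcal{H}_\lambda$), after which the argument is deferred to Theorem 2.5 of \cite{AnPl-delta} exactly as you outline. Your abstract $B^{1/2}$ formulation and the paper's explicit inner product are the same object, so there is no substantive difference.
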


\begin{proof} The proof follows the same strategy as in Theorem 2.5 in \cite{AnPl-delta}: apply Theorem \ref{spectrum2}, without loss of generality set $c_j^2=1$, and consider the following inner product in $H^1(\mathcal Y)$, 
\begin{equation}\label{2inner}
\begin{aligned}
\langle \bold u, \bold v\rangle_{1,\lambda}=& \int_{-\infty}^0 u'_1\overline{v'_1}dx +\sum_{j=2}^3 \int_0^{\infty} u'_j\overline{v_j}dx +(\beta_0 +1) \langle \bold u, \bold v\rangle \\
&+ \frac{1}{\lambda}\Big[\sum_{j=2}^3 c_ju_j(0+)-c_1u_1(0-)\Big]\Big[\sum_{j=2}^3 c_j\overline{v_j}(0+)-c_1\overline{v_1}(0-)\Big],
\end{aligned}
\end{equation}
which induces a norm, $\|\cdot\|_{1,\lambda}$, equivalent to the standard norm in $H^1(\mathcal Y)$ and given by
\begin{equation}\label{2norm}
\|\bold v\|_{1,\lambda}^2=\|\bold v'\|^2 _{L^2(\mathcal Y)} + (\beta_0+1)\|\bold v\|^2 _{L^2(\mathcal Y)} +\frac{1}{\lambda}\Big|\sum_{j=2}^3 c_ju_j(0+)-c_1u_1(0-)\Big|^2,
\end{equation}
where for $\lambda<0$, $\beta_0=\frac{9}{\lambda^2}$, and for $\lambda> 0$, $\beta_0=0$. Similar arguments to those in the proof of Theorem 2.5 in \cite{AnPl-delta} yield the conclusion.
\end{proof}

Lastly, by using the contraction mapping principle as in the proof of Theorem 2.7 in \cite{AnPl-delta}, we obtain the following local well-posedness theorem for the sine-Gordon equation on $H^1(\mathcal{Y})\times L^2(\mathcal Y) $. The proof is omitted.

\begin{theorem}\label{well2} 
	For any $\Psi \in H^1(\mathcal{Y})\times L^2(\mathcal Y) $
	there exists $T > 0$ such that the sine-Gordon
	equation \eqref{stat1} has a unique solution $\mathbf w \in C
	([0,T]; H^1(\mathcal{Y})\times  L^2(\mathcal Y) )$ satisfying  $\mathbf
	w(0)=\Psi$. For
	each $T_0\in (0, T)$ the mapping data-solution
	\begin{equation}\label{mapping}
	\Psi\in H^1(\mathcal{Y})\times  L^2(\mathcal Y) \to \mathbf w \in C ([0,T_0];
	H^1(\mathcal{Y})\times  L^2(\mathcal Y) ),
	\end{equation}
	 is at least of class $C^2$.

\end{theorem}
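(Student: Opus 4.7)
My approach would be the standard one for semilinear evolution equations: recast \eqref{stat1} in mild (Duhamel) form using the $C_0$-semigroup $\{G(t)\}_{t\geqq 0}$ furnished by Theorem \ref{cauchy2}, then close a Banach fixed-point argument in $X := H^1(\mathcal{Y})\times L^2(\mathcal{Y})$. Since $\mathcal{A}=JE$ generates $\{G(t)\}$ on $X$, standard $C_0$-semigroup theory yields constants $M\geqq 1$ and $\omega\in\mathbb{R}$ with $\|G(t)\|_{\mathcal{B}(X)}\leqq Me^{\omega t}$, and the Cauchy problem is equivalent to the integral equation
\[
\mathbf{w}(t) = G(t)\Psi + \int_0^t G(t-s)\,F(\mathbf{w}(s))\,ds.
\]

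The key input on $F(\mathbf{w})=(0,0,0,-\sin u_1,-\sin u_2,-\sin u_3)^\top$ is that it is globally Lipschitz as a self-map of $X$: the elementary bound $|\sin a-\sin b|\leqq |a-b|$ gives
\[
\|F(\mathbf{w}_1)-F(\mathbf{w}_2)\|_X = \|\sin u^{(1)}-\sin u^{(2)}\|_{L^2(\mathcal{Y})} \leqq \|u^{(1)}-u^{(2)}\|_{L^2(\mathcal{Y})} \leqq \|\mathbf{w}_1-\mathbf{w}_2\|_X.
\]
Setting $\Gamma(\mathbf{w})(t):=G(t)\Psi+\int_0^tG(t-s)F(\mathbf{w}(s))\,ds$, I would check that $\Gamma$ sends a sufficiently large closed ball of $C([0,T];X)$ into itself and is a strict contraction there as soon as $T>0$ is chosen so that $MTe^{|\omega|T}<1$. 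Banach's theorem then produces the unique fixed point $\mathbf{w}\in C([0,T];X)$ with $\mathbf{w}(0)=\Psi$, and uniqueness on any common existence interval follows from a Gronwall estimate applied to the difference of two mild solutions.

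The main obstacle I expect concerns the $C^2$ regularity of the data-to-solution map claimed in \eqref{mapping}: the Nemitsky operator $u\mapsto \sin(u)$ is \emph{not} Fréchet differentiable as a self-map of $L^2(\mathcal{Y})$, so a naive scheme of differentiating in $X$ would fail. The remedy is the one-dimensional Sobolev embedding $H^1(E_j)\hookrightarrow L^\infty(E_j)$: Taylor expansion gives $|\sin(u_0+h)-\sin(u_0)-\cos(u_0)h|\leqq \tfrac{1}{2}|h|^2$ pointwise, whence
\[
\|\sin(u_0+h)-\sin(u_0)-\cos(u_0)h\|_{L^2}\leqq \tfrac{1}{2}\|h\|_{L^\infty}\|h\|_{L^2}\leqq C\|h\|_{H^1}^2,
\]
and analogous bounds persist for the higher-order multilinear remainders. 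Consequently $F\colon X\to X$ is $C^2$ (in fact $C^\infty$), and the classical smooth dependence result for semilinear equations---applied, for instance, via the implicit function theorem to the fixed-point equation $\mathbf{w}=\Gamma_\Psi(\mathbf{w})$---then delivers the required $C^2$ regularity of $\Psi\mapsto\mathbf{w}$ into $C([0,T_0];X)$. The remainder of the argument would parallel Theorem 2.7 of \cite{AnPl-delta} verbatim, the only substitution being that the underlying semigroup $\{G(t)\}$ is now the $\delta'$-semigroup from Theorem \ref{cauchy2}.
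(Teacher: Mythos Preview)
Your proposal is correct and follows essentially the same approach the paper indicates: the authors omit the proof entirely, stating only that it proceeds via the contraction mapping principle exactly as in Theorem 2.7 of \cite{AnPl-delta}, with the $\delta'$-semigroup of Theorem \ref{cauchy2} replacing its $\delta$-counterpart. Your write-up in fact supplies more detail than the paper does, in particular the observation that the Sobolev embedding $H^1(E_j)\hookrightarrow L^\infty(E_j)$ is what makes the Nemitsky map $u\mapsto\sin u$ smooth from $H^1(\mathcal Y)$ to $L^2(\mathcal Y)$ and hence yields the claimed $C^2$ dependence on data.
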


\subsubsection{The kink solution with  specific profile on a $\mathcal Y$-junction }
\label{secprofilesI}

We will consider  a specific class of stationary profiles $\Psi_{\lambda, \delta'}$ for the  sine-Gordon equation \eqref{sg2} on a $\mathcal Y$-junction  with profiles determined by  formulae \eqref{trav22} and satisfying the boundary conditions \eqref{bcIfam} of $\delta'$-type and intensity $\lambda \in \R$ at the vertex. In this fashion, we reckon that they belong to the $\delta'$-interaction type domain in \eqref{3trav23} with $c_j>0$ and $\lambda\in \mathbb R$. Here we will consider the {\it continuity case}, namely, 
\begin{equation}
\label{contcase}
\phi_2(0+)= \phi_3(0+).
\end{equation}
Consequently we obtain the condition $a_2/c_2=a_3/c_3$. Now, the conditions \eqref{bcIfam} for the family yield
\begin{equation}\label{deriva3}
\cosh(a_1/c_1)=\cosh(a_2/c_2).
\end{equation}
Thus, we obtain $a_1/c_1=\pm a_2/c_2$. Moreover, for $y=e^{a_2/c_2}$ and $y_1=e^{a_1/c_1}$ there holds
\[
(c_2+c_3)\arctan(y) +c_1\arctan(1/y_1) =-\lambda \frac{y_1}{1+ y_1^2}.
\]

For concreteness, in what follows we study the case $a_1/c_1= - a_2/c_2$ (see Remark \ref{cj} below on the remaining cases). Thus, we obtain the relation
\begin{equation}\label{deriva4}
\arctan(y)\sum_{j=1}^3c_j=-\lambda \frac{y}{1+y^2},\quad y> 0.
\end{equation}
Therefore, we conclude that, necessarily, $\lambda\in \Big(-\infty, -\sum_{j=1}^3c_j \Big)$.
Moreover, from the strictly-increasing property of the positive function, $y \mapsto \frac{1+ y^2}{y}\arctan(y)$, $y>0$, we obtain from \eqref{deriva4} the existence of a smooth shift-map (also real analytic), $
\lambda\in (-\infty, -\sum_{j=1}^3c_j)\mapsto a_2(\lambda)$
satisfying \eqref{deriva4}, and such that the mapping 
$$
\lambda\in \Big(-\infty, -\sum_{j=1}^3c_j\Big)\mapsto \Psi_{\lambda, \delta'}=(-\phi_{1,a_1(\lambda)}, \phi_{2, a_2(\lambda)}, \phi_{3, a_3(\lambda)},0,0,0),
$$
 represents a real-analytic family of static profiles for the sine-Gordon equation on a $\mathcal Y$-junction of first-type satisfying  $\delta'$-interaction type  at the vertex $\nu=0$.

Hence we obtain, for $a_i=a_i(\lambda)$ and $\phi_i=\phi_{i,a_i(\lambda)}$, the following behavior:
\begin{enumerate}
\item[1)] for  $\lambda\in (-\infty, -\frac{\pi}{2} \sum_{j=1}^3 c_j)$ we obtain $a_2>0$:  therefore $a_3>0$, $a_1<0$,  $ \phi'_i<0$ and $\phi_i''(a_i) =0$, for $i=1,2,3$. Moreover, $\phi_i\in (0, \eta)$, $i=2, 3$, $-\phi_1\in (-\eta, 0)$, with $\eta=4 \arctan \big(e^{a_2/c_2}\big)>\pi$. Thus, the profile of $(-\phi_1, \phi_2,\phi_3)$, looks similar to the one shown in Figure \ref{figBumpYI} below (bump-type profile); 
\item[2)] for   $\lambda\in (-\frac{\pi}{2} \sum_{j=1}^3 c_j, -\sum_{j=1}^3 c_j)$ we obtain $a_2<0$:  therefore $a_3<0$, $a_1>0$,   $ \phi'_i<0$ and $ \phi''_i>0$ for $i=2,3$,  $\phi'_1>0$ and $\phi''_1>0$. $\phi_j\in (0, \pi)$ for every $j$. Thus, the profile of $(-\phi_1, \phi_2,\phi_3)$ looks similar to that in Figure \ref{figTailYI} below (tail-type profile); 
\item[3)]  the case  $\lambda=-\frac{\pi}{2} \sum_{j=1}^3 c_j$ implies $a_1=a_2=a_3=0$: Therefore,  $\phi_1(0)=\phi_2(0) =\phi_3(0)=\pi$. Moreover, 
$\phi_i''(0) =0$, $i=1,2, 3$. Thus, the profile of $(-\phi_1, \phi_2,\phi_3)$ looks similar to that in Figure \ref{figZeroWYI}.
\end{enumerate}

\begin{remark}
\label{cj} 
It is to be observed that we have left open the description of other kink-soliton profiles not satisfying the continuity property \eqref{contcase} at zero for the components $\phi_2, \phi_3$, as well as the case where $a_1/c_1=a_2/c_2$. These other profiles, however, can be studied following the spectral methods described here.
\end{remark}

\begin{figure}[t]
\begin{center}
\subfigure[$\lambda\in (-\infty, -\frac{\pi}{2} \sum_{j=1}^3 c_j)$]{\label{figBumpYI}\includegraphics[scale=.4, clip=true]{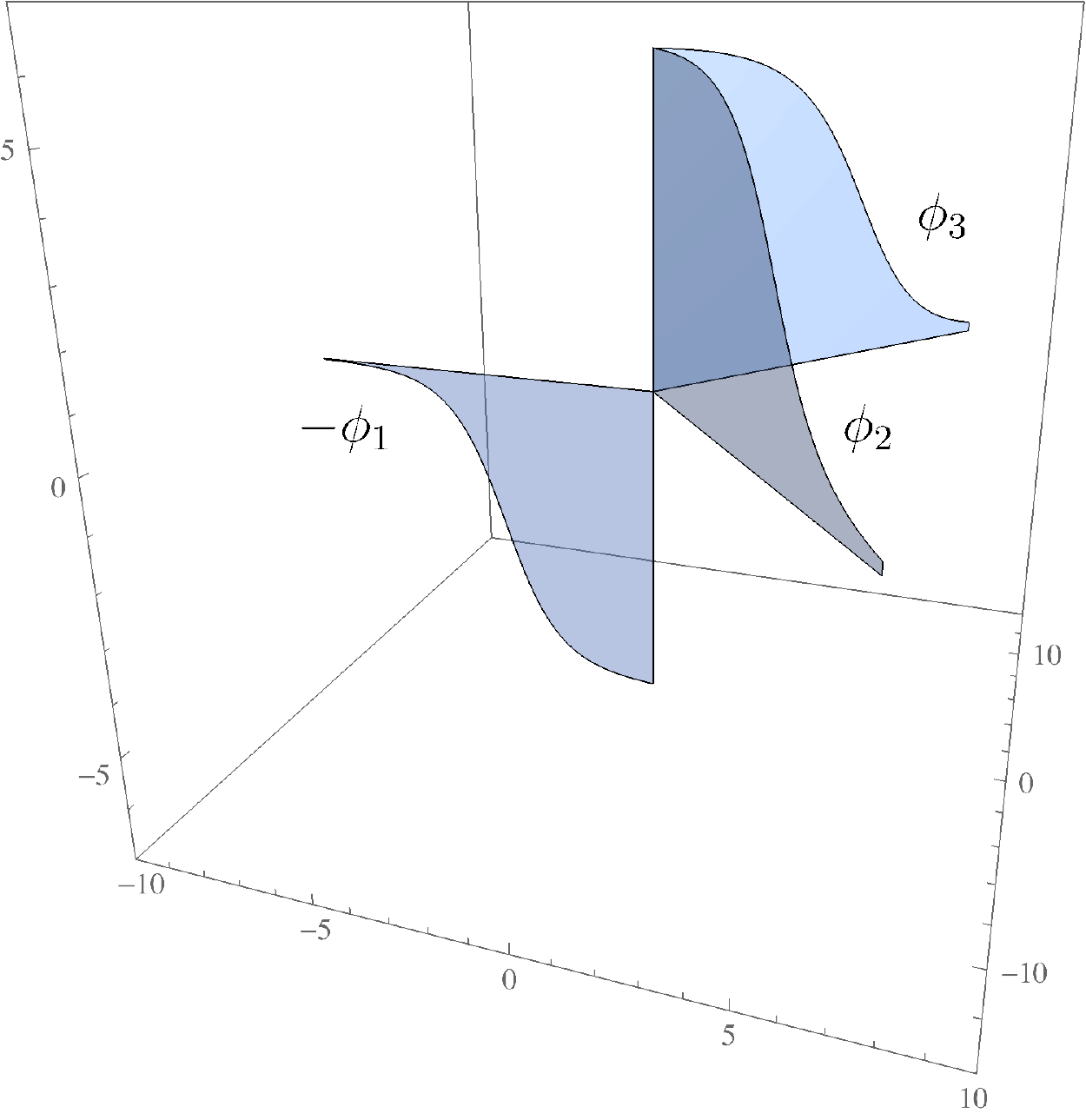}}
\subfigure[$\lambda\in (-\frac{\pi}{2} \sum_{j=1}^3 c_j, -\sum_{j=1}^3 c_j)$]{\label{figTailYI}\includegraphics[scale=.4, clip=true]{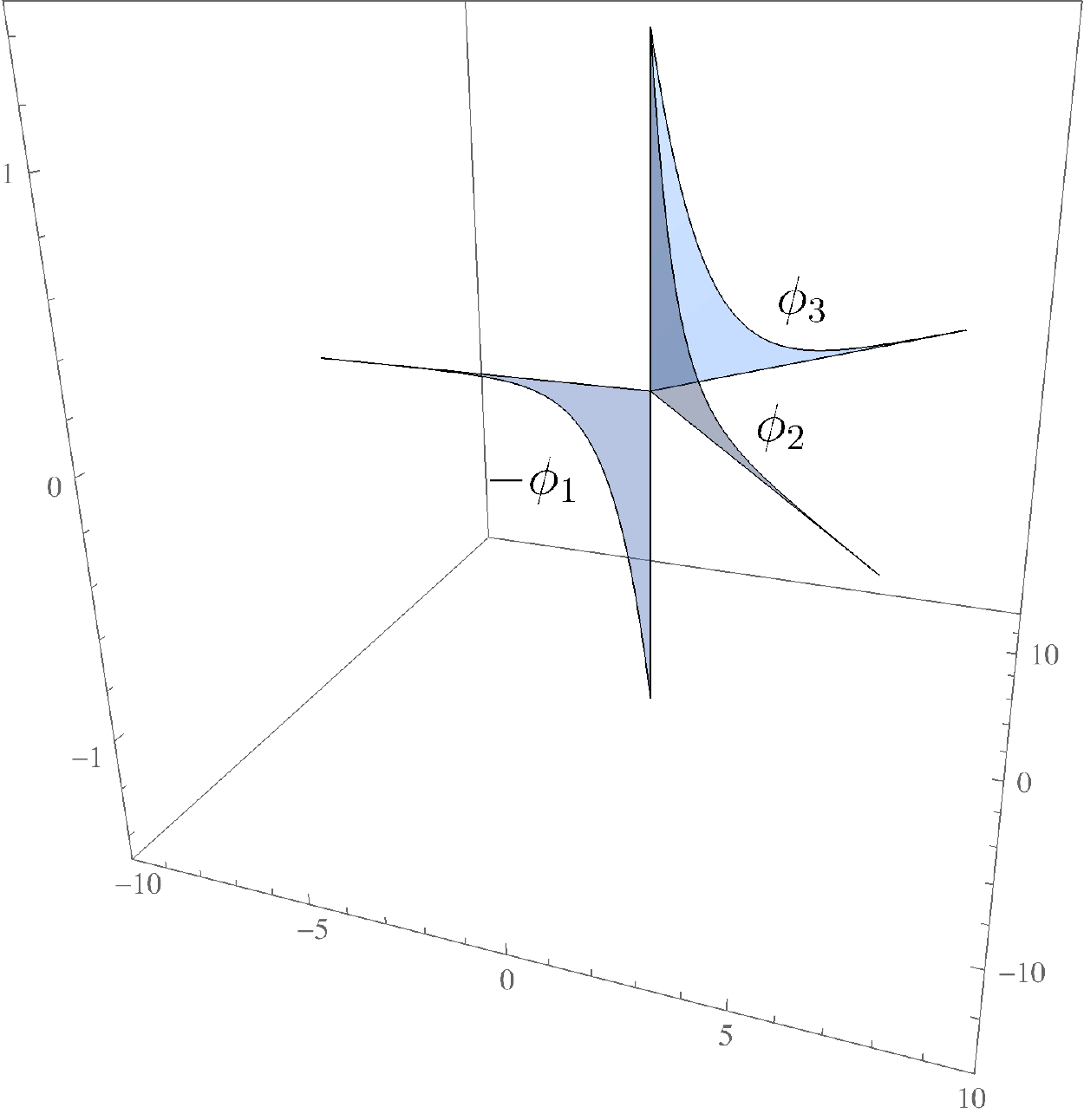}}
\subfigure[$\lambda=-\frac{\pi}{2} \sum_{j=1}^3  c_j $]{\label{figZeroWYI}\includegraphics[scale=.4, clip=true]{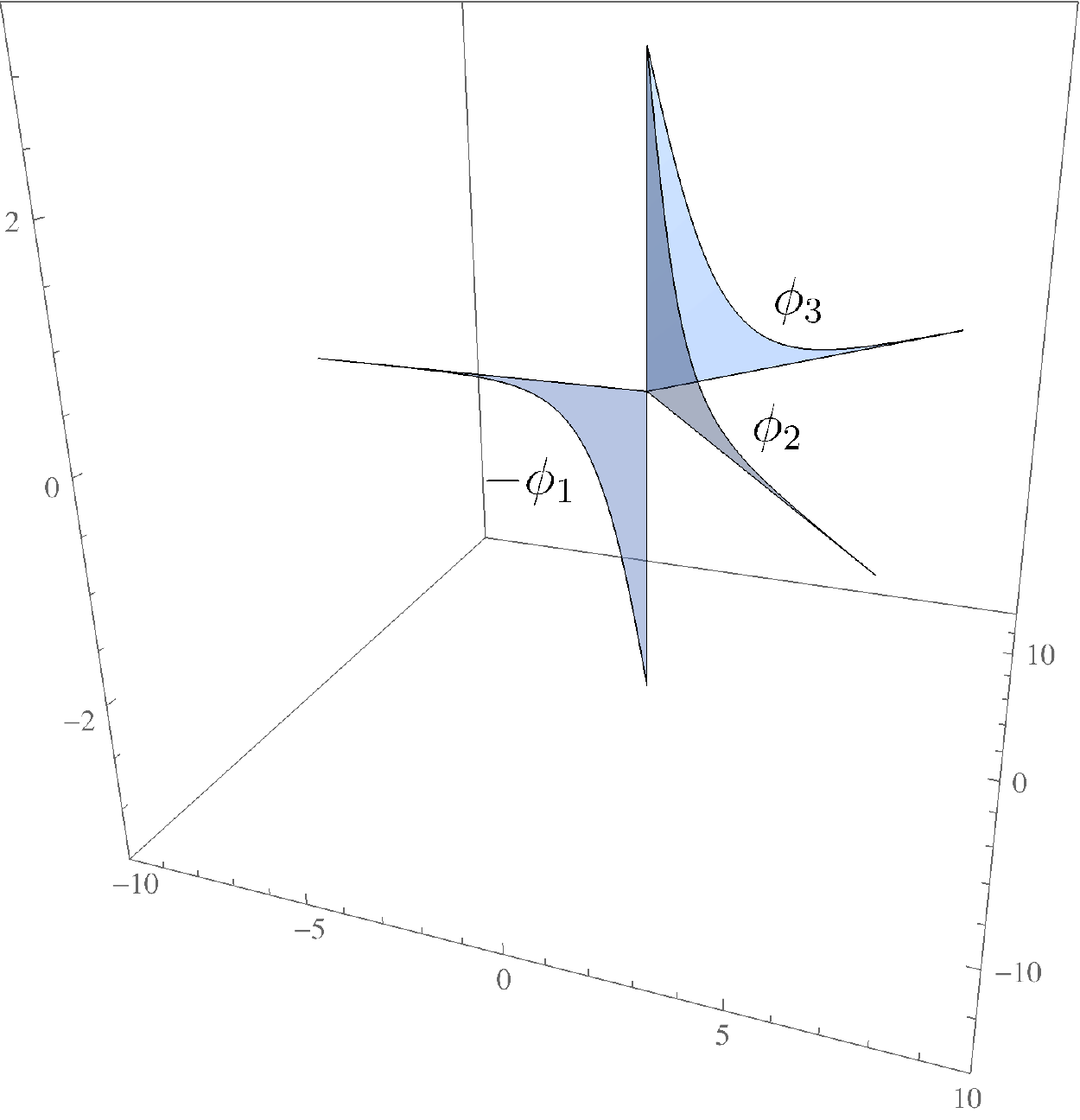}}
\end{center}
\caption{\small{Plots of stationary solutions $(-\phi_1, \phi_2, \phi_3)$ defined in \eqref{trav22} in the case where $c_j = 1$ for all $j=1,2,3$, for different values of $\lambda \in (\infty, -\sum_j c_j) = (-\infty, -3)$. Panel (a) shows the stationary profile solutions (``bump-type'' configuration) for the case $\lambda \in (-\infty,-3\pi/2)$. Panel (b) shows the profile of\/ ``tail-type'' for the case $\lambda \in (-3\pi/2,-3)$. Panel (c) shows the profile solutions when $\lambda = -3\pi/2$ (color online).}}\label{figKsolitons}
\end{figure}

Our instability result for the stationary profiles
$\Psi_{\lambda,\delta'}=(-\phi_1, \phi_2, \phi_3, 0,0,0)$ (with a slight abuse of notation) with $\phi_i=\phi_{i, a_i(\lambda)}$ defined in \eqref{trav22}-\eqref{deriva4} via $a_2(\lambda)$ and such that for $a_i=a_i(\lambda)$,
\begin{equation}
\label{ajs}
a_3 = \frac{c_3}{c_2} a_2,\;\;\text{and }\;\; 
a_1 = -\frac{c_1}{c_2} a_2,\quad c_i>0,
\end{equation}
is the following
\begin{theorem}
\label{2main} 
Let $\lambda\in  (-\infty, -\sum_{j=1}^3 c_j)$, $c_j>0$, and the smooth family of stationary profiles $\lambda\to \Psi_{\lambda,\delta'}$ determined above. Then $\Psi_{\lambda, \delta'}$ is spectrally and nonlinearly unstable for the sine-Gordon model \eqref{sg2} on a $\mathcal Y$-junction  of first type in the following cases:
\begin{enumerate}
\item[(i)] for $\lambda\in  (-\frac{\pi}{2} \sum_{j=1}^3 c_j, -\sum_{j=1}^3 c_j)$ and the constants $a_i$ and $c_i$ satisfying \eqref{ajs},
\item[(ii)]   for $\lambda\in (-\infty,-\frac{\pi}{2} \sum_{j=1}^3 c_j]$ and $c_1=c_2=c_3$ in \eqref{ajs}.
\end{enumerate}
\end{theorem}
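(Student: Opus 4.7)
The plan is to apply the linear instability criterion of Theorem \ref{crit} to the linearized matrix operator $\mathcal{L}$ of \eqref{trav23} posed on the $\delta'$-domain $D_{\mathrm I}(\mathcal F_\lambda)$, and then to promote spectral instability to nonlinear instability via the local well-posedness established in subsection \ref{seclocalWP}. Thus the proof reduces to verifying hypotheses $(S_1)$--$(S_3)$ for the smooth family $\Psi_{\lambda,\delta'}$.

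Hypotheses $(S_1)$ and $(S_2)$ should be the easy steps. For $(S_1)$ I would note that $J\mathcal E = JE+\mathcal T$ with $\mathcal T$ bounded on $H^1(\mathcal Y)\times L^2(\mathcal Y)$, so Theorem \ref{cauchy2} combined with the standard bounded-perturbation theorem for $C_0$-semigroups yields the semigroup generation of $J\mathcal E$. For $(S_2)$, the diagonal multiplication operator $\mathbf v\mapsto(\cos(\phi_j)v_j)$ is bounded and self-adjoint, so $\mathcal L = \mathcal F_\lambda + V$ inherits self-adjointness on $D_{\mathrm I}(\mathcal F_\lambda)$ from $\mathcal F_\lambda$ via Proposition \ref{2M}.

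For the spectral-gap part of $(S_3)$, the fact that $\cos\phi_j(x)\to 1$ as $|x|\to\infty$ on each edge, together with a Weyl-type argument (as in Theorem \ref{spectrum2}), gives $\sigma_{\mathrm{ess}}(\mathcal L) = [1,+\infty)$, so any $r_0\in (0,1)$ suffices. Invertibility of $\mathcal L$ should follow by noting that differentiating \eqref{trav21} yields $L_j\phi_j' = 0$ on each edge, where $L_j := -c_j^2\partial_x^2+\cos\phi_j$; since the companion solution of $L_j u = 0$ grows exponentially at the infinite end of $E_j$, every $\mathbf u\in \ker\mathcal L$ must take the form $u_j = \alpha_j\phi_j'$, and substituting into the $\delta'$-conditions \eqref{bcI} reduces $\ker\mathcal L$ to the kernel of an explicit $3\times 3$ linear system in $(\alpha_1,\alpha_2,\alpha_3)$ with entries depending on $\phi_j'(0)$, $\phi_j''(0)$, $c_j$ and $\lambda$; the determinant should be non-zero on the parameter regimes $(i)$ and $(ii)$ via the relations \eqref{ajs} and \eqref{deriva4}.

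The main obstacle is the Morse index calculation $n(\mathcal L) = 1$, for which I would use the extension theory of symmetric operators together with Sturm--Liouville oscillation on each edge. The idea is to start from the decoupled reference $\mathcal L^0 := \bigoplus_{j=1}^3 L_j^D$ with Dirichlet at the vertex: on each edge $\phi_j'$ (resp.\ $-\phi_1'$) is a sign-definite $L^2$-solution of $L_j u = 0$, so the Allegretto--Piepenbrink theorem forces $L_j^D\geq 0$, and since $\phi_j'(0)\neq 0$ excludes $0$ as a Dirichlet eigenvalue one obtains $n(\mathcal L^0) = 0$. The operator $\mathcal L$ is a finite-rank self-adjoint extension of the common symmetric restriction of $\mathcal L^0$ to the $\delta'$-compatible core, and by a Krein--Birman--Vishik--Grubb-type interlacing formula (of the sort used in Appendix \S\ref{secMorse}) the difference $n(\mathcal L) - n(\mathcal L^0)$ equals the negative inertia of an explicit boundary matrix built from the vertex data of $\phi_j$, $c_j$ and $\lambda$. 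The restriction to regime $(i)$ (tail profile, each $\phi_j$ monotone on its edge) and to regime $(ii)$ with $c_1 = c_2 = c_3$ (bump profile with reflection symmetry across the $E_2\leftrightarrow E_3$ exchange) is precisely what makes this inertia equal to one: in $(ii)$ the equality of speeds reduces the boundary matrix to block form via an even/odd decomposition whose negative index is read off directly, while in $(i)$ the monotonicity of $\phi_j$ on each edge makes a direct sign analysis feasible. Once $n(\mathcal L) = 1$ is established, Theorem \ref{crit} produces a positive real eigenvalue of $J\mathcal E$, hence spectral instability of $\Psi_{\lambda,\delta'}$; a standard semilinear argument (cf.\ \cite{ShaStr00,Lopes}) combined with the $C^2$ data-to-solution map provided by Theorem \ref{well2} then upgrades this to nonlinear instability in $H^1(\mathcal Y)\times L^2(\mathcal Y)$.
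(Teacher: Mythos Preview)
Your verification of $(S_1)$, $(S_2)$, the essential spectrum, and the triviality of $\ker\mathcal L$ via $u_j=\alpha_j\phi_j'$ is exactly what the paper does (Propositions \ref{3main} and \ref{2M}, Theorem \ref{cauchy2}). For case $(i)$ your Morse-index strategy is a genuine variant: the paper does \emph{not} compare with the decoupled Dirichlet operator. Instead it takes the symmetric restriction $\mathcal H_0$ with domain $\{c_1v_1'(0-)=c_2v_2'(0+)=c_3v_3'(0+)=0,\ \sum_{j\geq2}c_jv_j(0+)-c_1v_1(0-)=0\}$, which has deficiency indices $n_\pm=1$, shows $\mathcal H_0\geq 0$ via the factorization $L_j\psi=-\tfrac{1}{\phi_j'}\tfrac{d}{dx}\big[c_j^2(\phi_j')^2\tfrac{d}{dx}(\psi/\phi_j')\big]$ and a sign check on the resulting boundary term (this is where the tail condition $\phi_j''(0)\geq0$ is used), and concludes $n(\mathcal L_\lambda)\leq1$ from Proposition \ref{semibounded}. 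The lower bound comes from testing against the profile itself, $\langle\mathcal L_\lambda\Psi_{\lambda,\delta'},\Psi_{\lambda,\delta'}\rangle<0$. Your Dirichlet/boundary-matrix route could in principle recover the same conclusion, but the deficiency jump is $3$ rather than $1$, so you must actually compute the inertia rather than just bound it.

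For case $(ii)$ there is a genuine gap in your plan. Your claim that the $E_2\leftrightarrow E_3$ even/odd split yields $n(\mathcal L)=1$ on the full space is false: for $\lambda<-\tfrac{\pi}{2}\sum_jc_j$ the paper observes (Remarks \ref{fora} and \ref{extension2}) that on the even subspace $\mathcal C_1=\{u_2=u_3\}$ one already has $n(\mathcal L_\lambda|_{\mathcal C_1})=2$, so $n(\mathcal L_\lambda)\geq2$ on $L^2(\mathcal Y)$ and Theorem \ref{crit} cannot be applied there. The paper's remedy is to pass to the \emph{smaller} invariant subspace $\mathcal C_2=\{(u_j):\,-u_1(-x)=u_2(x)=u_3(x)\}$ (which requires $c_1=c_2=c_3$), prove that the resolvent and hence the semigroup $G(t)$ leave $\mathcal C_2$ invariant (Proposition \ref{semi2}), and then show $n(\mathcal L_\lambda|_{\mathcal C_2\cap D(\mathcal L_\lambda)})=1$ by analytic perturbation: at the threshold $\lambda_0=-\tfrac{\pi}{2}\sum_jc_j$ the two-dimensional kernel of $\mathcal L_{\lambda_0}$ intersects $\mathcal C_2$ trivially (Proposition \ref{6main}), and a Riesz-projection continuation argument propagates $n=1$ to all $\lambda<\lambda_0$ (Proposition \ref{7main}). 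Your proposal misses both the need to restrict to an invariant subspace and the perturbation/continuation step; without them the Morse-index hypothesis $(S_3)$ fails in regime $(ii)$.
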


The proof of Theorem \ref{2main} is splitted into two parts, pertaining to the cases $(i)$ and $(ii)$ described in its statement. The two cases are treated with different methodological approaches: case $(i)$ falls into the framework of extension theory, whereas case $(ii)$ requires an application of analytic perturbation theory.

\subsubsection{The spectral study in the case of $\lambda\in [-\frac{\pi}{2} \sum_{j=1}^3 c_j, -  \sum_{j=1}^3 c_j)$}

In this subsection we provide the spectral information about the family of self-adjoint operators $(\mathcal{L}_\lambda, D(\mathcal{L}_\lambda))$ where
\begin{equation}\label{deriva1}
\mathcal{L}_\lambda =\Big (\Big(-c_j^2\frac{d^2}{dx^2}+\cos(\phi_j)
\Big)\delta_{j,k} \Big ),\quad \l1\leqq j, k\leqq 3,
\end{equation}
associated to the linearization around the solutions $(\phi_j)_{j=1}$ determined in the previous subsection. Here $D(\mathcal{L}_\lambda)$ is the $\delta'$-interaction domain defined  in \eqref{3trav23} (see also Proposition \ref{2M} at Appendix \S A).

We begin by proving a result that applies to all values of $\lambda$ under consideration.
\begin{proposition}\label{3main}
Let $\lambda\in  (-\infty, -\sum_{j=1}^3 c_j)$, $\lambda\neq  -\frac{\pi}{2} \sum_{j=1}^3 c_j$. Then, $\ker( \mathcal{L}_\lambda )=\{\mathbf{0}\}$. Moreover, $\sigma_{\mathrm{ess}}(\mathcal{L}_\lambda)=[1,+\infty)$.
\end{proposition}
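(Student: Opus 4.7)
The plan is to prove the two assertions by rather different methods: the essential-spectrum claim reduces to a relatively-compact perturbation on top of Theorem \ref{spectrum2}, while the triviality of the kernel is an ODE argument combined with a transcendental inequality. For the essential spectrum, I would decompose $\mathcal{L}_\lambda = (\mathcal{F}_\lambda + 1) + W$, where $W$ denotes multiplication by the diagonal matrix $\mathrm{diag}(\cos\phi_j - 1)_{j=1}^3$. Since \eqref{trav22} shows $\phi_j(x)\to 0$ exponentially as $|x|\to\infty$ on each edge, the entries of $W$ decay exponentially, and hence $W$ is a relatively compact perturbation of the self-adjoint operator $\mathcal{F}_\lambda+1$. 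Weyl's stability theorem together with $\sigma_{\mathrm{ess}}(\mathcal{F}_\lambda+1)=[1,+\infty)$, which follows from Theorem \ref{spectrum2}, then yields $\sigma_{\mathrm{ess}}(\mathcal{L}_\lambda)=[1,+\infty)$.

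For $\ker(\mathcal{L}_\lambda)$ I would argue edge-by-edge. Differentiating the stationary equation $-c_j^2\phi_j''+\sin\phi_j=0$ shows that $\phi_j'$ solves the homogeneous linear ODE $-c_j^2 v''+\cos(\phi_j)v=0$, and from \eqref{trav22} the explicit form is $\phi_1'(x)=(2/c_1)\sech((x-a_1)/c_1)$ and $\phi_j'(x)=-(2/c_j)\sech((x-a_j)/c_j)$ for $j=2,3$. Since $\cos\phi_j(x)\to 1$ at the non-vertex infinity of each edge, the second linearly independent solution of the ODE grows exponentially there, so any $v\in\ker(\mathcal{L}_\lambda)\subset L^2(\mathcal{Y})$ must satisfy $v_j=\alpha_j\phi_j'$ on $E_j$ for some $\alpha_j\in\mathbb{R}$, $j=1,2,3$.

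It remains to impose the two $\delta'$-conditions defining $D_{\mathrm{I}}(\mathcal{F}_\lambda)$. Setting $s:=a_2(\lambda)/c_2$ and using \eqref{ajs} to evaluate $\phi_j'(0\pm)$ and $\phi_j''(0\pm)$ at the vertex, the flux-continuity identity $c_1 v_1'(0-)=c_j v_j'(0+)$ simplifies, after cancelling the common nonzero factor $-2\sech(s)\tanh(s)$ (nonzero because $s\ne 0$, equivalently $\lambda\ne -\tfrac{\pi}{2}\sum_{j=1}^3 c_j$), to the statement that $\alpha_j/c_j$ is independent of $j$; thus $\alpha_j=\beta c_j$ for some $\beta\in\mathbb{R}$. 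The remaining Kirchhoff-type condition then collapses to the scalar identity $\sum_{j=1}^3 c_j=\lambda\tanh(s)$. Substituting the profile-defining relation \eqref{deriva4}, which in the form $\lambda=-\bigl((1+y^2)\arctan(y)/y\bigr)\sum_{j=1}^3 c_j$ with $y=e^s$ eliminates $\lambda$, reduces the problem to $(1-y^2)\arctan(y)=y$. An elementary monotonicity check---the function $g(y):=(1-y^2)\arctan(y)-y$ satisfies $g(0)=0$ and $g'(y)=-2y\arctan(y)-2y^2/(1+y^2)<0$ on $(0,\infty)$---shows this equation has no positive root, forcing $\beta=0$ and $v\equiv 0$. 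The main obstacle is precisely this final transcendental step: one cannot treat $\lambda$ as a free parameter, but must exploit its functional dependence on $s$ coming from \eqref{deriva4} to rule out a resonance between the Kirchhoff-flux condition and the eigenvalue equation.
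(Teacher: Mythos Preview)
Your proof is correct and follows essentially the same route as the paper: the edgewise identification $v_j=\alpha_j\phi_j'$ via Sturm--Liouville theory, the flux-continuity reduction to $\alpha_j/c_j=\mathrm{const}$, and the Kirchhoff condition leading (after substituting \eqref{deriva4}) to the transcendental obstruction $(1-y^2)\arctan y=y$, which you then exclude by the monotonicity of $g(y)=(1-y^2)\arctan y - y$. The only cosmetic difference is that the paper splits into two cases: for $\lambda<-\tfrac{\pi}{2}\sum c_j$ (bump regime, $y>1$) it short-circuits the argument with a direct sign inspection of $\phi_2'(0)$ and $\phi_2''(0)$, and only invokes the transcendental analysis for the tail regime $y\in(0,1)$; your unified derivative computation $g'(y)=-2y\arctan y-2y^2/(1+y^2)<0$ on $(0,\infty)$ handles both ranges at once. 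The essential-spectrum argument via Weyl's theorem is the same in both.
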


\begin{proof} Let $\bold{u}=(u_1, u_2, u_3)\in D(\mathcal{L}_\lambda)$ and $\mathcal{L}_\lambda \bold{u}=\bold{0}$. Then, from Sturm-Liouville theory on half-lines (see \cite{BeSh}) one obtains
\begin{equation}\label{2spec}
u_1(x)=\alpha_1\phi'_1(x), \;\;x<0,\quad u_j(x) = \alpha_j\phi'_j(x), \;\;x>0,\;\; j=2,3,
\end{equation}
for some $\alpha_1$ and $\alpha_j$, $j = 2,3$. Since $a_1/c_1 = - a_2/c_2 = - a_3/c_3$ we obtain $\phi_1(0-)=\phi_2(0+)=\phi_3(0+)$, and so $c_2^2 \phi''_2(0+)=c_3^2 \phi''_3(0+)$. Therefore, from the conditions on $\phi'_j(0)$ we deduce $
c_2\alpha_2  \phi''_2(0+)=c_3\alpha_3  \phi''_3(0+)$, 
and so $c_3\alpha_2  \phi''_3(0+)=c_2\alpha_3  \phi''_3(0+)$. 
Since $\phi''_3(0+)\neq 0$ we get $\alpha_2/c_2 = \alpha_3/c_3$. Similarly, we have $\alpha_2/c_2 = \alpha_1/c_1$. Next, the jump condition  implies 
\begin{equation}\label{-2spec}
\phi'_2(0)\sum_{j=1}^3 \alpha_j =\phi'_2(0)\frac{\alpha_1}{c_1}\sum_{j=1}^3 c_j =
\lambda  \alpha_2 \phi''_2(0).
\end{equation}
Now, we suppose $\alpha_2\neq 0$. Thus,
\begin{equation}\label{3spec}
\phi'_2(0)\frac{1}{c_2}\sum_{j=1}^3 c_j =
\lambda   \phi''_2(0).
\end{equation}
 Let us to consider the following cases:
\begin{enumerate}
\item[1)] Let $\lambda\in (-\infty,  -\frac{\pi}{2} \sum_{j=1}^3 c_j)$.  Then, the profile of $\phi_2$ satisfies  $\phi_2'(0)<0$ and it is of bump-type and so $\phi_2''(0)<0$. Therefore, from \eqref{3spec} we get a contradiction.
\item[2)]  Let $\lambda\in (-\frac{\pi}{2} \sum_{j=1}^3 c_j, -\sum_{j=1}^3 c_j)$. In this case, $\phi_2$ has a tail-type profile. Next, from the explicit formula for $\phi_2$ in \eqref{trav22}, \eqref{3spec} and \eqref{deriva4} we get
\begin{equation}\label{4spec}
(1-y^2) \arctan(y)=y,\qquad y=e^{a_2/c_2}\in (0,1).
\end{equation}
We arrive at a contradiction, in view that $h(x)=(1-x^2)\arctan(x)-x$ is a negative strictly decreasing mapping on $(0,1)$. 
\end{enumerate}
Thus, from the two cases above we need to have $\alpha_2=0=\alpha_3=\alpha_1$. The statement $\sigma_{\mathrm{ess}}(\mathcal{L}_\lambda)=[1,+\infty)$ is an immediate consequence of Weyl's Theorem (cf. \cite{RS4}). This finishes the proof.
\end{proof}

\begin{proposition}\label{4main}
Let $\lambda_0=  -\frac{\pi}{2} \sum_{j=1}^3 c_j$. Then, $\dim(\ker( \mathcal{L}_{\lambda_0}))=2$.
\end{proposition}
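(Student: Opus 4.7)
The plan is to exploit the special structure at the critical value $\lambda_0 = -\tfrac{\pi}{2}\sum_{j=1}^3 c_j$. At this value, the matching relation \eqref{deriva4} forces $y = e^{a_2/c_2} = 1$, hence $a_1 = a_2 = a_3 = 0$ and $\phi_j(0) = \pi$ for all $j$. Substituting into the profile equation \eqref{trav21} yields $c_j^2 \phi''_j(0) = \sin\phi_j(0) = \sin\pi = 0$ for each $j$. The vanishing of the second derivatives at the vertex is precisely what creates the extra kernel dimension compared to Proposition \ref{3main}.

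First, I would argue as in Proposition \ref{3main}: by Sturm--Liouville theory on a half-line, for each edge the bounded/$L^2$ solution to $(-c_j^2 \partial_x^2 + \cos\phi_j)u_j = 0$ is, up to scaling, the decaying solution $\phi'_j$, since $\phi'_j \in L^2(E_j)$ decays exponentially at $\pm\infty$. Thus any $\mathbf{u} \in \ker(\mathcal{L}_{\lambda_0})$ must be of the form $u_j = \alpha_j \phi'_j$ on $E_j$, for scalars $\alpha_1, \alpha_2, \alpha_3 \in \mathbb{R}$.

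Next I would impose the $\delta'$-boundary conditions in \eqref{3trav23}. The flux-continuity condition $c_1 u'_1(0-) = c_2 u'_2(0+) = c_3 u'_3(0+)$ reduces to $c_j \alpha_j \phi''_j(0) = 0$ on each edge, and is therefore \emph{automatically satisfied} by the above observation that $\phi''_j(0) = 0$. This is the key gain at $\lambda_0$: two boundary constraints become trivial. It then remains to analyze the jump condition
\[
\sum_{j=2}^3 c_j u_j(0+) - c_1 u_1(0-) = \lambda_0 c_1 u'_1(0-).
\]
The right-hand side vanishes, again because $\phi''_1(0) = 0$. Using $\phi'_1(0) = 2/c_1$ and $\phi'_j(0) = -2/c_j$ for $j = 2, 3$, the jump relation collapses to the single linear equation $\alpha_1 + \alpha_2 + \alpha_3 = 0$.

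A single linear constraint on $(\alpha_1,\alpha_2,\alpha_3)\in\mathbb{R}^3$ yields a two-dimensional solution space, so $\dim\ker(\mathcal{L}_{\lambda_0}) = 2$, with an explicit basis given, for instance, by the choices $(\alpha_1,\alpha_2,\alpha_3) = (-1,1,0)$ and $(-1,0,1)$. I expect no genuine obstacle; the only care needed is to confirm that $\phi''_j(0) = 0$ holds simultaneously on all three edges (which follows from the common value $\phi_j(0) = \pi$ forced by $a_j = 0$) so that \emph{both} the flux-continuity condition and the right-hand side of the jump condition vanish at once.
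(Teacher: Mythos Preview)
Your proposal is correct and follows essentially the same route as the paper: represent any kernel element as $u_j=\alpha_j\phi'_j$ via Sturm--Liouville theory on half-lines, use $\phi''_j(0)=0$ at $\lambda_0$ to kill both the flux-continuity constraints and the right-hand side of the jump condition, and reduce to the single relation $\alpha_1+\alpha_2+\alpha_3=0$. The paper's proof is more terse---it simply invokes the relation \eqref{-2spec} already derived in Proposition~\ref{3main} and then exhibits the basis $\Phi_1=(-\phi'_1,\phi'_2,0)$, $\Phi_2=(0,\phi'_2,-\phi'_3)$---but the substance is identical, and your explicit verification that the flux-continuity condition becomes vacuous at $\lambda_0$ is a welcome clarification.
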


\begin{proof}
From \eqref{-2spec} we have $\alpha_1 + \alpha_2 +\alpha_3=0$ because of $\phi_2''(0+)=0$ and $\phi_2'(0+)\neq 0$. Then $\Phi_1=(-\phi'_1, \phi_2',0)$ and $\Phi_2=(0, \phi_2',-\phi'_3)$ belong to $D( \mathcal{F}_{\lambda_0})$ and $\Span\{\Phi_1, \Phi_2\}=\ker( \mathcal{L}_{\lambda_0})$.
\end{proof}

\begin{remark}\label{orthog} For $\Phi_1, \Phi_2$ in the proof of Proposition \ref{4main} we have the orthogonality relations $(\phi_1,\phi_2,\phi_3) \bot \Phi_1$ and   $(\phi_1,\phi_2,\phi_3)\bot \Phi_2$. Therefore, $(\phi_1,\phi_2,\phi_3)\in [\ker( \mathcal{L}_{\lambda_0})]^\bot$.
\end{remark}

\begin{proposition}\label{5main}
Let $\lambda\in [-\frac{\pi}{2} \sum_{j=1}^3 c_j, -\sum_{j=1}^3 c_j)$. Then $n( \mathcal{L}_\lambda )=1$.
\end{proposition}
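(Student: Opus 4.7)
I aim to prove $n(\mathcal L_\lambda)=1$ by establishing $n(\mathcal L_\lambda)\geqq 1$ via an explicit trial vector and $n(\mathcal L_\lambda)\leqq 1$ via a rank-one perturbation argument based on the quadratic-form structure of the $\delta'$-interaction.

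For the lower bound, the profile $\Psi_{\lambda,\delta'}=(-\phi_1,\phi_2,\phi_3)$ lies in $D(\mathcal L_\lambda)$ by construction, and using the stationary equation $c_j^2\phi_j''=\sin(\phi_j)$ componentwise one gets
\[
\langle \mathcal L_\lambda\Psi_{\lambda,\delta'},\Psi_{\lambda,\delta'}\rangle=\sum_{j=1}^{3}\int_{E_j}\phi_j\bigl(\phi_j\cos\phi_j-\sin\phi_j\bigr)dx.
\]
The scalar function $g(\phi):=\phi\cos\phi-\sin\phi$ satisfies $g(0)=0$ and $g'(\phi)=-\phi\sin\phi\leqq 0$ on $[0,\pi]$, so $g<0$ on $(0,\pi)$. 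Because $\phi_j(x)\in(0,\pi]$ on $E_j$ for every $\lambda$ in the prescribed range, the integrand is strictly negative on a set of positive measure and the whole expression is $<0$. By min-max, this forces $n(\mathcal L_\lambda)\geqq 1$.

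For the upper bound, integration by parts on $D(\mathcal L_\lambda)$ together with the $\delta'$-conditions \eqref{3trav23} yields the closed quadratic form, extending to $H^1(\mathcal Y)$,
\[
\langle \mathcal L_\lambda\mathbf u,\mathbf u\rangle=\sum_{j=1}^{3}\int_{E_j}\bigl(c_j^2|u_j'|^2+\cos(\phi_j)|u_j|^2\bigr)dx+\frac{1}{\lambda}\bigl|J(\mathbf u)\bigr|^2,
\]
with $J(\mathbf u)=\sum_{j=2}^{3}c_ju_j(0+)-c_1u_1(0-)$. Since $\lambda<0$, this realizes $\mathcal L_\lambda$ as a rank-one non-positive perturbation of the decoupled Neumann operator $M:=\bigoplus_{j=1}^{3}L_j^{\mathrm N}$, where $L_j^{\mathrm N}=-c_j^2\partial_x^2+\cos(\phi_j)$ on $E_j$ carries the boundary condition $u_j'(0)=0$; the rank-one min-max interlacing bound gives $n(\mathcal L_\lambda)\leqq n(M)+1$. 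It therefore suffices to show $n(M)=0$. At the endpoint $\lambda_0$ the profile has $\phi_j(0)=\pi$, whence $\phi_j''(0)=\sin(\pi)/c_j^2=0$, so the zero-energy solution $\phi_j'$ of $L_j$ (obtained by differentiating the profile equation) satisfies the Neumann condition and is of one sign on $E_j$; by the Sturm-Liouville theorem it is the ground state of $L_j^{\mathrm N}|_{\lambda_0}$ with eigenvalue $0$. For $\lambda\in(\lambda_0,-\sum c_j)$, the smooth parametrization $a_j=a_j(\lambda)$ of subsection~\ref{secprofilesI} shows that $\phi_j^{(\lambda)}(x)\in(0,\pi)$ is pointwise decreasing in $\lambda$, so $\cos(\phi_j^{(\lambda)}(x))$ is pointwise increasing in $\lambda$; by form monotonicity, every eigenvalue of $L_j^{\mathrm N}$ is non-decreasing in $\lambda$, keeping the ground state at or above $0$. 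Consequently $n(L_j^{\mathrm N})=0$ throughout the range, $n(M)=0$, and hence $n(\mathcal L_\lambda)\leqq 1$.

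The most delicate step is the identification of $\phi_j'$ as the ground state of $L_j^{\mathrm N}|_{\lambda_0}$ with eigenvalue $0$; this rests simultaneously on the coincidence $\phi_j''(0)=0$ (holding \emph{only} at the endpoint $\lambda=\lambda_0$, making $\phi_j'$ Neumann-admissible there) and on its non-vanishing on $E_j$, and serves as the anchor for the monotonicity argument at every other $\lambda$ in the range. Compatibility with Proposition \ref{4main} at $\lambda_0$ is automatic: although $\ker(M)|_{\lambda_0}$ is three-dimensional (spanned by $(\phi_j'\,\delta_{j,k})_{k=1}^{3}$), the rank-one form $|J(\cdot)|^2$ restricted to this subspace still has rank one because $J$ takes the common value $\pm c_1\phi_1'(0-)$ on each natural basis vector, so exactly one direction acquires a strictly negative eigenvalue under the perturbation and the other two remain in the kernel---yielding $n(\mathcal L_{\lambda_0})=1$ and $\dim\ker(\mathcal L_{\lambda_0})=2$ in agreement with Proposition \ref{4main}.
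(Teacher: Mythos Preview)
Your lower bound $n(\mathcal L_\lambda)\geqq 1$ is exactly the paper's argument: the same trial vector $\Psi_{\lambda,\delta'}$ and the same inequality $\phi\cos\phi<\sin\phi$ on $(0,\pi]$.

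For the upper bound you take a genuinely different route. The paper works with the \emph{minimal} symmetric operator $\mathcal H_0$ (domain \eqref{6spec}, deficiency indices $1$) and shows $\mathcal H_0\geqq 0$ via the factorization identity \eqref{spec7}, $L_j\psi=-\frac{1}{\phi_j'}\partial_x\bigl[c_j^2(\phi_j')^2\partial_x(\psi/\phi_j')\bigr]$; the boundary remainder $P$ in \eqref{7spec} is nonnegative precisely because $\phi_j''(0)\geqq 0$ in this $\lambda$-range, and Proposition~\ref{semibounded} then gives $n(\mathcal L_\lambda)\leqq 1$ directly for each $\lambda$. You instead pass to the decoupled Neumann operator $M=\bigoplus L_j^{\mathrm N}$, identify $\mathcal L_\lambda$ as $M$ plus a rank-one nonpositive form $\lambda^{-1}|J(\cdot)|^2$, and show $M\geqq 0$ by anchoring at $\lambda_0$ (where $\phi_j'$ is the Neumann ground state with eigenvalue $0$) and pushing the potential up via the monotonicity $\lambda\mapsto\cos(\phi_j^{(\lambda)})$. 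Both arguments are correct; yours avoids the von~Neumann/Krein machinery and makes the compatibility with Proposition~\ref{4main} transparent, while the paper's factorization handles every $\lambda$ in one stroke without needing to track how the profile varies with $\lambda$. Note also that since $D(\mathcal H_0)$ sits inside the Neumann domain, your $M\geqq 0$ actually implies the paper's $\mathcal H_0\geqq 0$, so the two upper-bound mechanisms are consistent rather than independent.
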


\begin{proof}  From Proposition \ref{2M} in Appendix \S A we have that the family $(\mathcal{L}_\lambda, D(\mathcal{L}_\lambda))$ represents all the self-adjoint extensions of the closed symmetric operator, $(\mathcal H_0, D(\mathcal H_0))$, where
\begin{equation}\label{6spec}
\mathcal{H}_0=\Big (\Big(-c_j^2\frac{d^2}{dx^2}+\cos(\phi_j)
\Big)\delta_{j,k} \Big ),\;\l1\leqq j, k\leqq 3,\quad D(\mathcal{H}_0)= D(\mathcal{H})
\end{equation}
and $n_{\pm}(\mathcal{H}_0)=1$. Next, we show that  $\mathcal{H}_0\geqq 0$. If we denote $L_j=-c_j^2\frac{d^2}{dx^2}+\cos(\phi_j)$ then from \eqref{trav21} we obtain 
\begin{equation}
\label{spec7}
L_j\psi=-\frac{1}{\phi'_j} \frac{d}{dx}\Big[c_j^2  (\phi'_j)^2 \frac{d}{dx}\Big(\frac{\psi}{\phi'_j}\Big)\Big],
\end{equation}
for any $\psi$. It is to be observed that  $\phi'_j\neq 0$. By using formula in \eqref{spec7} we have for any $\Lambda=(\psi_j)\in D(\mathcal{H}_0)$,
\begin{equation}\label{7spec}
\begin{split}
\langle \mathcal{H}_0 \Lambda,\Lambda\rangle= A+c_1^2\psi^2_1(0)\frac{\phi''_1(0)}{\phi'_1(0)}-\sum_{j=2}^3c_j^2\psi^2_j(0)\frac{\phi''_j(0)}{\phi'_j(0)}\equiv A+P,
\end{split}
\end{equation}
where $A\geqq 0$ represents the integral terms. Next we show that $P\geqq0$. Indeed, since $\phi_j''(0)\geqq 0$, for every $j$, $\phi_1'(0)>0$, and  $\phi_j'(0)<0$, for $j=2,3$, we obtain immediately $P\geqq 0$. Then, $\mathcal{H}_0\geqq 0$.

Due to  Proposition \ref{semibounded} (see Appendix \S A),  $n(\mathcal{L}_\lambda)\leqq 1$. Next, for $\Psi_{\lambda, \delta'}=(-\phi_1, \phi_2, \phi_3)\in D(\mathcal{L}_\lambda)$ (with a slight abuse of notation) we obtain
\begin{equation}\label{negaqua}
\langle \mathcal{L}_\lambda \Psi_{\lambda, \delta'}, \Psi_{\lambda, \delta'}\rangle=\int_{-\infty}^0[-\sin(\phi_1)+\cos(\phi_1)\phi_1]\phi_1dx +\sum_{j=2}^3\int_0^{+\infty}[-\sin(\phi_j)+\cos(\phi_j)\phi_j]\phi_jdx<0,
\end{equation}
because of $0<\phi_j(x)\leqq \pi$ and $x\cos x\leqq \sin x$ for all $x\in [0, \pi]$.  Then from minimax principle we  arrive at $n(\mathcal{L}_\lambda)=1$. This finishes the proof.
\end{proof}

\begin{remark}\label{fora}
For the case $\lambda\in (-\infty, -\frac{\pi}{2} \sum_{j=1}^3 c_j)$ in Proposition \ref{3main}, the formula for $P$ in \eqref{7spec}  satisfies $P<0$. Therefore, it is not clear whether the extension theory approach provides an estimate of the Morse-index of $\mathcal{L}_\lambda$; see also the related Remark 4.5 in \cite{AnPl-delta}.
\end{remark}

\begin{proof}[Proof of Theorem \ref{2main} (case $-\frac{\pi}{2} \sum_{j=1}^3 c_j \leqq \lambda \leqq -\sum_{j=1}^3 c_j$)]
From Propositions \ref{3main} and \ref{5main} we have $\ker( \mathcal{L}_\lambda)=\{0\}$ and $n( \mathcal{L}_\lambda)=1$. Thus, from Theorem \ref{cauchy2} and Theorem \ref{crit} there follows the instability property of the stationary profile $\Psi_{\lambda, \delta'}=(-\phi_1, \phi_2, \phi_3,0,0,0)$. Now, since the mapping data-solution for the sine-Gordon model on $H^1(\mathcal Y)\times L^2(\mathcal Y)$ is at least of class $C^2$ (indeed, it is smooth) by Theorem \ref{well2}, it follows that the linear instability property of $\Psi_{\lambda, \delta'}$   is in fact  of nonlinear type  in the $H^1(\mathcal Y)\times L^2(\mathcal Y)$-norm (see Henry {\it{et al.}} \cite{HPW82}, Angulo and Natali \cite{AngNat16}, and Angulo {\it{et al.}} \cite{ALN08}). This finishes the proof.
\end{proof}

\subsubsection{The spectral study in the case $\lambda\in (-\infty, -\frac{\pi}{2} \sum_{j=1}^3 c_j)$}
 
 In this subsection we study in more details the Morse index of $\mathcal{L}_\lambda$ for $\lambda\in (-\infty, -\frac{\pi}{2} \sum_{j=1}^3 c_j)$ via analytic perturbation theory. Our analysis specializes to the case $c_1=c_2=c_3$ in \eqref{ajs}; thus, we consider the following closed subspace of $L^2(\mathcal Y)$,
 \begin{equation}\label{C}
 \mathcal C_2=\{(u_j)_{j=1}^3\in L^2(\mathcal Y): \text{for}\; u(x)\equiv -u_1(-x), x>0, \;\text{we have}\; u(x)=u_2(x)=u_3(x),\; x>0\}.
\end{equation}
We immediately note from \eqref{ajs} that the soliton-profile belongs to $ \mathcal{C}_2$, $\Psi_{\lambda, \delta'}=(-\phi_1, \phi_2, \phi_3)\in \mathcal C_2$, with $\phi_i=\phi_{i, a_i(\lambda)}$ and  $-a_1(\lambda)=a_2(\lambda)=a_3(\lambda)$, where $a_2(\lambda)$ is determined by \eqref{deriva4}. Our strategy here will be to apply the linear instability criterion in Theorem \ref{crit} within the space $(H^1(\mathcal Y)\cap \mathcal C_2)\times \mathcal C_2$. Thus we start with the verification of Assumption $(S_1)$.

\begin{proposition}\label{semi2} Let us consider the $C_0$-semigroup $\{G(t)\}_{t\geqq 0}$ on $H^1(\mathcal Y)\times L^2(\mathcal Y )$ defined by \eqref{FRW}. Then,
\begin{enumerate}
\item[1)] For all $t\geqq 0$, the subspace $(H^1(\mathcal Y)\cap \mathcal C_2)\times \mathcal C_2$ is invariant under $G(t)$. Moreover, the infinitesimal generator of $G(t)$ is the operator $\mathcal A=JE$ with $D(\mathcal A)=(D(\mathcal H_\lambda)\cap \mathcal C_2)\times \mathcal C_2$.

\item[2)] Assumption $(S_1)$ is satisfied; more precisely, the operator $J\mathcal E$ is the generator of a $C_0$-semigroup $\{S(t)\}_{t\geqq 0}$ on $(H^1(\mathcal Y)\cap \mathcal C_2)\times \mathcal C_2$  with $D(J\mathcal E)=(D(\mathcal H_\lambda)\cap \mathcal C_2)\times \mathcal C_2$.
\end{enumerate}
 \end{proposition}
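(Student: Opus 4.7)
My plan is to realize $\mathcal C_2$ as the joint $+1$-eigenspace of two commuting unitary involutions on $L^2(\mathcal Y)$, verify that these involutions preserve the $\delta'$-domain and commute with $\mathcal H_\lambda$ (which is where the hypothesis $c_1=c_2=c_3$ enters decisively), and transfer this commutation to the semigroup $G(t)$ via the resolvent formula \eqref{FRW}. The full generator $J\mathcal E$ is then handled by decomposing it as $JE$ plus the bounded multiplier $\mathcal T$ and invoking the bounded-perturbation theorem.

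First I would introduce on $L^2(\mathcal Y)$ the bounded operators
$$
T_1 \bold u = (u_1, u_3, u_2), \qquad (T_2 \bold u)(x) = \bigl(-u_2(-x),\, -u_1(-x),\, u_3(x)\bigr),
$$
for $\bold u=(u_1,u_2,u_3)$. A direct check shows $T_1,T_2$ are self-adjoint unitary involutions with $T_1 T_2=T_2 T_1$ and with joint $+1$-eigenspace equal to $\mathcal C_2$; the orthogonal projection $P=\tfrac14(I+T_1)(I+T_2)$ is therefore bounded, and extending $T_1,T_2$ diagonally to the phase space produces the bounded orthogonal projection $\mathbf P$ onto $(H^1(\mathcal Y)\cap \mathcal C_2)\times \mathcal C_2$. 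The key algebraic verification is that $T_1,T_2$ preserve $D(\mathcal H_\lambda)$ and commute with $\mathcal H_\lambda$. For $T_1$, the swap $2\leftrightarrow 3$ leaves the conditions in \eqref{3trav23} invariant because $c_2=c_3$. For $T_2$, the identities $(T_2\bold u)_1(0-)=-u_2(0+)$, $(T_2\bold u)_2(0+)=-u_1(0-)$, $(T_2\bold u)_1'(0-)=u_2'(0+)$, $(T_2\bold u)_2'(0+)=u_1'(0-)$, together with $c_1=c_2$, leave both the flux-continuity and the Kirchhoff-type jump condition invariant; since $d^2/dx^2$ commutes with $f\mapsto f(-\cdot)$ and with pointwise negation, the equality $T_k \mathcal H_\lambda=\mathcal H_\lambda T_k$ on $D(\mathcal H_\lambda)$ follows for $k=1,2$.

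The commutation $\mathbf P \mathcal A=\mathcal A \mathbf P$ on $D(\mathcal A)$ passes immediately to $R(\eta:\mathcal A)$ for every $\eta\in\rho(\mathcal A)$. Inserting this into the representation formula \eqref{FRW} and using boundedness of $\mathbf P$ to interchange projection and contour integration yields $\mathbf P G(t)=G(t)\mathbf P$ for all $t\geqq 0$, which is precisely the invariance of $(H^1(\mathcal Y)\cap \mathcal C_2)\times \mathcal C_2$ under $G(t)$. The restriction of $G(t)$ to this closed invariant subspace is a $C_0$-semigroup whose generator is the restriction of $\mathcal A$ to $D(\mathcal A)\cap [(H^1(\mathcal Y)\cap \mathcal C_2)\times \mathcal C_2]=(D(\mathcal H_\lambda)\cap \mathcal C_2)\times \mathcal C_2$, which proves (1). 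For (2), decompose $J\mathcal E=JE+\mathcal T$ as in the discussion preceding Theorem \ref{crit}, with $\mathcal T$ the bounded multiplier by $V_j=-\cos\phi_j$. Thanks to \eqref{ajs} with $c_1=c_2=c_3$, the profiles satisfy $\phi_1(x)=\phi_2(-x)$ for $x<0$ and $\phi_2=\phi_3$ on $(0,\infty)$, whence $V_1(x)=V_2(-x)$ and $V_2=V_3$. A direct check shows that multiplication by $V$ commutes with $T_1$ and $T_2$, hence with $\mathbf P$, so $\mathcal T$ preserves the invariant subspace. Since $JE$ restricts to a generator there by part~(1) and $\mathcal T$ is bounded and invariant, the bounded-perturbation theorem (cf.~\cite{Pa}) yields a $C_0$-semigroup $\{S(t)\}_{t\geqq 0}$ on $(H^1(\mathcal Y)\cap \mathcal C_2)\times \mathcal C_2$ with the stated domain, verifying Assumption $(S_1)$.

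The main obstacle is the algebraic check that $T_2$ preserves the $\delta'$-boundary conditions: the Kirchhoff-type jump rule in \eqref{3trav23} couples values and derivatives at the vertex with weights $c_j$, and the reflection-negation swap between edges $1$ and $2$ respects this coupling only under the equality $c_1=c_2$. This is the precise point at which the equal-speeds hypothesis of this subsection is indispensable; without it neither the invariance of the domain nor the commutation with $\mathcal H_\lambda$ survives.
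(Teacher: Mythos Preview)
Your strategy is sound and genuinely different from the paper's, but there is a concrete error in the execution. The involutions $T_1$ and $T_2$ do \emph{not} commute: a direct computation gives
\[
T_1T_2\,\bold u=(-u_2(-\cdot),\,u_3,\,-u_1(-\cdot)),\qquad T_2T_1\,\bold u=(-u_3(-\cdot),\,-u_1(-\cdot),\,u_2),
\]
so the group they generate is the symmetric group $S_3$ (with $(T_1T_2)^3=I$), not $\mathbb Z_2\times\mathbb Z_2$. Consequently $P=\tfrac14(I+T_1)(I+T_2)$ is neither idempotent nor self-adjoint, and your step ``$\mathbf P G(t)=G(t)\mathbf P$ is precisely the invariance of $(H^1(\mathcal Y)\cap\mathcal C_2)\times\mathcal C_2$'' does not follow as written. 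The fix is easy and does not require the projection at all: since each $T_k$ commutes with $\mathcal H_\lambda$ on $D(\mathcal H_\lambda)$ (your verification of this is correct and is where $c_1=c_2=c_3$ enters), each $T_k$ commutes with $R(-\eta^2:\mathcal H_\lambda)$ and hence with $R(\eta:\mathcal A)$ via \eqref{resolA}; then for $\Psi$ fixed by both $T_k$ one has $T_k R(\eta:\mathcal A)\Psi=R(\eta:\mathcal A)T_k\Psi=R(\eta:\mathcal A)\Psi$, so the resolvent preserves $\mathcal C_2$ directly, and \eqref{FRW} transports this to $G(t)$. Alternatively, replace your $P$ by the group average $\tfrac16\sum_{g\in S_3}g$.

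By contrast, the paper bypasses symmetries entirely: it writes $R(\eta:\mathcal A)$ in terms of $R(-\eta^2:\mathcal H_\lambda)$ and then verifies $R(-\eta^2:\mathcal H_\lambda)(\mathcal C_2)\subset D(\mathcal H_\lambda)\cap\mathcal C_2$ by computing the resolvent explicitly through Green's-function formulae on each half-line (together with the eigenfunction $\Phi_\lambda\in\mathcal C_2$ when $\lambda<0$). Your symmetry route is more conceptual and makes transparent exactly why equal speeds are needed; the paper's route is more hands-on but avoids any group-theoretic bookkeeping. For part~(2) both arguments coincide: decompose $J\mathcal E=JE+\mathcal T$ and invoke bounded perturbation after checking that $(-\cos\phi_j)$ respects $\mathcal C_2$, which in your language is the commutation of the multiplier with $T_1,T_2$ coming from $\phi_2=\phi_3$ and $\phi_1(\cdot)=\phi_2(-\cdot)$.
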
  
 \begin{proof}
\noindent 1) By the representation formula for $G(t)$ in \eqref{FRW}, it is sufficient to show that the resolvent operator for $\mathcal A$, $R(\eta:\mathcal A)$, satisfies $R(\eta:\mathcal A)((H^1(\mathcal Y)\cap \mathcal C_2)\times \mathcal C_2)\subset (D(\mathcal H_\lambda)\cap \mathcal C_2)\times \mathcal C_2$. Indeed, initially for $\lambda\in \mathbb R-\{0\}$ we can see, similarly as in the proof of Theorem 2.2 in \cite{AnPl-delta}, that for $\eta \in \mathbb C$ such that $-\eta^2 \in \rho(\mathcal H_\lambda)$ we obtain for $\Psi=(\bold u, \bold v)\in H^1(\mathcal Y)\times L^2(\mathcal Y)$ the representation
\begin{equation}\label{resolA}
R(\eta:\mathcal A)\Psi=\left(\begin{array}{c} -R(-\eta^2: \mathcal H_\lambda)(\eta \bold u+\bold v) \\
-\eta R(-\eta^2: \mathcal H_\lambda)(\eta \bold u+\bold v) -\bold u \end{array}\right),
\end{equation}
where  $R(-\eta^2: \mathcal H_\lambda)=(-\eta^2I_3- \mathcal H_\lambda)^{-1}: L^2(\mathcal Y)\to D(\mathcal H_\lambda)$. Thus, we only need to show that $R(-\eta^2: \mathcal H_\lambda)$ satisfies $R(-\eta^2: \mathcal H_\lambda)(\mathcal C_2)\subset D(\mathcal H_\lambda)\cap \mathcal C_2$. It is not difficult to see that $\mathcal H_\lambda(D(\mathcal H_\lambda)\cap \mathcal C_2)\subset \mathcal C_2$. Now, a explicit representation for $R(-\eta^2: \mathcal H_\lambda)$ for any $\eta>0$  (without loss of generality)  and $\lambda>0$ can be obtained via the following formulas: for $\bold u=(u_j)_{j=1}^3\in L^2(\mathcal Y)$ and $(\Phi_j)_{j=1}^3=(\mathcal H_\lambda +\eta^2I_3)^{-1} \bold u$ ($c_j>0$ without loss of generality)
\begin{enumerate}
\item[(a)] for $x<0$
\begin{equation}
\label{formu1}
\Phi_1(x)=(-c_1\frac{d^2}{dx^2}+\eta^2)^{-1}(u_1)(x)=\frac{d_1}{c_1}e^{\frac{\eta}{ \sqrt{c_1} }x} +\frac{1}{2 \sqrt{c_1} \eta}\int_{-\infty}^0 u_1(y) e^{-\frac{\eta}{ \sqrt{c_1} } |x-y|} dy
\end{equation}
\item[(b)] for $x>0$ and $j=2,3$,
\begin{equation}
\label{formu2}
\Phi_j(x)=(-c_j\frac{d^2}{dx^2}+\eta^2)^{-1}(u_j)(x)=\frac{d_j}{c_j}e^{-\frac{\eta}{ \sqrt{c_j} }x} +\frac{1}{2 \sqrt{c_j}\eta}\int_0^{\infty} u_j(y) e^{-\frac{\eta}{ \sqrt{c_j} } |x-y|} dy,
\end{equation}
where the constants $d_j=d_j(\eta, (\Phi_j))$ are chosen  such that $(\Phi_j)\in D(\mathcal H_{\lambda})$. Moreover, for $\lambda>0$, it is not difficult to see that for $(u_j)\in \mathcal C_2$ we obtain $(\Phi_j)\in \mathcal C_2$.
Next, for the case $\lambda<0$ we need to use Theorem \ref{spectrum2}. We note that the eigenfunction $\Phi_\lambda=(-e^{-\frac{\alpha}{c_1}x}, e^{\frac{\alpha}{c_1}x}, e^{\frac{\alpha}{c_1}x})$ for $\mathcal H_\lambda$ associated with the eigenvalue $\theta_0=-9c_1^2/\lambda^2$ and with $\alpha=3c_1/\lambda<0$, obviously belongs to $\mathcal C_2$ (we recall that $c_1=c_2=c_3>0$). Thus, by using similar formulae to those in \cite{AnPl-delta} (specifically, formulae (2.9) and (2.10) in that reference) we immediately obtain that $R(-\eta^2: \mathcal  H_\lambda)\bold u=(\Psi_j)_{j=1}^3\in \mathcal C_2$ with $\eta^2\neq -\theta_0$.
\end{enumerate} 
\noindent 2) Consider $J\mathcal E=J E+\mathcal M$ with
 \[
  \mathcal M=\left(\begin{array}{cc} 0& 0 \\(-\cos(\phi_j)\delta_{j,k})& 0\end{array}\right).
  \]
 We know that $ \mathcal M$ is a bounded operator on $H^1(\mathcal Y)\times L^2(\mathcal Y)$. Now, for $\bold u\in \mathcal C_2$ we can see that $(-\cos(\phi_j)\delta_{j,k})\bold u\in \mathcal C_2$ and so $\mathcal M: (H^1(\mathcal Y)\cap \mathcal C_2)\times \mathcal C_2\to (H^1(\mathcal Y)\cap \mathcal C_2)\times \mathcal C_2$ is well defined. Therefore, since $JE$ generates a  $C_0$-semigroup on $(H^1(\mathcal Y)\cap \mathcal C_2)\times  \mathcal C_2$, it follows from standard semigroup theory that $J\mathcal E$ also has this property (see Pazy \cite{Pa}).
\end{proof}

 \begin{proposition}\label{6main}
Let $\lambda_0=-\frac{\pi}{2} \sum_{j=1}^3 c_j$ and consider  $\mathcal B=\mathcal C_2 \cap D (\mathcal{L}_{\lambda_0})$.  Then $ \mathcal{L}_{\lambda_0}  : \mathcal B\to \mathcal C_2$ is well defined and  $\ker( \mathcal{L}_{\lambda_0} |_\mathcal B )=\{0\}$ and $n( \mathcal{L}_{\lambda_0} |_\mathcal B )=1$.  
\end{proposition}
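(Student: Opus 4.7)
The plan is to exploit the reflection symmetry $x\mapsto -x$ available at $\lambda_0$ (where $a_1=a_2=a_3=0$ and $c_1=c_2=c_3=c$) and to reduce the three claims about $\mathcal L_{\lambda_0}|_{\mathcal B}$ to the structure of $\mathcal L_{\lambda_0}$ on the full domain already obtained in Propositions \ref{4main} and \ref{5main}. In order, I would verify: (a) that $\mathcal L_{\lambda_0}$ sends $\mathcal B$ into $\mathcal C_2$, so that $\mathcal L_{\lambda_0}|_\mathcal B$ is well defined as an operator on $\mathcal C_2$; (b) that the intersection of the two-dimensional $\ker\mathcal L_{\lambda_0}$ with $\mathcal C_2$ is trivial; and (c) that the Morse index of the restriction equals one, via the min-max principle combined with a concrete negative direction inside $\mathcal C_2$.

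For step (a), the explicit formulas \eqref{trav22} with $a_j=0$ and $c_j=c$ give $\phi_1(-x)=\phi_2(x)=\phi_3(x)$ for every $x>0$, and hence $\cos\phi_1(-x)=\cos\phi_2(x)=\cos\phi_3(x)$. Given $\mathbf u\in \mathcal B$ satisfying $-u_1(-x)=u_2(x)=u_3(x)$ on $(0,+\infty)$, I would combine this with $\tfrac{d^2}{dx^2}[u_1(-x)]=u_1''(-x)$ to show that $\mathbf v=\mathcal L_{\lambda_0}\mathbf u$ satisfies $-v_1(-x)=v_2(x)=v_3(x)$, so $\mathbf v\in\mathcal C_2$. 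The only additional piece is to check that the $\delta'$-boundary conditions at $\nu=0$ remain consistent with the $\mathcal C_2$-reflection, which is immediate from $c_1=c_2=c_3$ and the symmetric form of the jump rule in \eqref{3trav23}.

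For step (b), the proof of Proposition \ref{4main} gives that every element of $\ker\mathcal L_{\lambda_0}$ is of the form $\mathbf u=(\alpha_1\phi_1',\alpha_2\phi_2',\alpha_3\phi_3')$ with $\alpha_1+\alpha_2+\alpha_3=0$. Imposing $u_2=u_3$ and using $\phi_2'=\phi_3'$ on $(0,+\infty)$ yields $\alpha_2=\alpha_3$, while $-u_1(-x)=u_2(x)$ together with the identity $\phi_1'(-x)=-\phi_2'(x)$ for $x>0$ (read off directly from \eqref{trav22} with $a_j=0$ and $c_j=c$) yields $\alpha_1=\alpha_2$. These three constraints force $\alpha_1=\alpha_2=\alpha_3=0$, so $\ker(\mathcal L_{\lambda_0}|_\mathcal B)=\{0\}$.

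For step (c), the min-max characterization of the Morse index together with $\mathcal B\subset D(\mathcal L_{\lambda_0})$ gives $n(\mathcal L_{\lambda_0}|_\mathcal B)\leq n(\mathcal L_{\lambda_0})=1$ by Proposition \ref{5main}. Conversely, $\Psi_{\lambda_0,\delta'}=(-\phi_1,\phi_2,\phi_3)$ belongs to $\mathcal B$ (as remarked just after the definition \eqref{C}), and \eqref{negaqua} gives $\langle \mathcal L_{\lambda_0}\Psi_{\lambda_0,\delta'},\Psi_{\lambda_0,\delta'}\rangle<0$, producing the matching lower bound $n(\mathcal L_{\lambda_0}|_\mathcal B)\geq 1$. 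The main subtlety, rather than a genuine obstacle, lies in step (a); the remaining steps essentially inherit from the analysis already carried out in the full space.
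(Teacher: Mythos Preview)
Your proof is correct and follows essentially the same route as the paper: the kernel step intersects the two-dimensional $\ker\mathcal L_{\lambda_0}$ (parametrized by $(\alpha_1,\alpha_2,\alpha_3)$ with $\alpha_1+\alpha_2+\alpha_3=0$) with the $\mathcal C_2$-constraints to force all $\alpha_j=0$, and the Morse-index step combines $n(\mathcal L_{\lambda_0})=1$ from Proposition~\ref{5main} with the negative direction $\Psi_{\lambda_0,\delta'}\in\mathcal B$ from \eqref{negaqua}. The only cosmetic difference is that the paper uses the basis $\Phi_1,\Phi_2$ and evaluates at the vertex, whereas you use the functional identity $\phi_1'(-x)=-\phi_2'(x)$ on $(0,+\infty)$; you are also a bit more explicit about why $\mathcal L_{\lambda_0}(\mathcal B)\subset\mathcal C_2$, which the paper essentially takes for granted.
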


\begin{proof} Initially for  $\phi_i=\phi_{i, a_i(\lambda_0)}$, $ a_i(\lambda_0)=0$. Then we have (with a slight  abuse  of notation) $\Psi_{\lambda_0, \delta'}=(-\phi_1, \phi_2, \phi_3)\in \mathcal B$. Next, suppose $(u_1, u_2,u_3)\in \mathcal B\cap \ker( \mathcal{L}_{\lambda_0})$. Then from Proposition \ref{4main} there exist $\theta, \mu\in \mathbb R$ such that  
$$
(u_1, u_2,u_3)=\theta \Phi_1+\mu \Phi_2.
$$
Hence, since $-u_1(0-)=u_2(0+)=u_3(0+)$ we obtain $\theta=-2\mu$ and  $u_1(0-)=-\theta \phi_1'(0-)=-u_3(0+)=\mu \phi_3'(0+)=-\mu\phi_1'(0-)$. Therefore $\theta=\mu$ and so $\mu=\theta=0$. This shows that $\ker( \mathcal{L}_{\lambda_0} |_\mathcal B )$ is trivial. Lastly, from \eqref{negaqua} we have $\langle \mathcal{L}_{\lambda_0} \Psi_{\lambda_0, \delta'}, \Psi_{\lambda_0, \delta'}\rangle <0$. Therefore, in view of Proposition \ref{5main}, we finish the proof.
\end{proof}

\begin{remark}\label{extension}
The spectral structure of the operator $ \mathcal{L}_{\lambda_0} |_\mathcal B $ given in   Proposition \ref{6main} clearly depends on the choice of the subspace $\mathcal C_2$ in \eqref{C}. For instance, if we consider the case $c_2=c_3$ (or still $c_1=c_2=c_3$) and the subspace $ \mathcal C_1=\{(u_j)_{j=1}^3\in L^2(\mathcal Y):u_2=u_3\}$, it is not difficult to see that for  $\mathcal B_1=\mathcal C_1 \cap D (\mathcal{L}_{\lambda_0})$ we have $\dim(\ker( \mathcal{L}_{\lambda_0} |_{\mathcal B_1} ))=1$ and $n( \mathcal{L}_{\lambda_0} |_{\mathcal B_1} )=1$. In this case, we cannot apply  Theorem \ref{crit}. 
\end{remark}

The following result is a natural consequence from Propositions \ref{3main}, \ref{5main} and \ref{6main}.

\begin{proposition}\label{7main}
Let $\lambda\in (-\infty, -\frac{\pi}{2} \sum_{j=1}^3 c_j)$ and consider  $\mathcal B=\mathcal C_2 \cap D (\mathcal{L}_{\lambda})$.  Then $ \mathcal{L}_{\lambda}  : \mathcal B\to \mathcal C_2$ is well defined and $n( \mathcal{L}_{\lambda} |_\mathcal B )=1$.
\end{proposition}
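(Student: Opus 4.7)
My plan is to combine Propositions \ref{6main}, \ref{3main} and \ref{5main} via an analytic perturbation argument. Proposition \ref{6main} settles the endpoint $\lambda_0 := -\frac{\pi}{2}\sum_{j=1}^3 c_j$ by giving $n(\mathcal{L}_{\lambda_0}|_\mathcal{B}) = 1$ and $\ker(\mathcal{L}_{\lambda_0}|_\mathcal{B}) = \{0\}$. Proposition \ref{3main} provides $\ker(\mathcal{L}_\lambda) = \{0\}$ for every $\lambda \in (-\infty, \lambda_0)$, which \emph{a fortiori} restricts to $\ker(\mathcal{L}_\lambda|_\mathcal{B}) = \{0\}$. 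What remains is to propagate the Morse-index count $n=1$ from $\lambda_0$ to the full interval $(-\infty, \lambda_0)$ by a continuity-of-eigenvalues argument.

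First, I would verify that $\mathcal{L}_\lambda$ actually maps $\mathcal{B} = \mathcal{C}_2 \cap D(\mathcal{L}_\lambda)$ into $\mathcal{C}_2$. With $c_1 = c_2 = c_3$ and $-a_1(\lambda) = a_2(\lambda) = a_3(\lambda)$, the profiles satisfy $\phi_1(-x) = \phi_2(x) = \phi_3(x)$ for $x>0$, so the multiplication operator $\cos(\phi_j)$ inherits the reflection symmetry defining $\mathcal{C}_2$, and the second-derivative part respects it automatically. Hence $\mathcal{L}_\lambda(\mathcal{B}) \subset \mathcal{C}_2$. The $\delta'$-condition at the vertex is also compatible with $\mathcal{C}_2$ (it collapses to $3 c_1 v_2(0+) = \lambda c_1 v_2'(0+)$ for symmetric vectors), so $\mathcal{L}_\lambda|_\mathcal{B}$ is a well-defined self-adjoint operator on $\mathcal{C}_2$.

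Next, I would realize $\{\mathcal{L}_\lambda|_\mathcal{B}\}$ as a real-analytic family of self-adjoint operators on $\lambda \in (-\infty, 0)$. The dependence on $\lambda$ enters only through (i) the coefficient $1/\lambda$ appearing in the $\delta'$-boundary condition, which is real-analytic away from the origin, and (ii) the potentials $\cos(\phi_j(\cdot;\lambda))$, which are real-analytic in $\lambda$ by the implicit-function representation of $a_j(\lambda)$ around \eqref{deriva4}. With this, Kato's theory of analytic families (handled via the associated quadratic form, or via a $\lambda$-dependent unitary trivialization of the domain) yields that the eigenvalues of $\mathcal{L}_\lambda|_\mathcal{B}$ lying below the essential spectrum $[1,+\infty)$ are real-analytic functions of $\lambda$, and in particular move continuously.

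Finally I would close by a connectedness argument. Since no eigenvalue can cross $0$ (by Proposition \ref{3main}), since $[1,+\infty)$ is a floor for the essential spectrum (so nothing enters or leaves the negative half-line through the continuum), and since the simple negative eigenvalue is already present at $\lambda = \lambda_0$ with $n=1$, the number of negative eigenvalues of $\mathcal{L}_\lambda|_\mathcal{B}$ stays constantly equal to $1$ throughout $(-\infty, \lambda_0]$. The main technical obstacle I anticipate is framing the analyticity carefully when the operator domain itself carries $\lambda$; the quadratic form viewpoint is designed precisely to sidestep this. A secondary delicate point is to exclude the possibility that the negative eigenvalue escapes to $-\infty$ as $\lambda \to -\infty$, which should follow from a uniform-in-$\lambda$ lower bound on the form on the symmetric subspace, using that $|\cos(\phi_j)| \leqq 1$ and $1/\lambda \to 0^-$.
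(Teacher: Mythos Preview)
Your proposal is correct and follows essentially the same route the paper sketches: start from Proposition~\ref{6main} at the endpoint $\lambda_0$, invoke the kernel triviality of Proposition~\ref{3main} on $(-\infty,\lambda_0)$, and propagate the Morse-index count by analytic perturbation together with a Riesz-projection continuation argument (the paper points to Proposition~\ref{antimain4} as the template). Your extra care about verifying that $\mathcal{L}_\lambda$ preserves $\mathcal{C}_2$ and about framing analyticity through the quadratic form is exactly the detail the paper leaves to the reader.
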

\begin{proof} The proof is based on Proposition \ref{6main}, analytic perturbation theory around $\lambda_0$, a principle of continuation based in the Riesz-projection, as well as on the ideas in the proof of Proposition 4.4 in \cite{AnPl-delta} (see also Proposition \ref{antimain4} below). Details are left to the reader.
\end{proof}

\begin{remark}\label{extension2}
 We note that via analytic perturbation theory is possible to see  that for the case $c_2=c_3$, the subspace $\mathcal C_1$ defined in Remark \ref{extension}, 
$\mathcal B_3=\mathcal C_1 \cap D (\mathcal{L}_{\lambda})$ and  $\lambda\in (-\infty, -\frac{\pi}{2} \sum_{j=1}^3 c_j)$, we have
$n( \mathcal{L}_{\lambda} |_{\mathcal B_3} )=2$. In this case we do not know what happens with the stability properties of $\Psi_{\lambda, \delta'}$, but we conjecture that they are unstable (see \cite{AngCav}). It is to be noted that Proposition \ref{semi2} is still true on the subspace $(H^1(\mathcal Y)\cap \mathcal C_1)\times  \mathcal C_1$.
\end{remark}

\begin{proof}[Proof of Theorem \ref{2main} (case $ - \infty < \lambda \leqq -\frac{\pi}{2} \sum_{j=1}^3 c_j, \, c_1=c_2=c_3$)]
From Propositions \ref{3main}, \ref{6main} and \ref{7main} we have $\ker( \mathcal{L}_\lambda|_\mathcal B)=\{0\}$ and $n( \mathcal{L}_\lambda|_\mathcal B)=1$. Moreover, Proposition \ref{semi2} verifies Assumption $(S_1)$ in the linear instability criterion in subsection 3.1. Thus, from Theorem \ref{crit} follows the linear instability property of the stationary profile $\Phi_{\lambda, \delta'}$. Lastly, from Theorem \ref{well2} and the analysis of Henry {\it et al.} in \cite{HPW82} (see also Angulo {\it{et al.}} \cite{ALN08}, Angulo and Natali \cite{AngNat16}) we obtain the nonlinear instability property of $\Phi_{\lambda, \delta'}$. This finishes the proof.
\end{proof}

\subsection{Kink/anti-kink instability theory on a $\mathcal{Y}$-junction of type I}
\label{secAK}

In this subsection we study the existence and stability of the kink/anti-kink profiles defined in \eqref{antikink}.  Since these stationary profiles do not belong to the classical $H^2(\mathcal Y)$ Sobolev space, we need to make precise the functional spaces suitable for our needs. 

\subsubsection{The kink/anti-kink solutions with specific profile on a $\mathcal Y$-junction}
\label{secAK-K}

Let us consider the specific class of kink/anti-kink solutions defined in \eqref{antikink} satisfying the $\delta'$-condition in \eqref{bcIfam} with the continuity property $\phi_2(0+)= \phi_3(0+)$ and, for simplicity, subject to the condition
\begin{equation}
\label{equalcs}
c_1 = c_2 = c_3 = 1.
\end{equation}
Consequently we have $a_2=a_3$ and, therefore, $\phi_2=\phi_3$ on $(0,+\infty)$.  Next, the continuity condition $\phi_1'(0-)=\phi'_2(0+)$ implies that $\cosh(a_1)=\cosh(a_2)$. As we are interested in non-continuous profiles at the vertex $\nu=0$ of the $\mathcal Y$-junction, we consider the case $a_1=-a_2$. Now the Kirchhoff's type condition in \eqref{bcIfam} implies the following equality for $y=e^{-a_1}$
\begin{equation}\label{anticondi}
F(y)\equiv -\frac{1+y^2}{y}[2 \arctan(y) - \arctan (1/y)]=\lambda.
\end{equation}
Thus, we obtain immediately the following behavior of the mapping $F$: $(i)$ $F'(y)<0$ for all $y>0$; $(ii)$ $F(y^*)=0$ for a unique $y^*\in (0,1)$, and $(iii)$, $\lim_{y\to 0^+} F(y)=+\infty$, $\lim_{y\to +\infty} F(y)=-\infty$. Then, from \eqref{anticondi} we have the following specific behavior of the $\lambda$-parameter:
\begin{enumerate}
\item[a)] for $a_1=0$, $\lambda=-\frac{\pi}{2}$,
\item[b)] for $a_1>0$, $\lambda\in (-\frac{\pi}{2}, +\infty)$,
\item[c)] for $a_1<0$, $\lambda\in (-\infty, -\frac{\pi}{2})$.
\end{enumerate}

Henceforth, from \eqref{anticondi} and the properties for $F$ we obtain the existence of a smooth shift-map (also real analytic), $
\lambda\in (-\infty, +\infty)\mapsto a_1(\lambda)$ satisfying \eqref{anticondi}, and such that the mapping 
$$
\lambda\in \Big(-\infty, +\infty\Big)\mapsto \Pi_{\lambda, \delta'}=(\phi_{1,a_1(\lambda)}, \phi_{2, a_2(\lambda)}, \phi_{2, a_2(\lambda)},0,0,0),\quad a_2(\lambda)=-a_1(\lambda)
$$
 represents a real-analytic family of static profiles for the sine-Gordon equation on a $\mathcal Y$-junction of first-type satisfying  for all $\lambda \in \mathbb R$, $\lim_{x\to -\infty} \phi_{1,a_1(\lambda)}(x)=2\pi$. Hence we obtain, for $a_i=a_i(\lambda)$ and $\phi_i=\phi_{i,a_i(\lambda)}$, the following behavior:
\begin{enumerate}
\item[1)] for  $\lambda\in (-\infty, -\frac{\pi}{2} )$ we have $a_1<0$:  therefore   $ \phi'_i<0$ and $\phi_1''(a_1) =0=\phi_2''(-a_1) $. Moreover, $\phi_1(0)\in (0, \pi)$ and  $\phi_2(0)\in (\pi, 2\pi)$.
Thus, the profile of $(\phi_1, \phi_2,\phi_2)$, looks similar to the one shown in Figure \ref{figAKBumpYI} below (bump-profile type); 
\item[2)] for   $\lambda\in (-\frac{\pi}{2}, +\infty)$ we obtain $a_1>0$:  therefore   $ \phi''_1<0$ and $ \phi''_i>0$ for $i=2,3$,   Thus, the profile of $(\phi_1, \phi_2,\phi_3)$ looks similar to that in Figure \ref{figAKTailYI} below (typical tail-profile); 
\item[3)]  the case  $\lambda=-\frac{\pi}{2}$ implies $a_1=a_2=a_3=0$: Therefore,  $\phi_1(0)=\phi_2(0) =\phi_3(0)=\pi$ (continuity at the vertex of the graph),
$\phi_i''(0) =0$, $i=1,2, 3$. Thus, the profile of $(\phi_1, \phi_2,\phi_3)$ represents exactly two kink/anti-kink solitons profiles connected in the vertex of the graph such as shown in Figure \ref{figAKZeroWYI}.
\end{enumerate}

\begin{figure}[t]
\begin{center}
\subfigure[$\lambda\in (-\infty, -\frac{\pi}{2})$]{\label{figAKBumpYI}\includegraphics[scale=.4, clip=true]{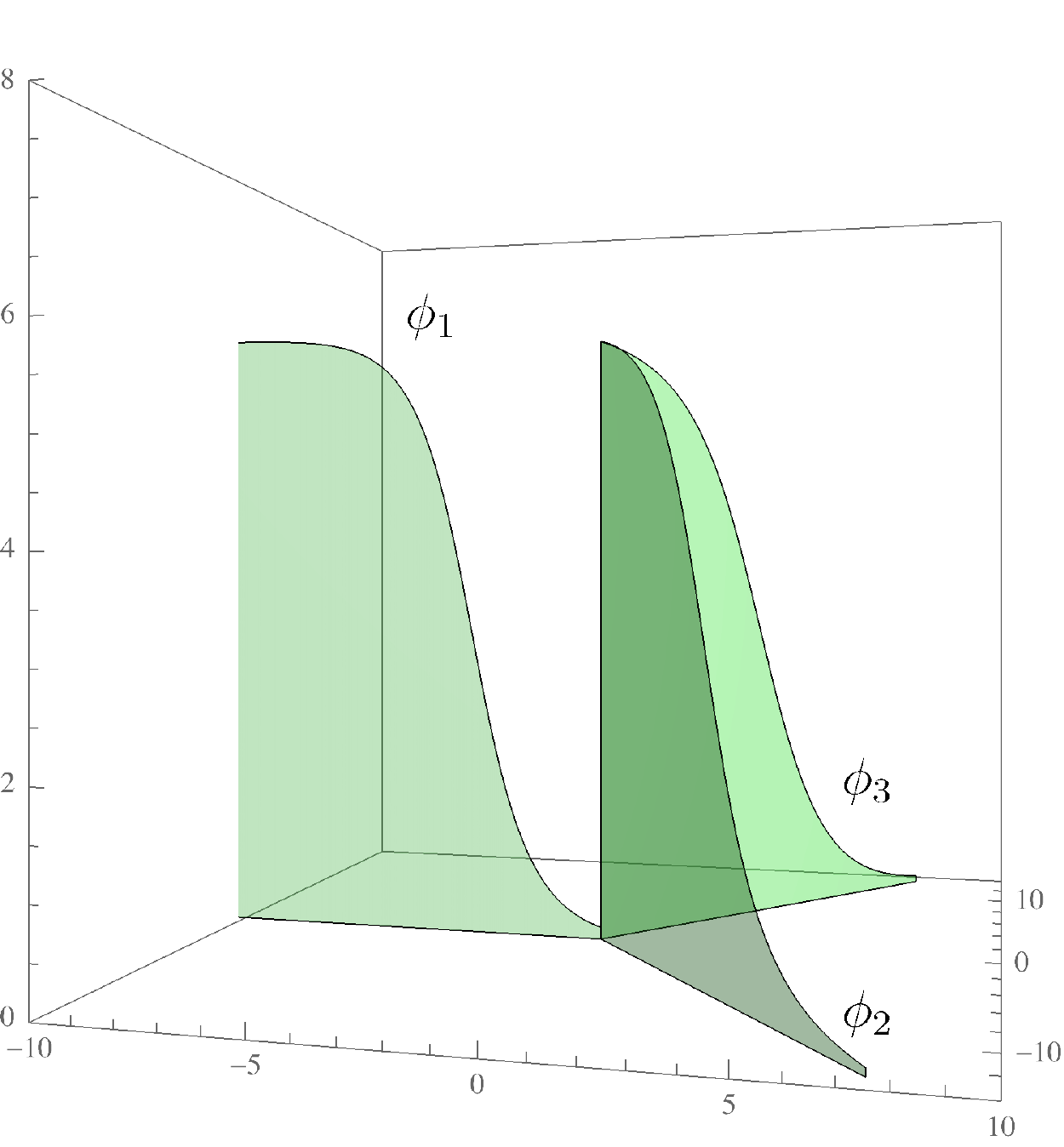}}
\subfigure[$\lambda\in (-\frac{\pi}{2}, \infty)$]{\label{figAKTailYI}\includegraphics[scale=.4, clip=true]{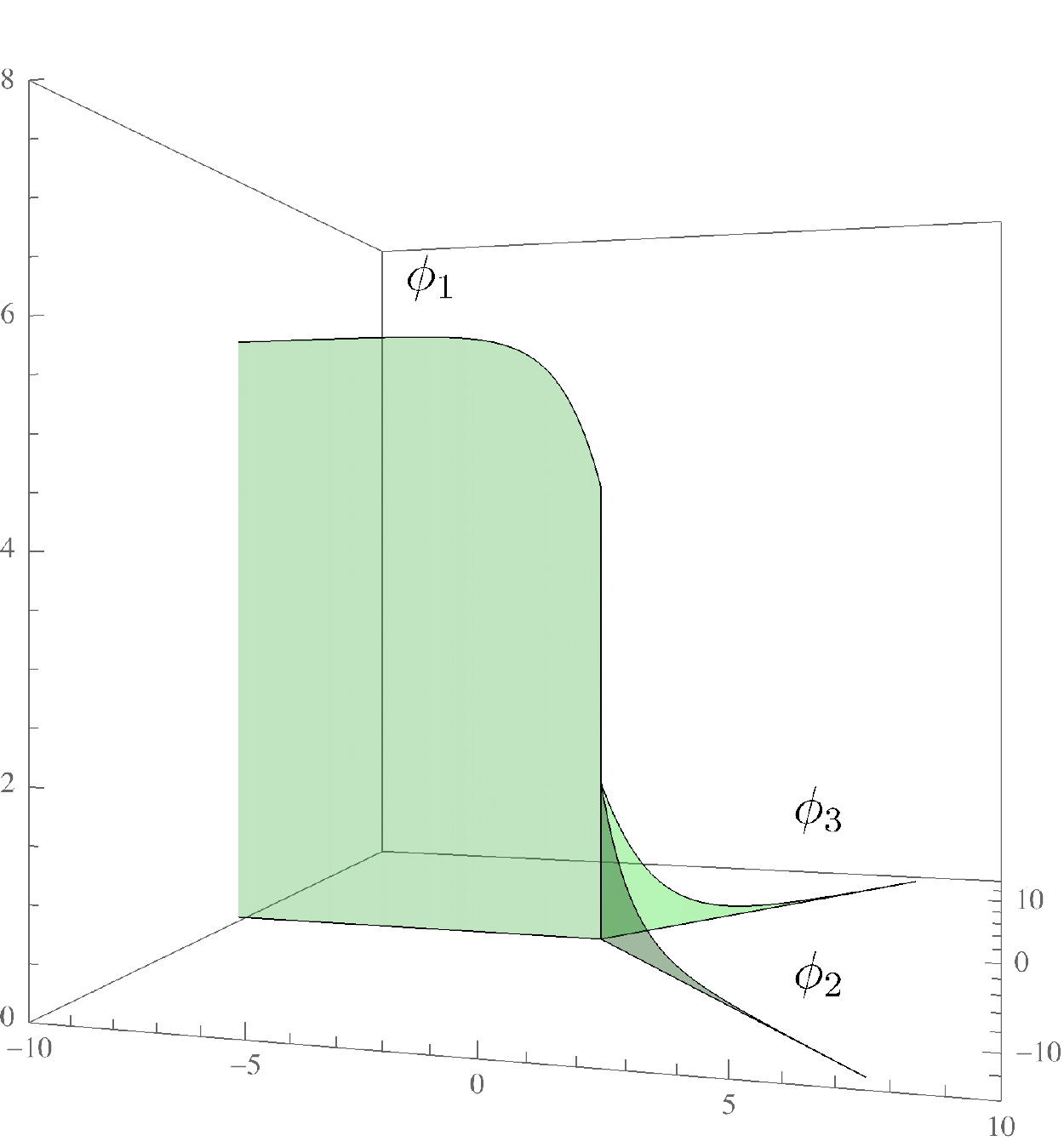}}
\subfigure[$\lambda=-\frac{\pi}{2}$]{\label{figAKZeroWYI}\includegraphics[scale=.4, clip=true]{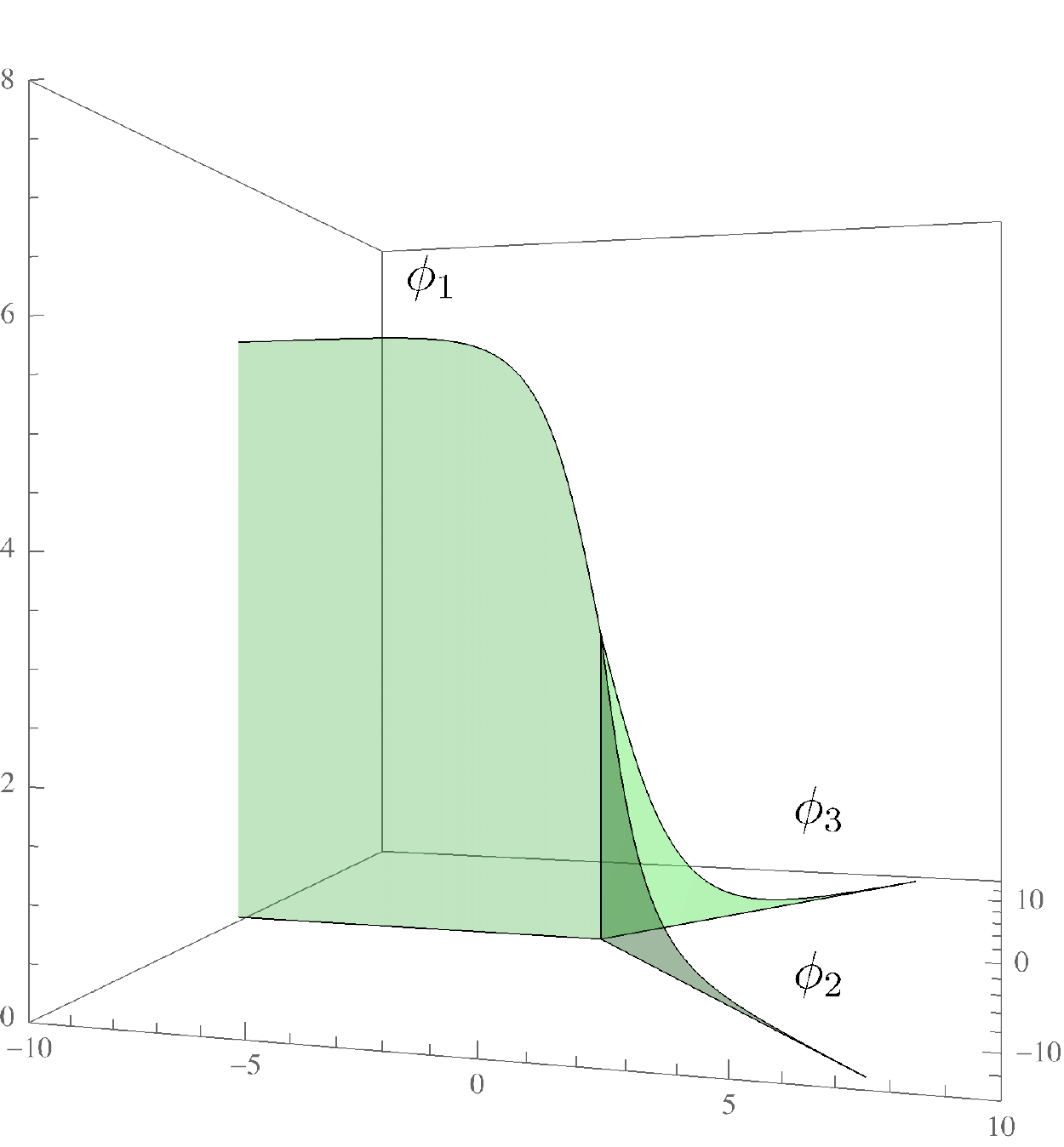}}
\end{center}
\caption{\small{Plots of stationary solutions $(\phi_1, \phi_2, \phi_3)$ defined in \eqref{antikink} in the case where $c_j = 1$ for all $j=1,2,3$, for different values of $\lambda \in \R$. Panel (a) shows the stationary profile solutions (``bump-type'' configuration) for the case $\lambda = 100 \in (-\infty,-\pi/2)$. The shaded profile of the anti-kink on the edge $E_1 = (-\infty,0)$ illustrates the fact that the profile $\phi_1(x)$ has infinite mass as $\phi_1 \notin L^2(-\infty,0)$. Panel (b) shows the profile of\/ ``tail-type'' for the case $\lambda = 1 \in (-\pi/2,\infty)$. Notice the discontinuity at the vertex. Panel (c) shows the profile solutions when $\lambda = -\pi/2$ (color online).}}\label{figAKsolitons}
\end{figure}

\begin{remark}
\label{cj} 
It is to be observed that we have left open the description of other kink/anti-kink-soliton profiles not satisfying the continuity property at zero for the components $\phi_2, \phi_3$, as well as the case where the constants $c_i$ are not all equal to each other. These other profiles, however, can be studied following the methods to be described in this section.
\end{remark}

Our  instability result for the kink/anti-kink profiles
$\Pi_{\lambda,\delta'}=(\phi_1, \phi_2, \phi_2, 0,0,0)$ (with a slight abuse of notation) with $\phi_i=\phi_{i, a_i(\lambda)}$ defined in \eqref{antikink} is the following
\begin{theorem}\label{0antimain} 
Let $\lambda\in  (-\frac{\pi}{2}, +\infty)$ and the smooth family of stationary kink/anti-kink profiles $\lambda\to \Pi_{\lambda,\delta'}$ determined above. Then $\Pi_{\lambda, \delta'}$ is spectrally and nonlinearly unstable for the sine-Gordon model \eqref{sg2} on a $\mathcal Y$-junction  of first type. 
\end{theorem}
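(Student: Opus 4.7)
The plan is to mirror the strategy of Theorem~\ref{2main}, the central novelty being that $\phi_1 \to 2\pi$ at $-\infty$ and so $\phi_1 \notin L^2(-\infty,0)$. First I would recast the problem in terms of perturbations $w = u - \Pi_{\lambda,\delta'}$: because $\Pi_{\lambda,\delta'}$ itself satisfies \eqref{bcIfam} and $\phi_1 - 2\pi \in H^2(-\infty,0)$, the perturbation $w$ belongs to $D(\mathcal{L}_\lambda) = D_\mathrm{I}(\mathcal{F}_\lambda)$, and linearization produces the standard operator $\mathcal{L}_\lambda$ with potentials $\cos(\phi_j)$ well-defined by $2\pi$-periodicity. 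Local well-posedness in the affine space $\Pi_{\lambda,\delta'} + H^1(\mathcal{Y})\times L^2(\mathcal{Y})$ then follows from verbatim adaptations of Theorems~\ref{cauchy2} and~\ref{well2}.

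Next I would verify the hypotheses of Theorem~\ref{crit}. Weyl's theorem and $\cos(\phi_j)\to 1$ at each infinity give $\sigma_{\mathrm{ess}}(\mathcal{L}_\lambda) = [1,+\infty)$. For the kernel, Sturm--Liouville theory forces bounded kernel elements to be of the form $(\alpha_1\phi_1', \alpha_2\phi_2', \alpha_3\phi_3')$. Exploiting $\phi_2 = \phi_3$, the identity $\phi_1(0)+\phi_2(0) = 2\pi$ (so that $\phi_1''(0-) = -\phi_2''(0+)$) and $\phi_1'(0-) = \phi_2'(0+) = -2\sech(a_1)$, the flux-continuity condition forces $\alpha_2 = \alpha_3 = -\alpha_1$, while the $\delta'$-jump reduces to $3\phi_2'(0) = \lambda \phi_2''(0)$, equivalently $\lambda = -3(1+y^2)/(1-y^2)$ with $y = e^{-a_1}\in(0,1)$. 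This yields $\lambda < -3 < -\pi/2$, contradicting the hypothesis, so $\alpha_j = 0$ and $\ker(\mathcal{L}_\lambda) = \{\mathbf 0\}$ throughout $(-\pi/2, +\infty)$.

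The main obstacle is the Morse-index computation $n(\mathcal{L}_\lambda) = 1$. For the upper bound I would reprise the extension-theoretic argument of Proposition~\ref{5main}: since $\phi_j'(x) = -2\sech(x - a_j)$ vanishes nowhere on each edge, the factorization
\[
L_j\psi = -\frac{1}{\phi_j'}\frac{d}{dx}\Big[(\phi_j')^2 \frac{d}{dx}\Big(\frac{\psi}{\phi_j'}\Big)\Big]
\]
is available and gives $\langle \mathcal{H}_0 \Lambda, \Lambda\rangle = A + P$ with $A \geq 0$. In the tail regime one has $\phi_1''(0-)/\phi_1'(0-) > 0$ (both factors are negative, since $\phi_1(0) \in (\pi, 2\pi)$) and $\phi_j''(0+)/\phi_j'(0+) < 0$ for $j = 2,3$, so $P \geq 0$; hence $\mathcal{H}_0 \geq 0$ and Proposition~\ref{semibounded} gives $n(\mathcal{L}_\lambda) \leq 1$. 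For the lower bound I would exploit the admissible test vector $\Psi = (\phi_1 - 2\pi,\; \phi_2 - \pi,\; \phi_3 - \pi) \in D(\mathcal{L}_\lambda)$ (the $\pi$-shifts on the outgoing edges are exactly those forced by the $\delta'$-jump applied to the profile) and compute $\langle \mathcal{L}_\lambda \Psi, \Psi\rangle$ via $\sin(y+2\pi) = \sin y$ and $\sin(z+\pi) = -\sin z$, combined with the symmetry $\phi_1(-x) = 2\pi - \phi_2(x)$ which pairs the three integrals on a single half-line. Elementary sign analysis on the ranges $y = \phi_1 - 2\pi \in (-\pi,0)$ and $z = \phi_j - \pi \in (-\pi,0)$ is expected to deliver a strictly negative value near the symmetric point $\lambda_\star = -\pi/2$ (where $\phi_j''(0) = 0$ makes the boundary terms collapse). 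The conclusion would then be propagated to all $\lambda > -\pi/2$ by analytic perturbation theory along $\lambda \mapsto \Pi_{\lambda,\delta'}$ and a Riesz-projection continuation argument as in Proposition~\ref{7main}, since the triviality of $\ker(\mathcal{L}_\lambda)$ throughout the interval forbids any eigenvalue from crossing zero, so the Morse index is locally constant.

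With $(S_1)$--$(S_3)$ verified, Theorem~\ref{crit} produces a real positive eigenvalue of $J\mathcal{E}$ and hence spectral instability of $\Pi_{\lambda,\delta'}$. The nonlinear instability in the $H^1(\mathcal{Y})\times L^2(\mathcal{Y})$-norm of the perturbation then follows from the $C^2$-smoothness of the data-to-solution map combined with the standard bootstrap of Henry \emph{et al.}~\cite{HPW82}, exactly as in the closing of the proof of Theorem~\ref{2main}.
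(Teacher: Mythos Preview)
Your overall architecture matches the paper's: reformulate as a perturbation problem in $H^1(\mathcal Y)\times L^2(\mathcal Y)$, verify $(S_1)$--$(S_3)$, invoke Theorem~\ref{crit}, and upgrade to nonlinear instability via \cite{HPW82}. The kernel computation and the Morse-index \emph{upper} bound via the factorization \eqref{spec7} are correct and agree with the paper's Propositions~\ref{antimain} and~\ref{anti5main}.

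The genuine gap is in the Morse-index \emph{lower} bound. Your proposed test vector $\Psi = (\phi_1 - 2\pi,\; \phi_2 - \pi,\; \phi_3 - \pi)$ is \emph{not} in $D(\mathcal L_\lambda)$, nor even in $L^2(\mathcal Y)$: since $\phi_j(x)\to 0$ as $x\to+\infty$ on the outgoing edges, $\phi_j-\pi\to -\pi$ and $\phi_j-\pi\notin L^2(0,\infty)$. The $\pi$-shifts you introduce to satisfy the $\delta'$-jump destroy square-integrability, so $\langle \mathcal L_\lambda \Psi,\Psi\rangle$ is not defined and the argument collapses. Conversely, the choice $(\phi_1-2\pi,\phi_2,\phi_3)$ lies in $L^2(\mathcal Y)$ but fails the Kirchhoff jump by exactly $2\pi$, so it is not in the domain either; there is no constant shift that fixes both issues simultaneously.

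The paper avoids this by working with the quadratic form $\mathcal Q$ on $H^1(\mathcal Y)$ (formula~\eqref{antiqua}) and choosing test vectors built from the \emph{derivatives} $\phi_j'$, which decay at infinity. Concretely: at $\lambda=-\pi/2$ one takes $\Lambda_1=(\phi_1',\phi_2',\phi_2')$ and computes $\mathcal Q(\Lambda_1)=\frac{1}{\lambda}[\phi_1'(0)]^2<0$ (the boundary curvature terms vanish because $\phi_j''(0)=0$ there); for $\lambda\in(-\pi/2,0)$ one takes $\Lambda_2=(0,\phi_2',\phi_2')$ and verifies $\mathcal Q(\Lambda_2)<0$ via the elementary inequality $2\phi_2'(0)/\phi_2''(0)<-2<\lambda$. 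This yields $n(\mathcal L_\lambda)\geq 1$ on $[-\pi/2,0)$. The range $[0,+\infty)$ is then reached by analytic perturbation around $\lambda=0$ (not around $-\pi/2$) together with the Riesz-projection continuation, exactly as in your final step; the paper notes explicitly (Remark~\ref{antifora}) that producing a negative direction directly for $\lambda\geq 0$ is not straightforward, which is why the perturbative continuation is needed there.
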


The proof of Theorem \ref{0antimain} will follow by combining   the framework of extension theory and the analytic perturbation theory.  At this point, some observations about the stability problem for values of $\lambda$ outside  the range $(-\frac{\pi}{2}, +\infty)$ are in order. Indeed, for $\lambda=-\frac{\pi}{2}$ we show that the fundamental Schr\"odinger diagonal operator $\mathcal L$ in \eqref{trav23} associated to the kink/anti-kink profiles $\Pi_{\lambda,\delta'}$ has a two-dimensional kernel and a  Morse index  equal to one.  For $\lambda<-\frac{\pi}{2}$ we do not know which is exactly the Morse index for $\mathcal L$, but for completeness and for  future study, we established in  Proposition \ref{Appanti} in Appendix \S B that, in this case, the Morse index is at least two. Thus, for  values of $\lambda$ outside  $(-\frac{\pi}{2}, +\infty)$ the stability properties of the kink/anti-kink profiles \eqref{antikink} remain open.

\subsubsection{Functional space for stability properties of the kink/anti-kink profile}
\label{akfunspa}
The natural framework space for studying  stability properties associated to the kink/anti-kink soliton profile $\Phi=(\phi_j)_{j=1}^3$ described in the former subsection for the sine-Gordon model is $\mathcal X(\mathcal Y)= H^1_{\mathrm{loc}}(-\infty, 0)\bigoplus H^1(0, \infty)\bigoplus H^1(0, \infty)$. Thus we say that a flow $t\to (u(t), v(t))\in \mathcal X(\mathcal Y) \times L^2(\mathcal Y)$ is called  a \emph{perturbed solution} for the  kink/anti-kink profile $\Phi\in \mathcal X(\mathcal Y)$ if for $(P(t), Q(t))\equiv (u(t)-\Phi,v(t))$ we have that $ (P(t), Q(t))\in H^1(\mathcal Y)\times L^2(\mathcal Y)$ and  
$\bold z=(P,Q)^\top$ satisfies the following vectorial perturbed sine-Gordon model 
 \begin{equation}\label{akmodel}
 \begin{cases}
  \bold z_t=JE\bold z +F_1(\bold z)\\
 P(0)=u(0)-\Phi \in H^1(\mathcal Y),\\
 Q(0)=v(0)\in  L^2(\mathcal Y),
  \end{cases}
 \end{equation} 
  where for $P=(p_1, p_2, p_3)$ we have
   \begin{equation}
  F_1(\bold z)=\left(\begin{array}{cc}  0\\  0   \\  0   \\  \sin(\phi_1)-\sin (p_1+\phi_1)   \\  \sin(\phi_2)-\sin (p_2+\phi_2)   \\  \sin(\phi_3)-\sin (p_3+\phi_3)  \end{array}\right).
  \end{equation} 
Then, by studying stability  properties of the stationary anti-kink $\Pi_{\lambda, \delta'}=(\phi_1,\phi_2,\phi_2,0,0,0)$ by the sine-Gordon model on $\mathcal X(\mathcal Y) \times L^2(\mathcal Y)$ is reduced to study stability properties of the trivial solution $(P,Q)=(0,0)$ for the  linearized  model \eqref{akmodel} around $(P,Q)=(0,0)$. Thus, via Taylor's Theorem  we obtain the  linearized system in \eqref{stat4} but with the Schr\"odinger diagonal operator $\mathcal L$ in \eqref{stat5} now determined by the anti-kink profile $\Phi=(\phi_j)$. We will denote this operator by $\mathcal L_{\lambda}$ in \eqref{deriva1} below, with the domain $D(\mathcal L_{\lambda})$ in \eqref{I3trav23}. In this form, we can apply {\it{ipsi litteris}} the  semi-group theory results in subsection \ref{seclocalWP}  to the operator $JE$ and to the local well-posedness problem in $H^1(\mathcal Y)\times L^2(\mathcal Y)$ for the vectorial perturbed sine-Gordon model \eqref{akmodel}. Lastly, we note that the kink/anti-kink profile $\Phi\in \mathcal X(\mathcal Y)$ but $\Phi'\in H^2(\mathcal Y)$.

\subsubsection{The spectral study in the kink/anti-kink case}

In this subsection we give the spectral informations for the family of self-adjoint operators $(\mathcal{L}_\lambda, D(\mathcal{L}_\lambda))$ where
\begin{equation}\label{deriva1}
\mathcal{L}_\lambda =\Big (\Big(-\frac{d^2}{dx^2}+\cos(\phi_j)
\Big)\delta_{j,k} \Big ),\quad \l1\leqq j, k\leqq 3,
\end{equation}
associated to the kink/anti-kink solutions $(\phi_1, \phi_2, \phi_2)$ determined in the previous subsection. Here $D(\mathcal{L}_\lambda)$ is the $\delta'$-interaction domain defined  in \eqref{I3trav23} (see also Proposition \ref{2M} at Appendix \S A).

We begin by proving a result that applies to all values of $\lambda$ under consideration.
\begin{proposition}\label{antimain}
Let $\lambda\in  (-\infty,+\infty)$. Then, $\ker( \mathcal{L}_\lambda )=\{\mathbf{0}\}$ for all $\lambda\neq -\frac{\pi}{2}$. For $\lambda=-\frac{\pi}{2}$ we have $\dim (\ker( \mathcal{L}_\lambda) )=2$. Moreover, for all $\lambda$ we obtain $\sigma_{\mathrm{ess}}(\mathcal{L}_\lambda)=[1,+\infty)$.
\end{proposition}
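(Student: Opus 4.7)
The approach mirrors Proposition \ref{3main}. On each edge $E_j$, any element of $\ker(\mathcal{L}_\lambda)$ must solve the Sturm-Liouville ODE $-u_j'' + \cos(\phi_j) u_j = 0$; differentiating the stationary equation \eqref{trav21} shows that $\phi_j'$ is one solution, and since $\cos(\phi_j)(x)\to 1$ exponentially at the unbounded end of $E_j$, the second linearly independent solution behaves like $e^{\pm x}$ and is excluded by the $H^2(E_j)$ membership required in \eqref{I3trav23}. Hence every $\mathbf{u}\in\ker(\mathcal{L}_\lambda)$ has the form $u_j = \alpha_j \phi_j'$ with scalars $\alpha_j\in\mathbb{R}$.

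From the explicit anti-kink/kink formulas $\phi_j(x)=4\arctan(e^{-(x-a_j)})$ with $a_2=a_3=-a_1$, one reads off the common vertex value $\phi_1'(0-)=\phi_2'(0+)=\phi_3'(0+)=-2\,\mathrm{sech}(a_1)$ together with $\phi_1''(0-)=-2\,\mathrm{sech}(a_1)\tanh(a_1)=-\phi_2''(0+)=-\phi_3''(0+)$. Inserting $u_j=\alpha_j\phi_j'$ into the flux-continuity part of \eqref{I3trav23} gives $\alpha_1\phi_1''(0-)=\alpha_j\phi_j''(0+)$ for $j=2,3$; for $\lambda\neq-\pi/2$ one has $a_1\neq 0$, so $\phi_2''(0+)\neq 0$ and these relations force $\alpha_2=\alpha_3=-\alpha_1$. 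The $\delta'$-jump rule then collapses, after cancelling the nonzero factor $\mathrm{sech}(a_1)$, to the single scalar identity
\begin{equation*}
\alpha_2\bigl(\lambda\tanh(a_1)+3\bigr)=0.
\end{equation*}
The crux is then a sign/magnitude case-split on $\lambda$ driven by the characterization of the shift-map $\lambda\mapsto a_1(\lambda)$ listed immediately after \eqref{anticondi}: if $\lambda<-\pi/2$ then $a_1<0$, so $\tanh(a_1)<0$ and $\lambda\tanh(a_1)>0\neq -3$; if $\lambda\geq 0$ then $a_1>0$ and $\lambda\tanh(a_1)\geq 0\neq -3$; and if $\lambda\in(-\pi/2,0)$ the elementary bound $|\lambda\tanh(a_1)|<\frac{\pi}{2}\cdot 1<3$ precludes equality. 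Therefore $\alpha_2=0$ and $\ker(\mathcal{L}_\lambda)=\{\mathbf{0}\}$ whenever $\lambda\neq-\pi/2$.

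At the exceptional value $\lambda_0=-\pi/2$ one has $a_1=0$, so every $\phi_j''(0\pm)$ vanishes; the flux-continuity conditions are trivially satisfied by any triple $(\alpha_1,\alpha_2,\alpha_3)$ and the $\delta'$-jump condition reduces to the single constraint $\alpha_1=\alpha_2+\alpha_3$, yielding $\dim\ker(\mathcal{L}_{-\pi/2})=2$ with basis, for instance, $(\phi_1',\phi_2',0)$ and $(\phi_1',0,\phi_3')$ (both of which are readily checked to lie in $D(\mathcal{L}_{-\pi/2})$). Finally, the essential spectrum statement follows from Weyl's theorem: since $\phi_1(x)\to 2\pi$ as $x\to-\infty$ and $\phi_j(x)\to 0$ as $x\to+\infty$ for $j=2,3$, the multiplicative potential $\cos(\phi_j)-1$ decays exponentially at the unbounded end of each edge, so it is relatively compact with respect to $\mathcal{H}_\lambda+I$; combining this with Theorem \ref{spectrum2} gives $\sigma_{\mathrm{ess}}(\mathcal{L}_\lambda)=\sigma_{\mathrm{ess}}(\mathcal{H}_\lambda+I)=[1,+\infty)$. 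The only delicate ingredient is the scalar identity $\lambda\tanh(a_1(\lambda))=-3$, which is a transcendental equation on the implicitly defined shift-map; the main obstacle is that one might fear it admits roots, but the elementary size bound $|\lambda|<\pi/2$ in the unique nontrivial subcase $\lambda\in(-\pi/2,0)$ rules this out without any finer analysis of the shift-map.
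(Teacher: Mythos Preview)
Your proof is correct and follows essentially the same approach as the paper's: reduce via Sturm--Liouville on half-lines to $u_j=\alpha_j\phi_j'$, use flux continuity together with $\phi_1''(0-)=-\phi_2''(0+)$ to force $\alpha_2=\alpha_3=-\alpha_1$ when $a_1\neq 0$, derive the scalar obstruction (the paper writes it as $\lambda=-3\phi_1'(0)/\phi_1''(0)$, you equivalently as $\lambda\tanh(a_1)=-3$), and then do a sign/magnitude case-split on $\lambda$ to rule it out; the $\lambda=-\pi/2$ kernel and the Weyl essential-spectrum statement are handled identically. Your organization is slightly more compact (one scalar identity, three subcases) than the paper's four-case treatment, but the content is the same.
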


\begin{proof} Let $\bold u=(u_1, u_2, u_3)\in D(\mathcal{L}_\lambda)$ and $\mathcal{L}_\lambda \bold{u}=\bold{0}$. Then, since $(\phi'_1, \phi'_2, \phi'_2)\in H^2(\mathcal Y)$ follows from Sturm-Liouville theory on half-lines 
\begin{equation}\label{anti2spec}
u_1(x)=\alpha_1\phi'_1(x), \;\;x<0,\quad u_j(x) = \alpha_j\phi'_j(x), \;\;x>0,\;\; j=2,3,
\end{equation}
for some $\alpha_1$ and $\alpha_j$, $j = 2,3$, real constants. Next, we consider the following cases:
\begin{enumerate}
\item[a)] Suppose $a_1>0$ ($\lambda\in [0, +\infty)$): The conditions $u'_1(0-)=u'_2(0+)=u'_3(0+)$ and $\phi_2=\phi_3$ imply that $\alpha_2=\alpha_3$, since $\phi_2''(0)\neq 0$. Next, the jump-condition and $\phi'_1(0-)=\phi'_2(0+)=\phi'_3(0+)$ imply
\begin{equation}\label{anti3}
(2\alpha_2-\alpha_1)\phi'_1(0)=\lambda \alpha_1 \phi''_1(0).
\end{equation}
Now, since for all $\lambda\neq -\frac{\pi}{2}$ we have $\phi''_1(0)=-\phi''_2(0)\neq 0$. Thus, $\alpha_1=-\alpha_2$ and from \eqref{anti3} we arrive at the relation $-3\alpha_1\phi'_1(0)=\lambda \alpha_1 \phi''_1(0)$. Suppose $\alpha_1\neq 0$, then $\lambda=-3\phi'_1(0)/\phi''_1(0)$. Since $\phi'_1(0)<0$ and $\phi''_1(0)<0$ we get $\lambda<0$, which is a contradiction. So, we need to have $0=\alpha_1=\alpha_2=\alpha_3$ and therefore $ \bold{u}=\bold{0}$.

\item[b)] Suppose $a_1>0$ ($\lambda\in (-\frac{\pi}{2}, 0)$): From item $a)$ we still have the relation $-3\alpha_1\phi'_1(0)=\lambda \alpha_1 \phi''_1(0)$. Suppose $\alpha_1\neq 0$.
Then, by the formula for the anti-kink on $(-\infty, 0)$ in \eqref{antikink} we obtain for $a_1>0$,
$$
\frac{\phi'_1(0)}{\phi''_1(0)}=\frac{\cosh(a_1)}{\sinh(a_1)}>1. 
$$
Thus, we obtain $\lambda<-3$ which is a contradiction. Therefore, $\alpha_1=0$ and so $ \bold{u}=\bold{0}$.

\item[c)] Suppose $a_1<0$ ($\lambda\in (-\infty, -\frac{\pi}{2})$): From item $a)$ we still have the relation $-3\alpha_1\phi'_1(0)=\lambda \alpha_1 \phi''_1(0)$. Suppose $\alpha_1\neq 0$. Since $\phi'_1(0)<0$ and $\phi''_1(0)>0$ we get $\lambda>0$ which is a contradiction. So, we need to have $0=\alpha_1=\alpha_2=\alpha_3$ and therefore $ \bold{u}=\bold{0}$.

\item[d)] Suppose $a_1=0$ ($\lambda=-\frac{\pi}{2}$): In this case the Kirchhoff's condition for $\bold u$, $\phi'_1(0)=\phi'_2(0)$ and $\phi''_1(0)=0$ we get the relation $\alpha_2+\alpha_3-\alpha_1=0$. Therefore
$$
(u_1, u_2, u_3)=\alpha_2(\phi'_1, \phi'_2,0)+ \alpha_3(\phi'_1, 0,  \phi'_2).
$$
Since $(\phi'_1, \phi'_2,0), (\phi'_1, 0,  \phi'_2)\in D(\mathcal L_\lambda)$ we obtain that $\dim(\ker(\mathcal L_\lambda))=2$.

\end{enumerate}

The statement $\sigma_{\mathrm{ess}}(\mathcal{L}_\lambda)=[1,+\infty)$ is an immediate consequence of Weyl's Theorem (cf. \cite{RS4}) because of $\lim_{x\to -\infty} \cos(\phi_1(x))=1=\lim_{x\to +\infty} \cos(\phi_2(x))$. This finishes the proof.
\end{proof}

\begin{proposition}\label{anti5main}
Let $\lambda\in [-\frac{\pi}{2}, 0)$. Then $n( \mathcal{L}_\lambda )=1$.
\end{proposition}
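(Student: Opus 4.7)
The strategy mirrors the proof of Proposition \ref{5main}: combine an extension-theoretic upper bound with a variational lower bound, but with the added twist that $\Pi_{\lambda,\delta'}$ itself cannot serve as a trial function since $\phi_1(-\infty)=2\pi$ prevents $\Pi_{\lambda,\delta'}\in L^2(\mathcal Y)$. For the upper bound, I view $(\mathcal L_\lambda, D(\mathcal L_\lambda))$ as a self-adjoint extension of the closed symmetric minimal operator $(\mathcal H_0, D(\mathcal H_0))$ of Proposition \ref{2M}, which has deficiency indices $n_\pm(\mathcal H_0)=1$. Differentiating $-\phi_j''+\sin\phi_j=0$ gives $L_j\phi_j'=0$; since $\phi_j'$ has no zeros on each edge, I obtain the Sturm--Liouville factorization
\begin{equation*}
L_j\psi=-\frac{1}{\phi_j'}\frac{d}{dx}\!\Big[(\phi_j')^2\frac{d}{dx}\!\Big(\frac{\psi}{\phi_j'}\Big)\Big].
\end{equation*}
A double integration by parts for $\Lambda=(\psi_j)\in D(\mathcal H_0)$ (using $\psi_j'(0)=0$) produces $\langle\mathcal H_0\Lambda,\Lambda\rangle=A+P$ with $A\geq 0$ the integral piece and $P=\psi_1(0)^2\phi_1''(0)/\phi_1'(0)-\sum_{j=2,3}\psi_j(0)^2\phi_j''(0)/\phi_j'(0)$. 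For $\lambda\in[-\pi/2,0)$ one has $a_1\geq 0$, so $\phi_1(0)\in[\pi,2\pi)$ yields $\phi_1''(0)=\sin\phi_1(0)\leq 0$, while $\phi_2(0)=\phi_3(0)\in(0,\pi]$ yields $\phi_j''(0)\geq 0$; combined with $\phi_j'(0)<0$ throughout, both summands of $P$ are non-negative. Thus $\mathcal H_0\geq 0$, and Krein's semibounded-extension principle (Proposition \ref{semibounded}) yields $n(\mathcal L_\lambda)\leq n_+(\mathcal H_0)=1$.

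For the lower bound I work with the closed quadratic form
\begin{equation*}
q_\lambda[\mathbf u]=\sum_j\int_{E_j}\!\!\big(|u_j'|^2+\cos\phi_j\,|u_j|^2\big)\,dx+\frac{1}{\lambda}|[u]|^2,\qquad [u]:=u_2(0+)+u_3(0+)-u_1(0-),
\end{equation*}
whose form domain is $H^1(\mathcal Y)$, and the trial function $\Psi_0:=(\phi_1-2\pi,\phi_2,\phi_2)\in H^1(\mathcal Y)$ (note that $\phi_1-2\pi\to 0$ exponentially at $-\infty$). Although $\Psi_0$ lies outside $D(\mathcal L_\lambda)$ because the shift by $2\pi$ destroys the $\delta'$ jump condition, that is harmless for min-max in the form domain. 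Integration by parts on each edge, together with $\phi_j''=\sin\phi_j$ and the algebraic identity $\phi_1(0)+\phi_2(0)=2\pi$ (equivalent to $\arctan e^{a_1}+\arctan e^{-a_1}=\pi/2$), gives $[\Psi_0]=3\phi_2(0)$ and
\begin{equation*}
q_\lambda[\Psi_0]=3p\,\phi_2(0)+\frac{9\phi_2(0)^2}{\lambda}+\int_{-\infty}^0\! y(y\cos y-\sin y)\,dx+2\!\int_0^\infty\!\phi_2(\phi_2\cos\phi_2-\sin\phi_2)\,dx,
\end{equation*}
with $p:=-\phi_1'(0)=-\phi_2'(0)=2/\cosh(a_1)>0$ and $y:=2\pi-\phi_1\in(0,\phi_2(0)]\subseteq(0,\pi]$. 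The two integrals are strictly negative by the elementary inequality $x\cos x\leq\sin x$ on $[0,\pi]$, which is the same inequality exploited in Proposition \ref{5main}.

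To close the argument I need $B(\lambda):=3p\phi_2(0)+9\phi_2(0)^2/\lambda<0$ on the prescribed interval. Parametrizing by $y_0:=e^{-a_1}\in(1/\sqrt 3,\,1]$, which corresponds bijectively to $\lambda\in[-\pi/2,0)$ through \eqref{anticondi}, a direct computation gives $|\lambda|=(y_0+1/y_0)(\pi/2-3\arctan y_0)$ and $3\phi_2(0)/p=3(y_0+1/y_0)\arctan y_0$, so
\begin{equation*}
|\lambda|-\frac{3\phi_2(0)}{p}=(y_0+1/y_0)\!\left(\tfrac{\pi}{2}-6\arctan y_0\right)<0,
\end{equation*}
since $\arctan y_0>\pi/6$ whenever $y_0>1/\sqrt 3$. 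Equivalently $|\lambda|<3\phi_2(0)/p$, which is exactly $B(\lambda)<0$. Hence $q_\lambda[\Psi_0]<0$, and the min-max principle (available because $\sigma_{\mathrm{ess}}(\mathcal L_\lambda)=[1,+\infty)$ by Proposition \ref{antimain}) yields $n(\mathcal L_\lambda)\geq 1$. Combining both bounds gives $n(\mathcal L_\lambda)=1$. The chief obstacle is precisely that the natural profile-direction trial function is unavailable, forcing work at the level of the form rather than the operator; the identity $\phi_1(0)+\phi_2(0)=2\pi$ is the decisive algebraic fact that makes the negative jump penalty dominate the positive boundary contribution created by the shift.
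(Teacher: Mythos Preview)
Your upper bound via extension theory and the Sturm--Liouville factorization is identical to the paper's. For the lower bound, however, the paper takes a shorter route: it uses the derivative-based trial functions $\Lambda_1=(\phi_1',\phi_2',\phi_2')$ at $\lambda=-\pi/2$ and $\Lambda_2=(0,\phi_2',\phi_2')$ for $\lambda\in(-\pi/2,0)$. Because $L_j\phi_j'=0$, the integral contributions vanish outright after one integration by parts, leaving pure boundary expressions whose signs follow from $\phi_1''(0)=-\phi_2''(0)$ and an elementary bound on $\phi_1'(0)/\phi_1''(0)$. Your approach---shifting the anti-kink by $2\pi$ and using $\Psi_0=(\phi_1-2\pi,\phi_2,\phi_2)$---is equally valid but generates both bulk and boundary terms; the bulk integrals are handled by $x\cos x\leq\sin x$ on $[0,\pi]$ exactly as in Proposition~\ref{5main}, and the boundary competition $B(\lambda)$ then requires the extra algebraic check you carry out.

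One computational slip: your formula for $|\lambda|$ has the wrong sign. From \eqref{anticondi} with $y_0=e^{-a_1}\in(1/\sqrt3,1]$ one obtains $|\lambda|=(y_0+1/y_0)(3\arctan y_0-\pi/2)$, not $(y_0+1/y_0)(\pi/2-3\arctan y_0)$; your expression would make $|\lambda|$ negative on that range. With the correct sign the difference collapses to
\[
|\lambda|-\frac{3\phi_2(0)}{p}=(y_0+1/y_0)\Bigl(3\arctan y_0-\tfrac{\pi}{2}-3\arctan y_0\Bigr)=-\tfrac{\pi}{2}\,(y_0+1/y_0)<0,
\]
so $B(\lambda)<0$ actually holds without any restriction on $y_0$, and your argument goes through. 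The paper's derivative-based trial functions avoid this algebra entirely, which is the main economy they buy; your profile-based choice, on the other hand, mirrors Proposition~\ref{5main} more closely and reuses the same integral inequality.
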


\begin{proof}  From Proposition \ref{2M} in Appendix \S A we have that the family $(\mathcal{L}_\lambda, D(\mathcal{L}_\lambda))$ represents all the self-adjoint extensions of the closed symmetric operator, $(\mathcal H_1, D(\mathcal H_1))$, where
\begin{equation}\label{6spec}
\mathcal{H}_1=\Big (\Big(-\frac{d^2}{dx^2}+\cos(\phi_j)
\Big)\delta_{j,k} \Big ),\;\l1\leqq j, k\leqq 3,\quad D(\mathcal{H}_1)= D(\mathcal{H})
\end{equation}
and $n_{\pm}(\mathcal{H}_1)=1$. Next, we show that  $\mathcal{H}_1\geqq 0$. Indeed, by using a similar argument as in the proof of Proposition \ref{5main} we have for any $\Lambda_1=(\psi_j)\in D(\mathcal{H}_0)$,
\begin{equation}\label{anti7spec}
\begin{split}
\langle \mathcal{H}_1 \Lambda_1,\Lambda_1\rangle= A_1+\psi^2_1(0)\frac{\phi''_1(0)}{\phi'_1(0)}-\sum_{j=2}^3\psi^2_j(0)\frac{\phi''_j(0)}{\phi'_j(0)}\equiv A_1+P_1,
\end{split}
\end{equation}
where $A_1$ represents the non-negatives integral terms. Next for $\lambda\in [-\frac{\pi}{2}, +\infty)$  ($a_1\geqq 0$) we obtain immediately that $P_1\geqq 0$, so $\langle \mathcal{H}_1 \Lambda_1,\Lambda_1\rangle\geqq 0$. Due to  Proposition \ref{semibounded} (see Appendix \S A) we obtain  $n(\mathcal{L}_\lambda)\leqq 1$. 

In the sequel we show that $n(\mathcal{L}_\lambda)\geqq 1$ for $\lambda\in [-\frac{\pi}{2}, 0)$. Indeed, we consider the quadratic form $\mathcal Q$ associated to $(\mathcal{L}_\lambda, D(\mathcal{L}_\lambda))$ for  $\Lambda =(\psi_i)\in H^1(\mathcal Y)$ by
\begin{equation}\label{antiqua}
\mathcal Q(\Lambda)=\frac{1}{\lambda} \Big(\sum_{j=2}^3 \psi_j(0)-\psi_1(0)\Big)^2 +\int_{-\infty}^0(\psi'_1)^2 + \cos(\phi_1) \psi_1^2 \, dx +\sum_{j=2}^3 \int_0^{\infty}(\psi'_j)^2 + \cos(\phi_j) \psi_j^2 \, dx.
\end{equation}
Next, for $\Lambda_1=(\phi'_1, \phi'_2, \phi'_2)\in H^1(\mathcal Y)$ we obtain  from the equalities  $-\phi'''_j + \cos(\phi_j)\phi'_j=0$, $\phi'_1(0)=\phi'_2(0)=\phi'_3(0)$, and integration by parts the relation
\begin{equation}\label{2antiqua}
\mathcal Q(\Lambda_1)=\frac{1}{\lambda} [\phi'_1(0)]^2 +\phi'_1(0)[\phi''_1(0)-2\phi''_2(0)].
\end{equation}
Thus, for $\lambda=-\frac{\pi}{2}$ we have $\phi''_j(0)=0$ and therefore $\mathcal Q(\Lambda_1)<0$. 

Now, for $\lambda\in (-\frac{\pi}{2}, 0)$ we consider $\Lambda_2=(0, \phi'_2, \phi'_2)\in H^1(\mathcal Y)$. Then
\begin{equation}\label{3antiqua}
\mathcal Q(\Lambda_2)=\frac{1}{\lambda} [2\phi'_2(0)]^2 -2\phi'_2(0)\phi''_2(0)<0
\end{equation}
because of $\mathcal Q(\Lambda_2)<0$ if and only if $2\frac{\phi'_2(0)}{\phi''_2(0)}<\lambda$  if and only if $-2\frac{\phi'_1(0)}{\phi''_1(0)}<\lambda$. But, since $-2\frac{\phi'_1(0)}{\phi''_1(0)}<-2<\lambda$ we get \eqref{3antiqua}. This finishes the proof.
\end{proof}

\begin{remark}\label{antifora}
For the case $\lambda\in [0, +\infty)$ in Proposition \ref{anti5main}, it was not possible for us to show in an easy way that the quadratic form $\mathcal Q$ in \eqref{antiqua} has a negative direction. But we will see below, via analytic perturbation approach, that we still have $n( \mathcal{L}_\lambda )=1$. We note that the formula for $P_1$ in \eqref{anti7spec}  satisfies $P_1<0$ for all $\lambda\in (-\infty, -\frac{\pi}{2})$
\end{remark}

\begin{proposition}
\label{antimain4}
Let $\lambda\in [0, +\infty)$. Then $n( \mathcal{L}_\lambda )=1$.
\end{proposition}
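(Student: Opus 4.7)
The plan is to adapt the analytic perturbation scheme used in the proof of Proposition 4.4 of \cite{AnPl-delta} and indicated for Proposition \ref{7main}, combining it with the a priori upper bound $n(\mathcal{L}_\lambda)\leqq 1$ and the information of Propositions \ref{antimain} and \ref{anti5main}. Note that the proof of Proposition \ref{anti5main} actually furnishes $\mathcal{H}_1\geqq 0$ whenever $a_1\geqq 0$, i.e.\ for every $\lambda\in[-\pi/2,+\infty)$, since then the boundary term $P_1$ in \eqref{anti7spec} is non-negative; Proposition \ref{semibounded} then yields $n(\mathcal{L}_\lambda)\leqq 1$ on this whole interval. What remains is the matching lower bound $n(\mathcal{L}_\lambda)\geqq 1$ for $\lambda\in[0,+\infty)$, which, as pointed out in Remark \ref{antifora}, cannot be obtained by exhibiting a negative test direction of $\mathcal Q_\lambda$.

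First I would show that $\lambda\mapsto\mathcal{L}_\lambda$ is a real-analytic family of self-adjoint operators in the norm-resolvent sense on $(-\pi/2,+\infty)$. Because $D(\mathcal{L}_\lambda)$ varies with $\lambda$ and the quadratic form becomes singular at $\lambda=0$, this is obtained not from Kato Type~(A) directly but via the Krein-type resolvent formula for the self-adjoint extensions of the closed symmetric operator $(\mathcal{H}_1,D(\mathcal{H}_1))$ of Proposition \ref{anti5main} (with deficiency indices $(1,1)$ by Proposition \ref{2M}); the formula writes $(\mathcal{L}_\lambda-z)^{-1}$ as a rank-one modification of a fixed reference resolvent, with coefficients that are rational in $\lambda$ and remain regular at the Kirchhoff point $\lambda=0$.

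Second, fix $\lambda_0\in(-\pi/2,0)$, where Proposition \ref{anti5main} guarantees a unique simple negative eigenvalue $\xi(\lambda_0)<0$. Choose a positively-oriented Jordan curve $\Gamma\subset\rho(\mathcal{L}_{\lambda_0})$ enclosing $\xi(\lambda_0)$ and avoiding both $0$ and the half-line $[1,+\infty)$. Then the Riesz projection
\[
P(\lambda)=-\frac{1}{2\pi i}\oint_\Gamma(\mathcal{L}_\lambda-z)^{-1}\,dz
\]
depends real-analytically on $\lambda$ as long as $\Gamma\subset\rho(\mathcal{L}_\lambda)$, and $\mathrm{rank}\,P(\lambda)$ is integer-valued and locally constant. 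I would then continue $\lambda$ to the right, deforming $\Gamma$ as needed to keep it inside $\rho(\mathcal{L}_\lambda)$ while still enclosing all negative eigenvalues: by Proposition \ref{antimain} no eigenvalue can reach $0$, by $\inf\sigma_{\mathrm{ess}}(\mathcal{L}_\lambda)=1$ no negative eigenvalue can be absorbed into the essential spectrum, and by the a priori bound $n(\mathcal{L}_\lambda)\leqq 1$ no second negative eigenvalue can enter the contour. These three facts together force $\mathrm{rank}\,P(\lambda)\equiv 1$ throughout $(-\pi/2,+\infty)$; hence $n(\mathcal{L}_\lambda)\geqq 1$ on $[0,+\infty)$ and, combined with the upper bound, $n(\mathcal{L}_\lambda)=1$, as claimed.

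The main obstacle I foresee is the rigorous norm-resolvent analyticity across the Kirchhoff value $\lambda=0$ together with the uniform a priori control on how far down the negative eigenvalue can travel (so that a single finite-diameter contour $\Gamma$ suffices after deformation). Both are standard consequences of the explicit Krein-formula representation, in which the $\lambda$-dependence enters affinely through the self-adjoint boundary parameter rather than singularly, as employed in \cite{AnPl-delta,AngCav}; modulo this verification, the Riesz-projection continuation proceeds routinely.
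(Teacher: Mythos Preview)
Your proposal is correct and follows essentially the same route as the paper: both start from the known value $n(\mathcal{L}_\lambda)=1$ on $(-\pi/2,0)$ (Proposition~\ref{anti5main}), establish resolvent-continuity/analyticity of the family $\lambda\mapsto\mathcal{L}_\lambda$ across $\lambda=0$, and then propagate the Morse index to $[0,+\infty)$ by a Riesz-projection continuation argument, using $\ker(\mathcal{L}_\lambda)=\{0\}$ (Proposition~\ref{antimain}) to prevent eigenvalues from touching zero.

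The only noteworthy differences are technical. For the analyticity, the paper argues via the gap metric (writing $\widehat{\delta}(\mathcal{L}_\lambda,\mathcal{L}_0)\leqq\|W_\lambda-W_0\|$) and then invokes that $(\mathcal{L}_\lambda)_{\lambda>0}$ is a holomorphic family of type~(B); you instead propose the Krein resolvent formula for the one-parameter family of self-adjoint extensions of $\mathcal{H}_1$, which sidesteps the $1/\lambda$ singularity of the quadratic form at $\lambda=0$ cleanly. For the continuation, the paper runs the standard $\omega=\sup\{\eta: n(\mathcal{L}_\lambda)=1\text{ on }(0,\eta)\}$ contradiction argument with a \emph{locally} chosen contour at each step, so your worry about a uniform lower bound on the negative eigenvalue is unnecessary: you never need a single $\Gamma$ valid for all $\lambda$. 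Conversely, your explicit use of the a~priori bound $n(\mathcal{L}_\lambda)\leqq 1$ on $[-\pi/2,+\infty)$ (which indeed follows from $\mathcal{H}_1\geqq 0$ exactly as you note) makes the counting inside the contour slightly more transparent than in the paper.
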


\begin{proof} We use analytic perturbation theory. Initially, from subsection \ref{secAK-K} we have the real-analytic  mapping function $\lambda\in (-\infty ,+\infty)\to a_1(\lambda)$ satisfying
\begin{equation}
a_1(\lambda)=\begin{cases}
\begin{aligned}
&<0,\quad \text{for}\;\;\lambda\in (-\infty,-\frac{\pi}{2}),\\
&=0,\quad \text{for}\;\;\lambda=-\frac{\pi}{2}\\
&>0,\quad \text{for}\;\;\lambda\in (-\frac{\pi}{2}, +\infty),
\end{aligned}
\end{cases}
\end{equation}
Thus, by denoting the  stationary profiles in \eqref{antikink} as a function of $\lambda$, $\Pi_{a_1(\lambda)}=(\phi_{1, a_1(\lambda)}, \phi_{2, a_2(\lambda)}, \phi_{2, a_2(\lambda)})$  (with $a_2(\lambda)=-a_1(\lambda)$) also represents a real-analytic mapping. Moreover,  $\Pi_{a_1(\lambda)}\to \Pi_{0}$ as $\lambda\to 0$ with $\Pi_{0}=(\phi_{1, a_1(0)}, \phi_{2, a_2(0)}, \phi_{2, a_2(0)})$ in the sense that $\|\phi_{j, a_2(\lambda)}-\phi_{j, a_2(0}\|_{H^1(0, +\infty)} \to 0$ as $\lambda\to 0$ for $j=2,3$, and $
\lim_{\lambda\to 0}\|\phi_{1, a_1(\lambda)}-\phi_{1, a_1(0)}\|_{H^1(-\infty, 0)}=0$.
Thus, we obtain that  $\mathcal L_{\lambda}$ converges to $\mathcal L_{0}$ as $\lambda\to 0$ in the generalized sense. Indeed, denoting $
W_\lambda=\Big( \cos (\phi_{j, a_1(\lambda)})\delta_{j,k}\Big)$
we obtain 
\begin{equation*}
 \begin{split}
 \widehat{\delta}(\mathcal L_\lambda, \mathcal L_{0} )&=\widehat{\delta}(\mathcal L_0 + (W_\lambda-W_{0}), \mathcal L_{0})\leqq \|W_\lambda-W_{0}\|_{L^2 (\mathcal Y)}\to 0,\qquad\text{as}\;\;\lambda\to 0,
\end{split}
\end{equation*}
where $\widehat{\delta}$ is  the gap metric  (see \cite[Chapter IV]{kato}). 

Now, we denote by $N=n(\mathcal L_{0})$ the Morse-index for $\mathcal L_{0}$. Thus, from Proposition \ref{antimain} we can separate the spectrum $\sigma(\mathcal L_{0})$ of $\mathcal L_{0}$   into two parts: $\sigma_0=\{\gamma: \gamma<0\}\cap \sigma (\mathcal L_{0})$ and $\sigma_1$ by a closed curve  $\Gamma$ belongs to the resolvent set of $\mathcal L_{0}$ with $0\in \Gamma$ and   such that $\sigma_0$ belongs to the inner domain of $\Gamma$ and $\sigma_1$ to the outer domain of $\Gamma$. Moreover, $\sigma_1\subset [\theta_{0}, +\infty)$ with $\theta_{0}=\inf\{\theta: \theta\in \sigma(\mathcal L_0),\; \theta>0\}>0$ (we recall that $\sigma_{\mathrm{ess}}=[1,+\infty)$). Then, by standard perturbation theory (see Kato \cite[Theorem 3.16, Chapter IV]{kato}), we have that $\Gamma\subset \rho(\mathcal L_{\lambda})$ for $\lambda\in [-\delta_1, \delta_1]$ and $\delta_1>0$ small enough. Moreover, $\sigma(\mathcal L_{\lambda})$ is likewise separated by $\Gamma$ into two parts so that the part of $\sigma (\mathcal L_{\lambda})$ inside $\Gamma$ consists of negative eigenvalues with exactly total (algebraic) multiplicity equal to $N$. Therefore, by  Proposition \ref{anti5main}, $n(\mathcal L_{\lambda})=N=1$ for $\lambda\in [-\delta_1, \delta_1]$.

Next, we use a classical continuation argument based on the Riesz-projection for showing that $n( \mathcal{L}_\lambda )=1$ for all $\lambda\in  (0, +\infty)$. Indeed, define $\omega$ by
$$
\omega= \sup \left\{\eta: \eta>0 \; \text{such that } \; n(\mathcal{L}_\lambda)=1\;\;\text{for all}\;\;\lambda\in (0, \eta)\right\}.
$$
The analysis above implies that $\omega$ is well defined, and $\omega>0$. We claim that $\omega=+\infty$.  Suppose that $\omega<+\infty$.  Let $M=n(\mathcal{L}_\omega)$, and $\Gamma$ be  a closed curve such that $0\in  \Gamma\subset \rho(\mathcal L_\omega)$, and all the negative eigenvalues  of $\mathcal L_\omega$ belong to the inner domain of $\Gamma$. Next,  using that as a function of $\lambda$, $(\mathcal{L}_\lambda)$ is a real-analytic family of self-adjoint operators of type $(B)$ in the sense of Kato  (see \cite{kato})  we deduce that there is  $\epsilon>0$ such that for $\lambda\in [\omega-\epsilon, \omega+\epsilon]$ we have $\Gamma\subset \rho(\mathcal L_{\lambda})$,  and  the mapping $\lambda\to (\mathcal L_{\lambda}-\xi)^{-1}$ is analytic for  $\xi\in \Gamma$. Therefore, the existence of an analytic family of Riesz-projections $\lambda\to P(\lambda)$ given by
$$
P(\lambda)=-\frac{1}{2\pi i}\ointctrclockwise_{\Gamma} (\mathcal L_\lambda-\xi)^{-1}d\xi 
$$
implies that $\dim(\mathrm{range} \, P(\lambda))=\dim(\mathrm{range} \, P(\omega))=M$ for all $\lambda\in [\omega-\epsilon, \omega+\epsilon]$. Further, by definition of $\omega$, there is $\eta_0\in (\omega-\epsilon,\omega)$ and $\mathcal{L}_\lambda$ has $n(\mathcal{L}_\lambda)=1$ for all $\lambda\in (0,\eta_0)$. Therefore, $M=1$ and $\mathcal L_{\omega+\epsilon}$ has exactly one negative eigenvalue, hence $\mathcal L_{\lambda}$ has $n(\mathcal{L}_\lambda)=1$  for $\lambda\in (0, \omega+\epsilon)$, which contradicts with the definition of $\omega$. Thus, $\omega= +\infty$. This finishes the proof.
\end{proof}

\begin{proof}[Proof of Theorem \ref{0antimain}]
From Propositions \ref{antimain}, \ref{anti5main} and \ref{antimain4} we have $\ker( \mathcal{L}_\lambda)=\{\bold{0}\}$ and $n( \mathcal{L}_\lambda)=1$. Moreover, Theorem \ref{cauchy2} verifies Assumption $(S_1)$ in the linear instability criterion in subsection 3.1. Thus, from Theorem \ref{crit} follows the linear instability property of the stationary kink/anti-kink soliton profile $\Pi_{\lambda, \delta'}$. Lastly, following the same strategy for showing Theorem \ref{well2} we obtain the local well-posedness theory for \eqref{akmodel} in $H^1(\mathcal Y)\times L^2(\mathcal Y)$ and so the analysis of Henry {\it et al.} in \cite{HPW82} (see also Angulo {\it{et al.}} \cite{ALN08,AngNat16}) we obtain the nonlinear instability property of $\Pi_{\lambda, \delta'}$. This finishes the proof.
\end{proof}


\section*{Acknowledgements}
J. Angulo  was supported in part by CNPq/Brazil Grant  and FAPERJ/Brazil program PRONEX-E - 26/010.001258/2016.  The work of R. G. Plaza was partially supported by DGAPA-UNAM, program PAPIIT, grant IN-100318.

\appendix
\section{Extension Theory}
\label{secAppET}


For the sake of completeness, in this appendix we develop the extension theory of symmetric operators suitable for our needs. For further information on the subject the reader is referred to the monographs by Naimark \cite{Nai67,Nai68}. The following classical result, known as  \emph{the von-Neumann decomposition theorem}, can be found in \cite{RS2}, p. 138.
 
\begin{theorem}\label{d5} 
Let $A$ be a closed, symmetric operator, then
\begin{equation}\label{d6}
D(A^*)=D(A)\oplus\mathcal N_{-i} \oplus\mathcal N_{+i}.
\end{equation}
with $\mathcal N_{\pm i}= \ker (A^*\mp iI)$. Therefore, for $u\in D(A^*)$ and $u=x+y+z\in D(A)\oplus\mathcal N_{-i} \oplus\mathcal N_{+i}$,
\begin{equation}\label{d6a}
A^*u=Ax+(-i)y+iz.
\end{equation}
\end{theorem}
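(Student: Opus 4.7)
The plan is to view $D(A^*)$ as a Hilbert space under the graph inner product
\[
\langle u,v\rangle_{A^*}:=\langle u,v\rangle+\langle A^*u,A^*v\rangle,
\]
in which each of the three subspaces $D(A)$ (closed because $A$ is closed), $\mathcal N_{-i}=\ker(A^*+iI)$ and $\mathcal N_{+i}=\ker(A^*-iI)$ (both closed because $A^*$ is automatically closed) is a closed subspace. The direct-sum decomposition \eqref{d6} is then the orthogonal decomposition with respect to $\langle\cdot,\cdot\rangle_{A^*}$, so the argument splits naturally into (i) pairwise orthogonality of the three summands and (ii) surjectivity of their algebraic sum onto $D(A^*)$.

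Step (i) reduces to short algebraic computations based on the eigenvalue relations $A^*y=-iy$ for $y\in\mathcal N_{-i}$ and $A^*z=iz$ for $z\in\mathcal N_{+i}$, together with the symmetry $A\subset A^*$. For instance, given $x\in D(A)$ and $y\in\mathcal N_{-i}$, one uses $\langle Ax,y\rangle=\langle x,A^*y\rangle=\langle x,-iy\rangle$ to evaluate $\langle A^*x,A^*y\rangle=\langle Ax,-iy\rangle$ and obtain $\langle A^*x,A^*y\rangle=-\langle x,y\rangle$, so that $\langle x,y\rangle_{A^*}=0$. The remaining two orthogonality relations, between $D(A)$ and $\mathcal N_{+i}$ and between $\mathcal N_{-i}$ and $\mathcal N_{+i}$, follow by strictly analogous manipulations.

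Step (ii) is the substantive part of the proof. The key tool is the classical von Neumann identity
\[
H=\mathrm{Ran}(A+iI)\oplus\mathcal N_{+i},
\]
valid because a closed symmetric operator has closed range on $A\pm iI$ with $\mathcal N_{+i}=\mathrm{Ran}(A+iI)^{\perp}$. Given $u\in D(A^*)$, I would apply this splitting to $(A^*+iI)u\in H$ to obtain $x\in D(A)$ and $\zeta\in\mathcal N_{+i}$ with $(A^*+iI)u=(A+iI)x+\zeta$; using $A\subset A^*$ this reads $(A^*+iI)(u-x)=\zeta$. Since $A^*\zeta=i\zeta$ gives $(A^*+iI)\zeta=2i\zeta$, the natural choices $z:=-\tfrac{i}{2}\zeta\in\mathcal N_{+i}$ and $y:=u-x-z$ satisfy $(A^*+iI)y=\zeta-\zeta=0$, so $y\in\mathcal N_{-i}$. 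This yields the desired decomposition $u=x+y+z$, whose uniqueness is immediate from step (i). Formula \eqref{d6a} then follows directly from $A\subset A^*$ and the kernel relations defining $\mathcal N_{\pm i}$.

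The main obstacle is precisely the surjectivity in step (ii): one must convert the Hilbert-space orthogonal splitting of $H$ into a splitting of $D(A^*)$ by exploiting the formal factorisation $A^{*2}+I=(A^*-iI)(A^*+iI)$ on the appropriate subspaces, taking care that each intermediate vector actually lies in $D(A^*)$ before applying $A^*$ a second time. All other steps are routine verifications of orthogonality and of the linear action of $A^*$ on the decomposition.
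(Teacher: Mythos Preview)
The paper does not give its own proof of this statement: it simply records the von Neumann decomposition as a classical result and refers the reader to Reed and Simon \cite{RS2}, p.~138. Your argument is correct and is essentially the standard textbook proof (the one found in Reed--Simon): equip $D(A^*)$ with the graph inner product, check pairwise orthogonality of the three summands, and obtain surjectivity from the orthogonal splitting $H=\mathrm{Ran}(A+iI)\oplus\mathcal N_{+i}$ applied to $(A^*+iI)u$. There is nothing to compare here beyond noting that your write-up supplies the details the paper chose to outsource.
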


\begin{remark} The direct sum in (\ref{d6}) is not necessarily orthogonal.
\end{remark}

The following propositions provide a strategy for estimating the Morse-index of the self-adjoint extensions and can be found in Naimark \cite{Nai68} (Theorem 16, p. 44) and Reed and Simon, vol. 2, \cite{RS2} (see Theorem X.2, p. 140).

\begin{proposition}\label{semibounded}
Let $A$  be a densely defined lower semi-bounded symmetric operator (that is, $A\geq mI$)  with finite deficiency indices, $n_{\pm}(A)=k<\infty$,  in the Hilbert space $\mathcal{H}$, and let $\widehat{A}$ be a self-adjoint extension of $A$.  Then the spectrum of $\widehat{A}$  in $(-\infty, m)$ is discrete and  consists of, at most, $k$  eigenvalues counting multiplicities.
\end{proposition}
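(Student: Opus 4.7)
My plan is to combine the von Neumann parameterization of self-adjoint extensions (Theorem \ref{d5}) with a dimension-counting argument for the multiplicity bound, and to handle discreteness of the spectrum below $m$ via comparison with the Friedrichs extension.

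First, I would invoke the existence of the Friedrichs extension $A_F$ of $A$, which preserves the lower bound so that $A_F \geqq m I$ and hence $\sigma(A_F)\subset [m,+\infty)$. Any other self-adjoint extension $\widehat{A}$ is related to $A_F$ through the von Neumann parameterization by unitaries between the deficiency subspaces $\mathcal{N}_{\pm i}$, each of dimension $k$; consequently the resolvents $(\widehat{A}-z)^{-1}$ and $(A_F-z)^{-1}$ differ by an operator of rank at most $2k$ (Krein's resolvent formula). Weyl's theorem on the stability of the essential spectrum then gives $\sigma_{\mathrm{ess}}(\widehat{A})=\sigma_{\mathrm{ess}}(A_F)\subset [m,+\infty)$, so $\sigma(\widehat{A})\cap(-\infty,m)$ consists solely of isolated eigenvalues of finite multiplicity; i.e., it is discrete.

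Second, for the multiplicity bound I would use the fact that, by Theorem \ref{d5} and the Cayley-transform description of extensions, $D(\widehat{A})$ is obtained from $D(A)$ by adjoining the graph of a unitary $U:\mathcal{N}_{+i}\to \mathcal{N}_{-i}$, hence $\dim(D(\widehat{A})/D(A))=k$. Arguing by contradiction, assume $\widehat{A}$ admits eigenvalues $\lambda_1\leqq\cdots\leqq \lambda_{k+1}<m$ (counted with multiplicity), and let $V\subset D(\widehat{A})$ be the span of corresponding eigenvectors, $\dim V=k+1$. The linear map $V\to D(\widehat{A})/D(A)$ has range of dimension at most $k$, so its kernel $V\cap D(A)$ has dimension at least $1$. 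Pick a nonzero $v\in V\cap D(A)$; since $\widehat{A}$ extends $A$, one has $\widehat{A}v=Av$, and expanding $v=\sum c_j e_j$ in the orthonormal eigenbasis yields
$$
\langle Av,v\rangle \;=\; \langle \widehat{A}v,v\rangle \;=\; \sum_j \lambda_j\,|c_j|^2 \;<\; m\,\|v\|^2,
$$
the strict inequality coming from $\lambda_j<m$ and $v\neq 0$. This contradicts $A\geqq mI$, so at most $k$ eigenvalues (counting multiplicity) can lie in $(-\infty,m)$.

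The central technical ingredient, and the one requiring most care, is the codimension identity $\dim(D(\widehat{A})/D(A))=k$, which is the bridge between Theorem \ref{d5} and the counting argument; it follows from the observation that, under the decomposition \eqref{d6}, $D(\widehat{A})$ is $D(A)$ plus the $k$-dimensional graph $\{y+Uy:y\in\mathcal{N}_{+i}\}$. Once this is granted, the rest of the proof is a clean combination of the Friedrichs/Weyl discreteness step and the linear-algebraic argument above; an alternative that bypasses Friedrichs altogether would be to apply the same dimension count to the spectral subspace of any interval $(-\infty,\mu]$ with $\mu<m$, obtaining uniform bounds that force both the discreteness and the multiplicity claim simultaneously.
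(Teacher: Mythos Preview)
The paper does not supply its own proof of this proposition; it merely cites Naimark \cite{Nai68} (Theorem 16, p.~44) and Reed--Simon \cite{RS2}. Your argument is correct and is essentially the standard one found in those references: discreteness below $m$ via the Friedrichs extension and Weyl's theorem, and the multiplicity bound via the codimension identity $\dim(D(\widehat{A})/D(A))=k$ combined with the quadratic-form contradiction $\langle Av,v\rangle<m\|v\|^2$ for any nonzero $v$ in the intersection of $D(A)$ with the span of the low eigenvectors. One cosmetic remark: Krein's resolvent formula actually yields a resolvent difference of rank at most $k$, not $2k$, since both $\widehat{A}$ and $A_F$ are extensions of the same operator with deficiency index $k$; your weaker bound is of course still enough for Weyl's theorem. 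Your closing observation---that one can bypass Friedrichs altogether by applying the same dimension count to the spectral subspace $E_{\widehat{A}}((-\infty,\mu])$ for each $\mu<m$---is also valid and is in fact closer to how Naimark organizes the argument.
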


\begin{proposition}\label{11}
	Let $A$ be a densely defined, closed, symmetric operator in some Hilbert space $H$ with deficiency indices equal  $n_{\pm}(A)=1$. All self-adjoint extensions $A_\theta$ of $A$ may be parametrized by a real parameter $\theta\in [0,2\pi)$ where
	\begin{equation*}
	\begin{split}
	D(A_\theta)&=\{x+c\phi_+ + \zeta e^{i\theta}\phi_{-}: x\in D(A), \zeta \in \mathbb C\},\\
	A_\theta (x + \zeta \phi_+ + \zeta e^{i\theta}\phi_{-})&= Ax+i \zeta \phi_+ - i \zeta e^{i\theta}\phi_{-},
	\end{split}
	\end{equation*}
	with $A^*\phi_{\pm}=\pm i \phi_{\pm}$, and $\|\phi_+\|=\|\phi_-\|$.
\end{proposition}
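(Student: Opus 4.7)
The plan is to derive Proposition \ref{11} as the one-dimensional specialization of the general von Neumann parametrization of self-adjoint extensions, using Theorem \ref{d5} as the main input. The key observation is that a closed symmetric extension $B$ of $A$ satisfies $A \subset B \subset A^*$, so by the decomposition $D(A^*) = D(A) \oplus \mathcal{N}_{-i} \oplus \mathcal{N}_{+i}$, characterizing extensions reduces to selecting subspaces of $\mathcal{N}_{-i} \oplus \mathcal{N}_{+i}$ on which the restriction of $A^*$ remains symmetric.

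First, I would fix normalized basis vectors $\phi_+ \in \mathcal{N}_{+i}$ and $\phi_- \in \mathcal{N}_{-i}$ with $\|\phi_+\| = \|\phi_-\|$, which is possible since both spaces are one-dimensional. The next step is the boundary form computation: for elements
\[
y = x + \alpha_+ \phi_+ + \alpha_- \phi_-, \qquad y' = x' + \beta_+ \phi_+ + \beta_- \phi_-
\]
in $D(A^*)$, I would use \eqref{d6a} together with the symmetry of $A$ on $D(A)$ and the relations $A^*\phi_\pm = \pm i \phi_\pm$ to deduce
\[
\langle A^* y, y' \rangle - \langle y, A^* y' \rangle = 2i \bigl( \alpha_+ \overline{\beta_+} - \alpha_- \overline{\beta_-} \bigr)\, \|\phi_+\|^2,
\]
where I used $\|\phi_+\| = \|\phi_-\|$. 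Thus any symmetric extension of $A$ must be defined on a subspace on which this skew-Hermitian form vanishes.

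Next, I would analyze which subspaces of $\mathcal{N}_{-i} \oplus \mathcal{N}_{+i}$ annihilate this form. Since $\dim(\mathcal{N}_{-i} \oplus \mathcal{N}_{+i}) = 2$ and the form has signature $(1,1)$, its maximal isotropic subspaces are one-dimensional and of the form $\mathbb{C}(\phi_+ + e^{i\theta}\phi_-)$ for some $\theta \in [0, 2\pi)$, because the vanishing of $|\alpha_+|^2 - |\alpha_-|^2$ on a one-dimensional subspace forces $|\alpha_-| = |\alpha_+|$, and scaling lets us write $\alpha_- = e^{i\theta}\alpha_+$. Defining
\[
D(A_\theta) := \bigl\{ x + \zeta \phi_+ + \zeta e^{i\theta}\phi_- : x \in D(A),\; \zeta \in \mathbb{C}\bigr\},
\]
the restriction of $A^*$ to $D(A_\theta)$ gives the announced formula via \eqref{d6a}. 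This produces a symmetric extension.

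To upgrade symmetry to self-adjointness I would use a dimension count: since $D(A_\theta)$ has codimension one in $D(A^*)$, any proper symmetric extension of $A_\theta$ would coincide with $A^*$, which is not symmetric (for $A$ is not self-adjoint because $n_\pm(A) = 1 \neq 0$). Hence $A_\theta$ is maximal symmetric, and since its deficiency indices must both drop by one, they become zero, so $A_\theta$ is self-adjoint. Conversely, any self-adjoint extension $B$ has $D(A) \subsetneq D(B) \subset D(A^*)$ with $D(B)$ of codimension one in $D(A^*)$, and the boundary form calculation above forces $D(B) = D(A_\theta)$ for some $\theta$. This step is essentially the specialization of von Neumann's bijection between self-adjoint extensions and partial isometries $\mathcal{N}_{+i} \to \mathcal{N}_{-i}$ to the one-dimensional case. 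No single step is a real obstacle; the main care point is the bookkeeping with the norms of $\phi_{\pm}$ and the verification that the parametrization is exhaustive, which reduces to the observation that the unit circle in $\mathbb{C}$ parametrizes all unitaries between two one-dimensional Hilbert spaces of equal norm.
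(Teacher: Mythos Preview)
Your argument is correct and is the standard von Neumann argument (boundary-form computation followed by identification of maximal isotropic subspaces with unitaries $\mathcal{N}_{+i}\to\mathcal{N}_{-i}$). The paper itself does not prove Proposition~\ref{11}; it simply states it and cites Reed--Simon~\cite{RS2}, Theorem~X.2, so there is nothing to compare beyond observing that your write-up is exactly the textbook derivation specialized to deficiency indices $(1,1)$.
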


Next Proposition can be found in Naimark \cite{Nai68} (see Theorem 9, p. 38).

\begin{proposition}
\label{esse}
All self-adjoint extensions of a closed, symmetric operator
which has equal and finite deficiency indices have one and the
same continuous spectrum.
\end{proposition}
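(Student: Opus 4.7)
The plan is to deduce the equality of continuous spectra from the fact that any two self-adjoint extensions of $A$ have resolvents that differ by a finite-rank operator, combined with Weyl's classical invariance theorem. Let $A_1$ and $A_2$ denote two self-adjoint extensions of the closed symmetric operator $A$, which has deficiency indices $n_\pm(A) = k < \infty$.

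First, I would use the von Neumann decomposition theorem (Theorem \ref{d5}) to observe that every self-adjoint extension $\widehat{A}$ satisfies $D(A) \subset D(\widehat{A}) \subset D(A^*) = D(A) \oplus \mathcal{N}_{-i} \oplus \mathcal{N}_{+i}$, so $D(\widehat{A})/D(A)$ is finite-dimensional (dimension exactly $k$, by the structure of self-adjoint extensions, e.g.\ Proposition \ref{11} in the case $k=1$ and its analogue for general $k$). In particular, both $A_1$ and $A_2$ are finite-dimensional extensions of the common core $A$.

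Next, I would compute the resolvent difference. Fix any $z \in \rho(A_1) \cap \rho(A_2)$ with $\operatorname{Im} z \neq 0$, let $f \in \mathcal{H}$, and set $u_j := (A_j - z)^{-1} f$ for $j=1,2$. Then $(A_j - z) u_j = f$ and, since $A_j \subset A^*$, we have $(A^* - z)(u_1 - u_2) = 0$. Hence
\begin{equation*}
\operatorname{Ran}\bigl((A_1 - z)^{-1} - (A_2 - z)^{-1}\bigr) \subset \ker(A^* - z).
\end{equation*}
Because $\dim \ker(A^* - z) = n_-(A) = k < \infty$ for $\operatorname{Im} z > 0$ (and symmetrically for $\operatorname{Im} z < 0$), the resolvent difference is a finite-rank, and in particular compact, operator on $\mathcal{H}$.

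Finally, I would invoke Weyl's theorem: if two self-adjoint operators have resolvents differing by a compact operator, their essential spectra coincide. Combining this with the spectral mapping theorem applied to $(A_j - z)^{-1}$, we conclude $\sigma_{\mathrm{ess}}(A_1) = \sigma_{\mathrm{ess}}(A_2)$. Since in Naimark's terminology (the source cited for this Proposition) the continuous spectrum of a self-adjoint operator is precisely its essential spectrum (the complement in $\sigma$ of the isolated eigenvalues of finite multiplicity), the statement follows. The main technical point is the rank-$k$ identity for $\operatorname{Ran}\bigl((A_1-z)^{-1}-(A_2-z)^{-1}\bigr)$; once that is established, Weyl's theorem does the rest, and I expect no substantial obstacles beyond carefully invoking von Neumann's parametrization of self-adjoint extensions.
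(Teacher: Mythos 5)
Your proof is correct: the finite\hyp{}rank resolvent difference (its range lies in $\ker(A^*-z)$, which is $k$-dimensional for non-real $z$) combined with Weyl's theorem on invariance of the essential spectrum under compact resolvent perturbations is precisely the classical argument, and your explicit identification of Naimark's ``continuous spectrum'' with the essential spectrum is exactly the reading under which the statement is true (and is how the paper uses it, e.g.\ for $\sigma_{\mathrm{ess}}(\mathcal{H}_\lambda)$ and $\sigma_{\mathrm{ess}}(\mathcal{L}_\lambda)$). The paper gives no proof of its own---it cites Naimark (Theorem 9, p.~38), whose proof is essentially the one you reconstructed---so the only remark to make is the harmless index swap: with the paper's convention $n_\pm(A)=\dim\ker(A^*\mp iI)$, one has $\dim\ker(A^*-z)=n_+(A)$ for $\mathrm{Im}\, z>0$, not $n_-(A)$, which is immaterial here since $n_+(A)=n_-(A)=k$.
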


The following result was used in the proof of Proposition  \ref{5main}.

\begin{proposition}\label{2M}
Let $\mathcal{Y}$ be a $\mathcal{Y}$-junction of type I. Consider the following closed symmetric operator, $(\mathcal{H}, D(\mathcal{H}))$, densely defined on $L^2(\mathcal Y)$ by
\begin{equation}\label{3M}
\begin{split}
&\mathcal{H}=\Big (\Big(-c_j^2\frac{d^2}{dx^2}\Big)\delta_{j,k} \Big ),\;\;1\leqq j, k\leqq 3,\\
&D(\mathcal{H})= \Big \{(v_j)_{j=1}^3\in H^2(\mathcal{Y}):
c_1v_1'(0-)=c_2v'_2(0+)=c_3v_3'(0+)=0,\;\; \sum_{j=2}^3 c_jv_j(0+)-c_1v_1(0-)=0 \Big \},
\end{split}
\end{equation}
with $c_j>0$, $1\leqq j\leqq 3$, and $\delta_{j,k}$ being the Kronecker symbol. Then, the deficiency indices are $n_{\pm}( \mathcal H)=1$. Therefore, we have that all the self-adjoint extensions of $( \mathcal H, D( \mathcal H))$ are given by the one-parameter family $(\mathcal H_\lambda, D(\mathcal H_\lambda))$,  $\lambda\in \mathbb R$, with $\mathcal H_\lambda\equiv \mathcal H$ and $D(\mathcal H_\lambda)$  defined by
 \begin{equation}\label{deridelta}
D(\mathcal H_\lambda)= \Big\{(u_j)_{j=1}^3\in H^2(\mathcal{Y}):
c_1u'_1(0-)=c_2u'_2(0+)=c_3u'_3(0+),\;\; \sum_{j=2}^3 c_ju_j(0+)-c_1u_1(0-)=\lambda c_1u'_1(0-) \Big\}.
\end{equation}
\end{proposition}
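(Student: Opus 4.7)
The plan is to run the standard extension-theory recipe in four steps: compute the adjoint $\mathcal H^*$, compute the deficiency indices, read off the boundary symplectic form, and classify its Lagrangian subspaces. The abstract tools are provided by Propositions~\ref{d5} and \ref{11} above.

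First I would compute $\mathcal H^*$ by the usual double integration by parts. For $v=(v_j)\in D(\mathcal H)$ and a candidate $u\in L^2(\mathcal Y)$, the contributions at $\pm\infty$ die (because $v\in H^2(\mathcal Y)$), and the conditions $c_1v_1'(0-)=c_2v_2'(0+)=c_3v_3'(0+)=0$ together with $c_1v_1(0-)=c_2v_2(0+)+c_3v_3(0+)$ collapse the vertex boundary term to
\begin{equation*}
c_2v_2(0+)\bigl[c_1\overline{u_1'(0-)}-c_2\overline{u_2'(0+)}\bigr] + c_3v_3(0+)\bigl[c_1\overline{u_1'(0-)}-c_3\overline{u_3'(0+)}\bigr].
\end{equation*}
Since $v_2(0+)$ and $v_3(0+)$ can be prescribed independently in $\mathbb C$ (adjusting $v_1(0-)$ through the jump equation), the map $v\mapsto\langle \mathcal Hv,u\rangle$ is $L^2$-bounded iff $c_1u_1'(0-)=c_2u_2'(0+)=c_3u_3'(0+)$. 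Combined with the standard interior regularity argument (test first against $v$'s supported inside each edge to force $u\in H^2(\mathcal Y)$), this yields
\begin{equation*}
D(\mathcal H^*)=\bigl\{u\in H^2(\mathcal Y):c_1u_1'(0-)=c_2u_2'(0+)=c_3u_3'(0+)\bigr\},
\end{equation*}
with $\mathcal H^*$ acting as the same differential expression.

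Next I would solve $\mathcal H^*\phi=\pm i\phi$ to obtain the deficiency indices. On each edge the ODE $\phi_j''=\mp ic_j^{-2}\phi_j$ admits a unique (up to scalar) $L^2$-branch of the form $A_1 e^{\omega_\pm x/c_1}$ on $(-\infty,0)$ and $A_j e^{-\omega_\pm x/c_j}$ on $(0,\infty)$, with $\omega_\pm=e^{\mp i\pi/4}$. Imposing the flux-continuity condition of $D(\mathcal H^*)$ forces $A_1=-A_2=-A_3$, so each deficiency subspace is one-complex-dimensional, i.e.\ $n_\pm(\mathcal H)=1$. A completely analogous IBP performed on arbitrary $u,v\in D(\mathcal H^*)$ yields the concise boundary form
\begin{equation*}
\langle \mathcal H^*u,v\rangle-\langle u,\mathcal H^*v\rangle = U_u\,\overline{[v]}-[u]\,\overline{U_v},
\end{equation*}
where $U_u:=c_1u_1'(0-)$ (the common derivative value) and $[u]:=c_2u_2(0+)+c_3u_3(0+)-c_1u_1(0-)$. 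This is the symplectic form on the two-complex-dimensional quotient $D(\mathcal H^*)/D(\mathcal H)$, which is faithfully coordinatized by the pair $(U_u,[u])$.

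Finally I would classify the self-adjoint extensions via Proposition~\ref{11}: they correspond bijectively to maximally isotropic (here one-complex-dimensional) subspaces of this quotient with respect to the form above. A linear constraint of the form $[u]=\lambda U_u$ with $\lambda\in\mathbb C$ makes the boundary form equal to $(\bar\lambda-\lambda)|U_u|^2$, which vanishes identically iff $\lambda\in\mathbb R$. This produces exactly the one-parameter family $D(\mathcal H_\lambda)$, $\lambda\in\mathbb R$, asserted in the proposition (the remaining Lagrangian $\{U_u=0\}$ corresponds to the ``decoupled'' $\lambda=\infty$ extension, which is not covered by the parametrization stated here). The main obstacle is the reverse inclusion in the identification of $D(\mathcal H^*)$: showing that every $u\in D(\mathcal H^*)$ actually lies in $H^2(\mathcal Y)$ with the flux-continuity condition, rather than merely in $L^2$. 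The standard remedy is to test first against $v$'s with compact support inside each open edge (which by elliptic regularity places $u_j$ in $H^2(E_j)$) and then to let the test functions reach the vertex to extract the three-way equality of fluxes; once this is in place, the remaining steps are purely algebraic.
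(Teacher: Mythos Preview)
Your proof is correct and follows essentially the same scheme as the paper: identify $D(\mathcal H^*)$ by integration by parts, exhibit the one-dimensional deficiency subspaces by solving the ODE on each edge under the flux-continuity constraint, and then classify the self-adjoint extensions. The only real difference is in the final step: the paper applies Proposition~\ref{11} literally, writing down the deficiency vectors $\Phi_\pm$, parametrizing the extensions by $\theta\in[0,2\pi)\setminus\{\pi/2\}$, and then computing the explicit formula $\lambda=-\bigl(\sum_j c_j\bigr)(1+e^{i\theta})\bigl(e^{-i\pi/4}+e^{i(\theta+\pi/4)}\bigr)^{-1}$; you instead read off the reduced boundary form $U_u\overline{[v]}-[u]\overline{U_v}$ on $D(\mathcal H^*)/D(\mathcal H)$ and classify its Lagrangian lines directly. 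Your route is a bit shorter and avoids the explicit $\theta\mapsto\lambda$ map, while the paper's route makes the correspondence with the abstract von~Neumann picture transparent; both cover exactly the same family and both identify the missing ``$\lambda=\infty$'' extension (your $U_u=0$, the paper's excluded $\theta=\pi/2$).
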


\begin{proof}
We show initially  that the adjoint operator $( \mathcal H^*, D( \mathcal H^*))$ of $( \mathcal H, D( \mathcal H))$   is given by 
  \begin{equation}\label{2F*3}
 \mathcal H^*=\mathcal H, \quad D( \mathcal H^*)=\{(u_j)_{j=1}^ 3\in H^2(\mathcal Y) : c_1u'_1(0-)=c_2u'_2(0+)=c_3u'_3(0+)\}.
 \end{equation}
Indeed, formally for $\bold u=(u_1, u_2, u_3), \bold v=(v_1, v_2, v_3)\in H^2(\mathcal Y)$ we have 
\begin{equation}\label{2relaself}
\begin{split}
 \langle\mathcal H\bold u, \bold v\rangle
 &=c_1^2u_1(0-)v_1' (0-)- c_1^2u'_1(0-)v_1 (0-) + \sum_{j=2}^ 3c_j^2[u'_{j}(0+)v_j(0+) -u_{j}(0+)v'_j(0+)]+\langle \bold u,\mathcal H \bold v\rangle\\
 &\equiv R+ \langle \bold u,\mathcal H \bold v\rangle.
 \end{split}
 \end{equation}
Hence, for $\bold u, \bold v \in D(\mathcal H)$ we obtain $R=0$ in \eqref{2relaself} and so the symmetric property of $\mathcal H$. Next, let us denote $D_0^*:=\{(u_j)_{j=1}^ 3\in H^2(\mathcal G) : c_1u'_1(0-)=c_2u'_2(0+)=c_3u'_3(0+)\}$. We shall show that $D_0^*=D( \mathcal H^*)$. Indeed, from \eqref{2relaself}  we obtain for
 $\bold v\in D_0^*$ and $\bold u\in D( \mathcal H)$ that  $R=0$  and so $\bold v\in D (\mathcal H^*)$ with $\mathcal H^*\bold v=  \mathcal H\bold v$. Let us show the inclusion $D_0^*\supseteq
D(\mathcal H^*)$. Take $\bold u=(u_1, u_2, u_3)\in D(\mathcal H)$, then
for any $\bold v=(v_1, v_2, v_3)\in D(\mathcal H^*)$ we have from \eqref{2relaself}
\begin{align}\label{relaself3}
 \langle\mathcal H \bold u, \bold v\rangle = 
R+\langle \bold u,\mathcal H^* \bold v\rangle=\langle \bold u,\mathcal H \bold v\rangle.
 \end{align}
Thus, we arrive for any $\bold u\in D(\mathcal H)$ at the equality 
\begin{equation}\label{2adjoint}
 \sum_{j=2}^ 3c_j^2v'_{j}(0+)u_j(0+) - c_1^2v_1' (0-)u_1(0-)=0.
 \end{equation} 
Next, let us consider $\bold u=(u_1, u_2, 0)\in D(\mathcal H)$. Then $c_1 u_1'(0-)=c_2 u_2'(0+)=0$ and from equations \eqref{3M} and \eqref{2adjoint}, we obtain $
[c_2v'_2(0+)-c_1v'_1(0-)]c_1 u_1(0-)=0$.
Thus, by choosing $u_1(0-)\neq 0$ follows $c_1v'_1(0-)=c_2v'_2(0+)$. Similarly, we have $c_1v'_1(0-)=c_3v'_3(0+)$. Therefore,  $\bold v\in D_0^*$. This shows the relations in \eqref{2F*3}.

Now, from \eqref{2F*3} we obtain that the deficiency indices  for $( \mathcal H, D( \mathcal H))$ are $n_{\pm}( \mathcal H)=1$. Indeed,  $\ker(\mathcal H^*\pm iI)=[\Phi_{\pm}]$ with $\Phi_{\pm}$ defined by
\begin{equation}
\Phi_{\pm}=
\left(\underset{x<0}{e^{\frac{ ik_{\mp}}{c_1}x}}, \underset{x>0}{-e^{\frac{- ik_{\mp}}{c_2}x}}, \underset{x>0}{-e^{\frac{ -ik_{\mp}}{c_3}x}}\right),
\end{equation}
$k^2_{\mp}=\mp i$, $\IM(k_{-})<0$ and $\IM(k_{+})<0$. Moreover, $\|\Psi_{-}\|_{L^ 2(\mathcal Y)}=\|\Psi_{+}\|_{L^ 2(\mathcal Y)}$.

  Next, from extension theory for symmetric operators we have that every self-adjoint extension $(\widehat{\mathcal H}, D(\widehat{\mathcal H}))$ of $(\mathcal H, D(\mathcal H))$ is characterized by $D(\widehat{\mathcal H})\subset 
D(\mathcal H^*)$, and   $\bold u=(u_1,u_2,u_3)\in D(\widehat{\mathcal H})$ if and only if 
$
\sum_{j=2}^3 c_ju_j(0+)-c_1u_1(0-)=\lambda c_1u'_{1}(0-),
$
where, for $\theta\in [0, 2\pi)-\{\pi/2\}$,
$$
\lambda= -\Big(\sum_{j=1}^3 c_j \Big)\frac{1+e^{i\theta}}{e^{-i\frac{\pi}{4}}+e^{i(\theta+\frac{\pi}{4})}} \in \mathbb R.
$$
Thus, the set of self-adjoint extensions
$(\widehat{\mathcal H}, D(\widehat{\mathcal H}))$ of the symmetric operator $(\mathcal H, D(\mathcal H))$ can be seen as one-parametrized  family $(\mathcal H_\lambda, D(\mathcal H_\lambda))$ defined by $\mathcal H_\lambda=\mathcal H$ and $D(\mathcal H_\lambda)$ by \eqref{deridelta}. This finishes the proof.
 \end{proof}

\section{Morse index calculations}
\label{secMorse}
The following result gives a precise value for the Morse-index of the operator $ \mathcal{L}_{\lambda} $ in \eqref{deriva1} associated to the kink/anti-kink profile $(\phi_j)$ obtained in subsection \S \ref{secAK-K}, when we consider the  domain $\mathcal C_1\cap D (\mathcal{L}_{\lambda})$, $\mathcal C_1=\{(u_j)_{j=1}^3\in L^2(\mathcal Y): u_2=u_3\}$. We establish this result in order to possible future study related to the stability properties of kink/anti-kink profiles. We conjecture that these profiles are in fact unstable (see Remark \ref{extension2} and \cite{AngCav}).

\begin{proposition}\label{Appanti}
Let $\lambda\in (-\infty, -\frac{\pi}{2})$ and consider  $\mathcal B_2=\mathcal C_1 \cap D (\mathcal{L}_{\lambda})$.  Then $\mathcal{L}_{\lambda}  : \mathcal B_2\to \mathcal C_1$ is well defined and $n( \mathcal{L}_{\lambda} |_{\mathcal B_2} )=2$.
\end{proposition}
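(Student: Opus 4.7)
The plan is to follow the template of Proposition~\ref{antimain4}: establish a lower bound $n \geq 2$ by exhibiting a two-dimensional subspace of $H^1(\mathcal{Y})\cap\mathcal{C}_1$ on which the quadratic form is negative definite, and then prove the matching upper bound $n \leq 2$ by analytic perturbation from $\lambda_0 = -\frac{\pi}{2}$ together with a Riesz-projection continuation. The well-definedness of $\mathcal{L}_\lambda : \mathcal{B}_2 \to \mathcal{C}_1$ is immediate from $a_2 = a_3$ (and hence $\phi_2 \equiv \phi_3$ on $(0,\infty)$) in the kink/anti-kink construction, which makes $\mathcal{C}_1$ invariant for the diagonal operator $\mathcal{L}_\lambda$ as well as compatible with the jump condition in \eqref{I3trav23}.

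For the lower bound, I would test the quadratic form $\mathcal{Q}_\lambda$ from \eqref{antiqua} on the 2-dimensional subspace $V = \Span\{\Lambda_1,\Lambda_2\}\subset H^1(\mathcal{Y})\cap\mathcal{C}_1$ with $\Lambda_1 = (\phi_1',\phi_2',\phi_2')$ and $\Lambda_2 = (0,\phi_2',\phi_2')$. Integration by parts combined with the identity $-\phi_j''' + \cos(\phi_j)\phi_j' = 0$, together with the signs $\phi_1'(0) = \phi_2'(0) = v < 0$, $\phi_1''(0) > 0$, $\phi_2''(0) < 0$, which all hold throughout $\lambda \in (-\infty,-\frac{\pi}{2})$ (equivalently $a_1 < 0 < a_2$), should yield
\[
\mathcal{Q}_\lambda(\alpha\Lambda_1 + \beta\Lambda_2) = \tfrac{v^2}{\lambda}(\alpha + 2\beta)^2 + \alpha^2\, v\, \phi_1''(0) - 2(\alpha+\beta)^2\, v\, \phi_2''(0).
\]
Each summand is non-positive, and the three vanishing conditions $\alpha + 2\beta = 0$, $\alpha = 0$, $\alpha + \beta = 0$ together force $\alpha = \beta = 0$; the min-max principle then gives $n(\mathcal{L}_\lambda|_{\mathcal{B}_2}) \geq 2$.

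For the upper bound, I would anchor the analysis at $\lambda_0 = -\frac{\pi}{2}$. The restriction of the kernel description from Proposition~\ref{antimain} to $\mathcal{C}_1$ shows that $\ker(\mathcal{L}_{\lambda_0})\cap\mathcal{C}_1$ is the one-dimensional space spanned by $(2\phi_1',\phi_2',\phi_2') = 2\Lambda_1 - \Lambda_2$; combining $\mathcal{Q}_{\lambda_0}(\Lambda_1) < 0$ with the bound $n(\mathcal{L}_{\lambda_0}) = 1$ from Proposition~\ref{anti5main} yields $n(\mathcal{L}_{\lambda_0}|_{\mathcal{B}_2}) = 1$. Since $(\mathcal{L}_\lambda)$ is a real-analytic family of type (B) in the sense of Kato \cite{kato}, for $\lambda$ slightly less than $\lambda_0$ the negative eigenvalue stays negative and the zero eigenvalue perturbs into some $\mu(\lambda)$; the lower bound just obtained forces $\mu(\lambda) < 0$, so $n(\mathcal{L}_\lambda|_{\mathcal{B}_2}) = 2$ in a left-neighbourhood of $\lambda_0$. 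The Riesz-projection continuation argument identical to the one deployed at the end of Proposition~\ref{antimain4}, applied with a fixed contour $\Gamma$ enclosing the negative discrete spectrum and separating it from the essential spectrum $[1,+\infty)$, then propagates $n(\mathcal{L}_\lambda|_{\mathcal{B}_2}) = 2$ throughout $(-\infty,-\frac{\pi}{2})$. The main obstacle is precisely this global step: one must rule out both the creation of new negative eigenvalues and their escape back across zero as $\lambda$ ranges over an unbounded interval. Both issues are controlled because $\sigma_{\mathrm{ess}}(\mathcal{L}_\lambda) = [1,+\infty)$ is pinned (Proposition~\ref{antimain}), and any crossing through zero would require a nontrivial kernel on $\mathcal{B}_2$, which Proposition~\ref{antimain} excludes for every $\lambda \neq -\frac{\pi}{2}$.
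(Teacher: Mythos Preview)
Your proof is correct, and in one respect it is cleaner than the paper's. The paper's argument also anchors at $\lambda_0=-\tfrac{\pi}{2}$, identifies $\ker(\mathcal L_{\lambda_0}|_{\mathcal B_2})=\Span\{(2\phi_1',\phi_2',\phi_2')\}$ and $n(\mathcal L_{\lambda_0}|_{\mathcal B_2})=1$, and then uses the Riesz-projection continuation to extend over $(-\infty,-\tfrac{\pi}{2})$. The genuine difference is in how the direction of the perturbed zero eigenvalue is determined: the paper applies Kato--Rellich to obtain an analytic eigenvalue branch $\Omega(\lambda)$ with $\Omega(\lambda_0)=0$, expands $\langle \mathcal L_\lambda\Pi(\lambda),\Phi'_{\lambda_0}\rangle$ to first order in $\lambda-\lambda_0$, and computes $\Omega'(\lambda_0)=\eta/\|\Phi'_{\lambda_0}\|^2>0$ through an explicit integral and the value $a_1'(\lambda_0)=\tfrac{1}{3}$. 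You bypass this derivative computation entirely by producing, for every $\lambda<-\tfrac{\pi}{2}$, a two-dimensional subspace of the form domain on which $\mathcal Q_\lambda$ is negative definite. Your formula for $\mathcal Q_\lambda(\alpha\Lambda_1+\beta\Lambda_2)$ and the sign analysis (using $v<0$, $\phi_1''(0)>0$, $\phi_2''(0)<0$ when $a_1<0<a_2$) are correct, and the three vanishing conditions are indeed only simultaneously satisfied at $\alpha=\beta=0$. This yields the lower bound $n\ge 2$ globally, which then forces the perturbed zero eigenvalue to be negative without any Taylor expansion. The paper's route gives finer local information (the actual slope of the eigenvalue branch at $\lambda_0$), while yours is more elementary and delivers the global lower bound in one stroke; either combines with the same continuation argument (trivial kernel for $\lambda\ne-\tfrac{\pi}{2}$, $\sigma_{\mathrm{ess}}=[1,\infty)$) to pin $n=2$ on the whole interval.
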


\begin{proof} The proof is based on analytic perturbation theory.  By Proposition \ref{antimain}  we  have  for $\Phi'_{\lambda_0}=(2\phi'_1, \phi'_2,  \phi'_2)\in \mathcal C_1$, $\phi_i\equiv  \phi_{i, \lambda_0}$, that $\Span\{\Phi'_{\lambda_0}\}=\ker(\mathcal{L}_{\lambda_0} |_{\mathcal B_2} )$ ($\lambda_0\equiv -\frac{\pi}{2}$). Moreover, by the proof of Proposition \ref{anti5main} we have for $\Lambda_1=(\phi'_1, \phi'_2,  \phi'_2)\in \mathcal C_1\cap H^1(\mathcal Y)$ that the quadratic form $\mathcal Q$    in \eqref{antiqua} satisfies $\mathcal Q(\Lambda_1)<0$ thus $n(\mathcal{L}_{\lambda_0} |_{\mathcal B_2} )=1$.  Let us denote by $\chi_{_{\lambda_0}}$ the unique negative eigenvalue for $\mathcal{L}_{\lambda_0}|_{\mathcal B_2}$. We note  in this point  of the analysis that by using the classical perturbation theory and the convergence  $\mathcal{L}_{\lambda} \to \mathcal{L}_{\lambda_0}$ as $\lambda \to \lambda_0$  in the generalized sense  (Kato \cite{kato}) we obtain that given a closed curve $\Gamma$ such that $\sigma_0=\{\chi_{_{\lambda_0}}, 0\}$ belongs to the inner domain of $\Gamma$, then we can only conclude that $ n(\mathcal{L}_{\lambda} |_{\mathcal B_2} )\geqq 1$ (by Proposition \ref{antimain}) for $\lambda\approx \lambda_0$. So   we will need to determine exactly how the eigenvalue zero will move, either to right or to left . In this form, we divide our analysis into several steps:

\begin{enumerate}
 \item[i)]  The mapping $\lambda\in (-\infty, \infty)\to \Psi_\lambda= (\phi_{i, a_i(\lambda)})$ is real-analytic and we have the convergence $ \Psi_\lambda- \Psi_{\lambda_0}\to 0$, as $\lambda\to \lambda_0$, in $H^1(\mathcal Y)$. Then, it follows that $\mathcal{L}_{\lambda}$ converges to $\mathcal{L}_{\lambda_0}$ as $\lambda \to \lambda_0$  in the generalized sense (see proof of Proposition \ref{antimain4}). Moreover, the   family  $\{\mathcal{L}_\lambda\}_{\lambda\in I}$ represents a  real-analytic family of self-adjoint operators of type (B) in the sense of Kato (see \cite{kato}) on $\mathcal C_1$. Therefore, from  Theorem IV-3.16 from Kato \cite{kato}  we obtain   $\Gamma\subset \rho(\mathcal{L}_{\lambda})$ for $|\lambda-\lambda_0|$ sufficiently small, and $\sigma (\mathcal{L}_{\lambda})$ is likewise separated by $\Gamma$ into two parts, such   that the part of $\sigma (\mathcal{L}_{\lambda})$ inside $\Gamma$ consists of a finite number of eigenvalues with total multiplicity (algebraic) equal to two. Then, it follows from  Kato-Rellich's theorem (see \cite{RS1}) the existence of two analytic functions, $\Omega$ and $\Pi$, defined in a neighborhood of $\lambda_0$ with $\Omega: (\lambda_0-\delta, \lambda_0+\delta)\to \mathbb R$ and $\Pi: (\lambda_0-\delta, \lambda_0+\delta)\to \mathcal C_1$ such that
$\Omega(\lambda_0)=0$ and $\Pi(\lambda_0)=\Phi'_{\lambda_0}$. For all $\lambda\in (\lambda_0-\delta, \lambda_0+\delta)$, $\Omega(\lambda)$ is the  simple isolated second eigenvalue of $\mathcal{L}_{\lambda}$, and $\Pi(\lambda)$ is the associated eigenvector for $\Omega(\lambda)$. Moreover, $\delta$ can be chosen small enough to ensure that, for  $\lambda\in (\lambda_0-\delta, \lambda_0+\delta)$,  the spectrum of $\mathcal{L}_{\lambda}$ in $\mathcal C_1$ is positive, except for, at most, the first two eigenvalues.

\item[ii)] Using Taylor's theorem, we obtain for sufficiently   small $\delta$  the following expansions:  \begin{equation}\label{antitaylor1}
\Omega(\lambda)=\beta (\lambda-\lambda_0)+ O(|\lambda-\lambda_0|^2), \;\; \Pi(\lambda)=\Phi'_{\lambda_0} + \Pi'(\lambda_0) (\lambda-\lambda_0) + \bold{O}(|\lambda-\lambda_0|^2),
\end{equation}
where $\beta= \Omega'(\lambda_0)$. Moreover, by analyticity we also obtain the expansion
 \begin{equation}\label{antitaylor2}
\Psi_\lambda-\Psi_{\lambda_0}=(\lambda-\lambda_0) \partial_\lambda \Psi_\lambda |_{\lambda=\lambda_0} + \bold{O}(|\lambda-\lambda_0|^2).
\end{equation}

Next, we determine the sign of $\beta$.  Thus, we compute $\langle \mathcal{L}_{\lambda} \Pi(\lambda), \Phi'_{\lambda_0} \rangle$ in two different ways. On one hand, using $\mathcal{L}_{\lambda} \Pi(\lambda)= \Omega(\lambda)\Pi(\lambda)$, \eqref{antitaylor1} leads to
 \begin{equation}\label{antitaylor3}
\langle \mathcal{L}_{\lambda} \Pi(\lambda), \Phi'_{\lambda_0}
 \rangle= \beta (\lambda-\lambda_0)\|\Phi'_{\lambda_0}\|^2 + O(|\lambda-\lambda_0|^2).
\end{equation}
On the other hand, since $\Phi'_{\lambda_0}\in D(\mathcal{L}_{\lambda} )\cap \mathcal C_1$ for all $\lambda\neq \lambda_0$ and $\mathcal{L}_{\lambda} $ is self-adjoint, we get $\langle \mathcal{L}_{\lambda} \Pi(\lambda), \Phi'_{\lambda_0}  \rangle=\langle \Pi(\lambda), \mathcal{L}_{\lambda} \Phi'_{\lambda_0} \rangle$. Now, using the notation $\phi_{i,\lambda}=\phi_{i,a_i(\lambda)}$, we have from \eqref{deriva1} the relation $
\mathcal{L}_{\lambda} \Phi'_{\lambda_0}  = \mathcal{L}_{\lambda_0} \Phi'_{\lambda_0}  +\mathcal A \Phi'_{\lambda_0} =\mathcal A \Phi'_{\lambda_0}$
where $\mathcal A$ is the diagonal-matrix $
\mathcal A=\Big((\cos(\phi_{i,\lambda})-\cos(\phi_{i,\lambda_0}))\delta_{i,k}\Big)$, $ 1\leqq i,k\leqq 3$.
Next, for $G_i(\lambda)=\cos(\phi_{i,\lambda})$ we have the expansion
$$
G_i(\lambda)-G_i(\lambda_0)=-\sin(\phi_{i,\lambda_0})\Big( \partial_\lambda \phi_{i,\lambda}|_{_{\lambda=\lambda_0}}\Big)(\lambda-\lambda_0)+ O(|\lambda-\lambda_0|^2).
$$
Then $
\mathcal A=\Big((-\sin(\phi_{i,\lambda_0})\partial_\lambda \phi_{i,\lambda}|_{_{\lambda=\lambda_0}}(\lambda-\lambda_0))\delta_{i,k}\Big) + \bold{O}(|\lambda-\lambda_0|^2)$, $1\leqq i,k\leqq 3$.
Thus,
\begin{equation}\label{antitaylor5}
\begin{split}
\langle \mathcal{L}_{\lambda} \Pi(\lambda), \Phi'_{\lambda_0}  \rangle&=\langle \Pi(\lambda), \mathcal{L}_{\lambda} \Phi'_{\lambda_0}  \rangle=\langle \Phi'_{\lambda_0}  + \Pi'(\lambda_0) (\lambda-\lambda_0) + \bold{O}(|\lambda-\lambda_0|^2), \mathcal A \Phi'_{\lambda_0} \rangle
\\
&=\Big\langle \Phi'_{\lambda_0}, \Big((-\sin(\phi_{i,\lambda_0})\partial_\lambda \phi_{i,\lambda}|_{_{\lambda=\lambda_0}}(\lambda-\lambda_0))\delta_{i,k}\Big)\Phi'_{\lambda_0}  \Big \rangle+ O(|\lambda-\lambda_0|^2)\\
&\equiv (\lambda-\lambda_0) \eta + O(|\lambda-\lambda_0|^2),
\end{split}
\end{equation}
with $
\eta\equiv 4a_1'(\lambda_0) \int_{-\infty}^0 (\phi'_{1,\lambda_0})^3\sin(\phi_{1,\lambda_0})\, dx+2a_1'(\lambda_0) \int_0^{\infty} (\phi'_{2,\lambda_0})^3\sin(-\phi_{2,\lambda_0}) \, dx$.
Next, we obtain the sign of $\eta$. Indeed, from the qualitative properties of the anti-kink profile $\phi_{i,\lambda_0}$ we get immediately that the two integrals above are positive. Next, from the relation in \eqref{anticondi}  we obtain that $a_1'(\lambda_0)=\frac13$ (indeed we always have $a_1'(\lambda)>0$). Then $\eta>0$. Next, from \eqref{antitaylor3} and \eqref{antitaylor5} there follows $
\beta=\frac{\eta}{\|\Phi'_{\lambda_0}\|^2} + O(|\lambda-\lambda_0|)$.
Therefore, from \eqref{antitaylor1} there exists $\delta_0>0$ sufficiently small such that $\Omega(\lambda)>0$ for any $\lambda\in (\lambda_0, \lambda_0+\delta_0)$, and $\Omega(\lambda)<0$ for  any $\lambda\in (\lambda_0-\delta_0, \lambda_0)$.  Thus, in the space  $ \mathcal C_1$, the Morse index $n(\mathcal{L}_\lambda |_{\mathcal B_2})=1$ for $\lambda>\lambda_0$ and $\lambda\approx \lambda_0$ (equality that is used in Proposition \ref{anti5main}), and $n(\mathcal{L}_\lambda |_{\mathcal B_2})=2$ for $\lambda<\lambda_0$ and $\lambda\approx \lambda_0$.

\item[iii)] Next, since   $\ker(\mathcal{L}_\lambda)=\{\bold 0\}$ for $\lambda\neq -\frac{\pi}{2}$, we obtain via a continuation argument based on the Riesz-projection operator (see proof of Proposition \ref{antimain4}) that for any $\lambda\in (-\infty, -\frac{\pi}{2})$ follows $n(\mathcal{L}_\lambda |_{\mathcal B_2})=2$ . This finishes the proof.
\end{enumerate}
\end{proof}

\def\cprime{$'\!\!$} \def\cprimel{$'\!$}

%

\end{document}